\providecommand{\R}{}
\providecommand{\Z}{}
\providecommand{\N}{}
\renewcommand{\R}{\mathbb{R}}
\renewcommand{\Z}{\mathbb{Z}}
\renewcommand{\N}{{\mathbb N}}
\renewcommand{\P}{\mathbb{P}}
\newcommand{\D}{\mathbb{D}}
\def \W {\mathsf{W}}
\newcommand{\E}[1]{{\mathbf E}\left[#1\right]}
\newcommand{\p}{\mathbf P}
\newcommand{\I}[1]{{\mathbbm 1}_{#1}}
\newcommand{\set}[1]{\left( #1 \right)}
\newcommand{\Cprob}[2]{\mathbf{P}\set{\left. #1 \; \right| \; #2}} 
\newcommand{\probC}[2]{\mathbf{P}\set{#1 \; \left|  \; #2 \right. }}
\newcommand\cA{\mathcal A}
\newcommand\cD{\mathcal D}
\newcommand\cS{{\mathcal S}}
\newcommand\cT{{\mathcal T}}
\newcommand{\optionaldesc}[2]{%
  \phantomsection
  #1\protected@edef\@currentlabel{#1}\label{#2}%
}
\newcommand{\eqdist}{\ensuremath{\stackrel{\mathrm{d}}{=}}}
\providecommand{\eps}{}
\renewcommand{\eps}{\varepsilon}
\providecommand{\ora}[1]{}
\renewcommand{\ora}[1]{\overrightarrow{#1}}
\DeclareRobustCommand{\SkipTocEntry}[5]{} 
\newcommand{\sequence}[1]{\mathrm{#1}}
\newcommand{\tree}{\mathrm{t}}
\newcommand{\height}{\mathsf{H}}
\newcommand{\rtree}{\mathrm{T}}
\newcommand{\rseq}{\mathrm{V}}
\newcommand{\dseq}{\mathrm{d}}
\renewcommand{\H}{\mathsf{Height}}
\newcommand{\Tn}{\rtree_{n}}
\newtheorem{thm}{Theorem}
\newtheorem{lem}[thm]{Lemma}
\newtheorem{prop}[thm]{Proposition}
\newtheorem{cor}[thm]{Corollary}
\newtheorem{rk}[thm]{Remark}
\numberwithin{equation}{section}
\numberwithin{thm}{section}
\begin{document}

\title{Critical trees are neither too short nor too fat} 
\author{Louigi Addario-Berry}
\address{Department of Mathematics and Statistics, McGill University, Montr\'eal, Canada}
\email{louigi.addario@mcgill.ca}

\author{Serte Donderwinkel}
\address{Bernoulli Institute, University of Groningen, Nijenborgh~9, 9747 AG Groningen, Netherlands and CogniGron (Groningen Cognitive Systems and Materials Center), University of Groningen, Nijenborgh~4, 9747 AG Groningen, Netherlands}
\email{s.a.donderwinkel@rug.nl}

\author{Igor Kortchemski}
\address{CNRS, CMAP, École polytechnique, Institut Polytechnique de Paris, 91120 Palaiseau, France \& ETH Zürich, Rämistrasse 101, 8092 Zurich Switzerland}
\email{igor.kortchemski@math.cnrs.fr}
\subjclass[2010]{05C05, 60C05, 60J80.} 

\begin{abstract} 
We establish lower tail bounds for the height, and upper tail bounds for the width, of critical size-conditioned Bienaym\'e trees. Our bounds are optimal at this level of generality. We also obtain precise asymptotics for offspring distributions within the domain of attraction of a Cauchy distribution, under a local regularity assumption. Finally, we pose some questions on the possible asymptotic behaviours of the height and width of critical size-conditioned Bienaym\'e trees.
\end{abstract} 

\maketitle


\section{\bf Introduction}

For a rooted tree $\tree$, the {\em size}  $|\tree|$ of $\tree$ is its number of vertices, the {\em height} $\H(\tree)$ is the greatest distance of any vertex of $\tree$ from the root, and the {\em width} $\mathsf{Width}(\tree)$ is the greatest number of nodes in any single generation of $\tree$. 

Given a probability measure $\mu$ on $\Z_{+}$ (hereafter referred to as  an \emph{offspring distribution}), write $\rtree=\rtree(\mu)$ for a $\mu$-Bienaym\'e tree, i.e., the family tree of a $\mu$-distributed branching process.\footnote{Also known as {\em Galton--Watson} or {\em Bienaym\'e--Galton--Watson} trees. Here we adopt the nomenclature proposed in \cite{ABHK21}.} 
For $n \in \N$ with $\p(|\rtree|=n)>0$, write $\rtree_n=\rtree_n(\mu)$ for a random tree distributed as $\rtree$ conditioned to have size $n$. We prove the following lower bound on $\H(\rtree_n)$, which holds for all  offspring distributions $\mu$ that are {\em critical}, meaning that they have mean~$1$.
\begin{thm}\label{thm:height_critical0}
Let $\mu$ be any critical offspring distribution. Then
\[
\mathsf{Height}( \rtree_{n}) = \omega_{\p}(\ln n)
\qquad \text{and} \qquad  
\mathsf{Width}(\rtree_n)=o_{\p}(n).
\]  
\end{thm}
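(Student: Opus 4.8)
The plan is to establish the two halves separately; both rely on the Łukasiewicz encoding, on the Otter--Dwass identity, and on the standard fact that $\p(|\rtree|=n)$ decays at most polynomially in $n$ along the span of $\mu$, which I will use as a black box. Write $\xi$ for a generic $\mu$-distributed variable and $\phi(s)=\E{s^\xi}$ for its probability generating function, so that $\phi(1)=\phi'(1)=1$ by criticality. Let $S$ be the random walk with $S_0=0$ and i.i.d.\ increments distributed as $\xi-1$; it has mean $0$ and increments bounded below by $-1$. A $\mu$-Bienaymé tree $\rtree$ is encoded by this walk in two useful ways: in depth-first order $(S_0,\dots,S_{|\rtree|})$ run until the first hitting time of $-1$ is the Łukasiewicz path; and in breadth-first order one obtains the same walk (the out-degrees are i.i.d.\ $\mu$ in either exploration order), with the key feature that the size of generation $k+1$ equals $1$ plus the value of the breadth-first walk at the instant the first $k$ generations have been explored. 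In particular $\mathsf{Width}(\rtree)\le 1+\max_{0\le i\le |\rtree|}S_i$.

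\emph{Width.} Fix $\eps>0$. By the last display and Otter--Dwass it suffices to show $\p\bigl(\max_{0\le i\le n}S_i\ge \eps n,\ |\rtree|=n\bigr)=o\bigl(\p(|\rtree|=n)\bigr)$. On that event let $\sigma$ be the first time $S$ reaches level $\eps n$; after time $\sigma$ the walk must descend to $-1$ by time $n$, hence decrease by at least $\eps n$ within at most $n$ steps. Since $1-\xi\le 1$ has mean $0$ and $\E{e^{t(1-\xi)}}\le e^{t}<\infty$ for $t\ge0$, Cramér's inequality gives $\p(\text{such a segment decreases by at least }\eps n)\le e^{-n\Lambda^\ast(\eps)}$ uniformly over segments of length $m\le n$, where $\Lambda^\ast$ is the Cramér rate function of $1-\xi$; summing over the at most $n$ values of $\sigma$ and using the strong Markov property yields $\p\bigl(\max_i S_i\ge\eps n,\ |\rtree|=n\bigr)\le n\,e^{-n\Lambda^\ast(\eps)}$. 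Criticality enters exactly here: because $\E{1-\xi}=0$ and $\xi\not\equiv1$, the rate function is strictly positive away from $0$, so $\Lambda^\ast(\eps)>0$ and the bound is exponentially small, hence $o(\p(|\rtree|=n))$. Therefore $\mathsf{Width}(\rtree_n)=o_{\p}(n)$.

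\emph{Height.} Here I would argue analytically. Set $g_h(s)=\E{s^{|\rtree|}\one_{\mathsf{Height}(\rtree)\le h}}$; splitting on the root's children gives $g_0(s)=\mu(0)s$ and $g_h(s)=s\,\phi\bigl(g_{h-1}(s)\bigr)$, and $q_h:=g_h(1)=\p(\mathsf{Height}(\rtree)\le h)=\phi^{\circ h}(\mu(0))\uparrow1$. One may assume $\mu$ has infinite variance, as otherwise $\mathsf{Height}(\rtree_n)$ is of order $\sqrt n$ by classical results; then $\phi$ has radius of convergence $1$ and, by Pringsheim, is singular there. Consequently $g_h$ has radius of convergence $R_h:=g_{h-1}^{-1}(1)\in(1,R_{h-1})$, with $R_h\downarrow1$, and $g_h(R_h)=R_h\phi(1)=R_h<\infty$, so by monotone convergence $[s^n]g_h(s)\le R_h^{\,1-n}$. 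Two elementary estimates control $R_h$ from below. First, convexity of $g_{h-1}$ on $[1,R_h]$ gives $1-q_{h-1}=g_{h-1}(R_h)-g_{h-1}(1)\le g_{h-1}'(R_h)\,(R_h-1)$; iterating $g_j'(s)=\phi(g_{j-1}(s))+s\,\phi'(g_{j-1}(s))\,g_{j-1}'(s)$ at $s=R_h$, using $\phi(g_{j-1}(R_h))\le1$ and $R_h\,\phi'(g_{j-1}(R_h))\le1$ for $h$ large (the latter since $R_h=1/\phi(g_{h-2}(R_h))$ and $\phi'(w)\le\phi(w)$ for $w$ near $1$, as $\phi-\phi'$ vanishes at $1$ with left-derivative $\phi'(1)-\phi''(1)=-\infty$), yields $g_{h-1}'(R_h)\le h$, so $R_h-1\ge(1-q_{h-1})/h$. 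Second, $1-q_h\ge(1-q_{h-1})\,\phi'(q_{h-1})$ with $\phi'(q_j)\to\phi'(1)=1$, so Cesàro summation gives $1-q_h=e^{-o(h)}$. Taking $h=C\ln n$ we obtain $R_{C\ln n}-1\ge e^{-o(\ln n)}=n^{-o(1)}$, hence $[s^n]g_{C\ln n}(s)\le R_{C\ln n}^{\,1-n}\le e^{-n^{1-o(1)}}$, and therefore
\[
\p\bigl(\mathsf{Height}(\rtree_n)\le C\ln n\bigr)=\frac{[s^n]g_{C\ln n}(s)}{\p(|\rtree|=n)}\le\frac{e^{-n^{1-o(1)}}}{\p(|\rtree|=n)}\longrightarrow0
\]
by the polynomial lower bound. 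As $C>0$ is arbitrary, $\mathsf{Height}(\rtree_n)=\omega_{\p}(\ln n)$.

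The step I expect to be the genuine obstacle is the quantitative control of $R_h\downarrow1$, equivalently of how slowly $q_h\uparrow1$: one needs $1-q_{C\ln n}$ to dominate every stretched exponential $e^{-n^{\delta}}$. This holds with much room to spare ($1-q_{C\ln n}=n^{-o(1)}$), but extracting it uniformly over all critical $\mu$ — in particular for the heaviest-tailed offspring laws lying outside any domain of attraction — requires care, as does the bookkeeping behind $g_h'(R_h)=O(h)$ and the separate treatment of the finite-variance case (where the width bound likewise follows from the known Brownian-type scaling). The other point needing care, though routine, is the black-boxed estimate $\p(|\rtree|=n)\ge n^{-O(1)}$, which follows from the Otter--Dwass identity $\p(|\rtree|=n)=\tfrac1n\,\p(S_n=-1)$ together with a local estimate for mean-zero integer-valued random walks.
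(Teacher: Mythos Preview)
Your width argument is correct and close to the paper's in spirit: both bound $\p(\max_{m\le n}|S_m|\ge\eps n,\ S_n=-1)$ exponentially and then divide by $\p(|\rtree|=n)$. The paper truncates the increments at a large level $K$ so that the truncated walk has bounded steps and strictly negative mean, then uses a Chernoff bound on that walk; you instead apply Cram\'er directly to the decrements $1-\xi$, which works because these are bounded above (so the one-sided Laplace transform is finite) and not a.s.\ zero. Either way the numerator is genuinely $e^{-\delta n}$, so even the weak estimate $\p(|\rtree|=n)=e^{-o(n)}$ --- which is all the paper proves (its Lemma~4.1) and all that is available in general --- suffices.

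Your height argument takes a genuinely different, analytic route (radii of convergence of the iterates $g_h=s\,\phi\circ g_{h-1}$) rather than the paper's combinatorial one (Foata--Fuchs bijection plus a longest-run argument). Most of your steps are sound: the identification $R_h=g_{h-1}^{-1}(1)$, the convexity bound $R_h-1\ge(1-q_{h-1})/g_{h-1}'(R_h)$, the iteration $g_{h-1}'(R_h)\le h$ for large $h$ via $\phi'(w)\le\phi(w)$ near $1$ in the infinite-variance case, and $1-q_h=e^{-o(h)}$ by Ces\`aro. The gap is the black box. A polynomial lower bound $\p(|\rtree|=n)\ge n^{-O(1)}$ is \emph{not} a routine fact for arbitrary critical $\mu$; it holds under regularity (finite variance, or domain of attraction of a stable law, via a local limit theorem), but Theorem~1.1 makes no such assumption, and the paper only establishes $\p(S_n=-1)=e^{-o(n)}$. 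With only $e^{-o(n)}$ in the denominator your numerator bound $e^{-n^{1-o(1)}}$ does not close: the $o(1)$ in the exponent (from $1-q_{C\ln n}=e^{-o(\ln n)}$) and the $o(n)$ in the denominator are governed by different features of $\mu$ and are not comparable; for instance if $1-q_h\approx e^{-h/\log h}$ then $n^{1-o(1)}=n^{1-C/\log\log n}$, which can be $o$ of an $o(n)$ term such as $n/(\log n)^2$. The paper circumvents this by showing instead that the complement of the event ``all but $\beta n$ of the total degree sits at vertices of bounded degree'' is exponentially unlikely for the unconditioned walk (so $e^{-o(n)}$ suffices to transfer to $\rtree_n$), and then using the Foata--Fuchs line-breaking construction to exhibit a path of length $\omega(\ln n)$ as a longest run of non-repeated, bounded-degree entries in the encoding sequence.
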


The notation $X_n = \omega_{\p}(Y_n)$ means that $X_n/Y_n \to \infty$ in probability as $n \to \infty$; in the theorem (and implicitly in the sequel) the limit is along $n$ for which $\p(|\rtree|=n)>0$.

The proof relies on the powerful Foata-Fuchs bijection, which establishes a connection between trees and sequences. This bijection can be reformulated as a line-breaking construction  and we essentially show that for a critical offspring distribution, the longest path used to build the tree  has length  $\omega_{\p}(\ln n)$.

The bounds in Theorem~\ref{thm:height_critical0} are optimal for the class of critical offspring distributions; this is the content of our second theorem. 

\begin{thm}\label{thm:critical_short_trees0}
Let $f(n)\to \infty$. There exists a critical offspring distribution $\mu$ such that
\[
\limsup_{n\to \infty} \P\left(\mathsf{Height}(\rtree_{n})<f(n)\ln n\right)=1
\qquad \text{and} \qquad  
\limsup_{n\to \infty} \P\left(\mathsf{Width}(\rtree_{n})> n/f(n)\right)=1.
\]
\end{thm}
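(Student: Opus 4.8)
The plan is to construct, for a given $f(n)\to\infty$, a single critical offspring distribution $\mu$ that simultaneously has very heavy tails — so heavy that a macroscopic proportion of the mass of $\rtree_n$ sits in a single vertex of very high degree — and yet has enough mass on small values that the tree looks "short and fat" along a subsequence. The heaviness of the tail is what forces both the low height and the large width, and the two conclusions should follow from essentially the same structural fact: with probability bounded away from $0$ (along a well-chosen subsequence $n_k$), the conditioned tree $\rtree_{n_k}$ contains a vertex whose number of children is of order $n_k$.

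First I would recall (or reprove via a one-big-jump / local-limit estimate) the standard fact that if $\mu$ has a sufficiently heavy tail — concretely, if $\mu(\{j\})$ is regularly varying with index in $(1,2)$, or more crudely if $\mu(\{j\}) \asymp j^{-1-\alpha}L(j)$ for suitable slowly varying $L$ and $\alpha\in(0,1)$ — then $\rtree_n$ has a unique vertex of maximal degree, and this maximal degree is $\Theta_{\p}(n)$ (indeed it converges after rescaling to a nondegenerate law, and the rest of the tree splits into i.i.d.\ subtrees of size $O_{\p}(n^{1/\alpha})$ or so). On the event that such a giant-degree vertex $v$ exists with $\deg^+(v)\ge (1-\eta)n$, the remaining $\le \eta n$ vertices are distributed among the forest hanging off $v$'s children together with the small top part of the tree; since height and width are both controlled by this small remainder plus a bounded contribution from $v$ itself, one gets $\mathsf{Height}(\rtree_n)\le \mathsf{Height}(\text{remainder})+1$ and $\mathsf{Width}(\rtree_n)\ge \deg^+(v)\ge (1-\eta)n$. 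The width bound $\mathsf{Width}(\rtree_n)>n/f(n)$ is then immediate for large $n$ on this event, which has probability bounded below; taking $\limsup$ gives the second assertion, provided one arranges the probability of the giant-degree event to tend to $1$ along a subsequence (which requires tuning, see below).

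For the height statement the remainder has $o(n)$ vertices but could still a priori be tall, so this is where the main work lies and where I expect the chief obstacle to be. The idea is to engineer $\mu$ so that along a sparse subsequence $n_k$ the tail is "locally even heavier," forcing the remainder itself to again be dominated by a single huge-degree vertex, and to iterate: with probability bounded below, $\rtree_{n_k}$ decomposes into a bounded-depth spine of $O(1)$ giant-degree vertices carrying all but $n_k/f(n_k)$ of the mass, with the leftover genuinely tiny. This is reminiscent of the constructions producing condensation with an arbitrarily slowly growing "true" height, and it is exactly the kind of delicate tail-tuning along a subsequence that makes the $\limsup=1$ (rather than merely $>0$) statement nontrivial. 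Concretely I would choose a rapidly growing sequence $n_k$ and define $\mu(\{j\})$ piecewise so that on the scale relevant to $n_k$ the distribution mimics an index-$1$ (Cauchy-type) or even slowly-varying-dominated tail — using the asymptotics for the Cauchy domain of attraction referred to earlier in the paper — while remaining critical overall by a compensating choice of mass near $j=0$ and $j=1$; a local limit theorem on the spacing between the $n_k$ then upgrades "probability bounded below" to "probability $\to 1$ along $n_k$."

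Finally I would assemble the two pieces: the same $\mu$ and the same subsequence $n_k$ serve both bounds, since the event "$\rtree_{n_k}$ is a bounded-depth union of giant-degree vertices plus a remainder of size $< n_k/f(n_k)$" simultaneously forces $\mathsf{Height}(\rtree_{n_k}) < f(n_k)\ln n_k$ (for $k$ large, as the bounded spine length plus the height of a sub-$n_k/f(n_k)$-sized critical tree is $O(1)+O_{\p}(\sqrt{n_k/f(n_k)})$, or with heavy tails even smaller, which is $o(f(n_k)\ln n_k)$) and $\mathsf{Width}(\rtree_{n_k}) > n_k/f(n_k)$. Passing to $\limsup$ along $n_k$ yields both displayed equalities. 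The one genuinely technical lemma to be proved carefully is the subsequential condensation statement with probability tending to $1$ and with quantitative control of the remainder; everything else is bookkeeping with known one-big-jump and local-limit estimates for heavy-tailed Bienaym\'e trees.
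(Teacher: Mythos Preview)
Your proposal has the right high-level structure --- reduce both conclusions to the existence of a vertex of very large degree along a subsequence --- and the width bound does follow trivially from $\mathsf{Width}(\rtree_n)\ge \Delta(\rtree_n)$, exactly as in the paper. But there are two genuine gaps.

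First, your proposed construction of $\mu$ does not work. You suggest taking $\mu_j\asymp j^{-1-\alpha}L(j)$ with $\alpha\in(0,1)$; but then $\sum_j j\mu_j=\infty$, so no such critical distribution exists. If instead you take $\alpha\in(1,2)$ (the $\alpha$-stable domain of attraction), the maximal degree in $\rtree_n$ is of order $n^{1/\alpha}=o(n)$, not $\Theta(n)$; and in the Cauchy regime treated earlier in the paper the maximal degree is $\sim b_n$, which is again $o(n)$. The condensation picture with a single vertex of degree $(1-\eta)n$ that you invoke is a \emph{subcritical} heavy-tailed phenomenon and does not arise for any critical $\mu$ with regularly varying tails. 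The paper's construction is accordingly much more delicate: it builds $\mu$ supported on a sparse sequence $(n_i)$ with carefully chosen weights $p_i$, and proves via a bespoke local large-deviation estimate (uniform over an auxiliary parameter, since the mass at $1$ is only fixed at the end) that $\P(\Delta(\rtree_{n_k^*})<n_k)\to 0$ along an explicit subsequence $n_k^*\asymp n_k/\delta_k$.

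Second, even granting a giant-degree vertex, your height argument does not close. The ``remainder'' hanging off the giant vertex is not a single size-conditioned Bienaym\'e tree, so its height is not controlled by $\sqrt{\text{size}}$ or any off-the-shelf bound; and your proposed ``iterated condensation'' would require the same one-big-jump phenomenon to recur inside a forest of unconditioned trees of random total size, which is neither stated precisely nor plausible without further hypotheses. The paper bypasses this entirely: once $\Delta_n$ is known, it invokes the stochastic domination theorem from \cite{AD22} to compare the degree sequence of $\rtree_n$ to the ``caterpillar'' sequence $(\Delta_n,1,\ldots,1,0,\ldots,0)$, for which the height is just the maximum of $\Delta_n+1$ parts in a uniform composition of $n$, hence $O_{\p}(n\ln n/\Delta_n)$. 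Applying the large-degree proposition with $\sqrt{f(n)}$ in place of $f(n)$ then gives $\H(\rtree_{n_k})=o_{\p}(f(n_k)\ln n_k)$ along the subsequence. This comparison step is the key idea you are missing for the height bound.
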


Theorems \ref{thm:height_critical0} and \ref{thm:critical_short_trees0} contribute to the recent line of work devoted to obtaining universal bounds for the height and the width of size-conditioned Bienaym\'e trees without any regularity assumptions on the offspring distribution $\mu$, see \cite{Add19,ABHK21,AD22}. 
 
Asymptotics for the tail of $\H(\rtree)$ have first been obtained by Kolmogorov \cite{Kol38} for subcritical (mean less than $1$) and critical finite variance offspring distributions (see also \cite[Eq.~(9.5)~and~10.8]{Har63}). For subcritical offspring distributions $\mu$, the finite variance condition required by Kolmogorov was relaxed by Heathcote, Seneta \& Vere-Jones \cite{HSV67} to the condition $\sum_{j \geq 1} j  \ln(j) \mu_{j} < \infty$. For critical offspring distributions $\mu$, the finite variance condition has been lifted by Slack \cite{Sla68} to the condition that $\mu$ belongs to the domain of attraction of an $\alpha$-stable distribution with $\alpha \in (1,2]$; see also \cite{BS71}. The case $\alpha=1$ has been considered in  \cite{Sze76,NW07,LS08}.

All the work cited in the previous paragraph concerns unconditioned branching processes (or equivalently Bienaym\'e trees). However, 
following the pioneering work of Aldous \cite{Ald93} on scaling limits of random trees, much effort has been devoted to obtaining limit theorems for $\H(\Tn)$. Aldous' result implies that when $\mu$ is critical and has finite variance, $\H(\Tn)/\sqrt{n}$ converges in distribution to a constant times the supremum of the normalized Brownian excursion. This also holds for supercritical offspring distributions by exponential tilting. Tail bounds on $\H(\Tn)/\sqrt{n}$ which hold uniformly in $n$ were obtained in \cite{ADS13}.

Say that a function $f:\R_{+} \to \R_{+}$ is slowly varying (at infinity) if $f( cx )/f(x) \to 1$ as $x \to \infty$ for all $c > 0$. When $\mu$ is critical and belongs to the domain of attraction of a stable distribution with index in $\alpha \in (1,2]$, it is known \cite{Duq03,Kor13} that for a certain slowly varying function $\Lambda$, $\H(\Tn)$ divided by $\Lambda(n) n^{1-1/\alpha}$ converges to a constant multiple of the supremum of the so-called {\em normalised excursion of the $\alpha$-stable height process}, introduced in \cite{LL98}. The work \cite[Theorem 1.5]{DW17} describes the asymptotic behaviour of the tail of this supremum. 
Tail bounds on $\H(\Tn)/(\Lambda(n) n^{1-1/\alpha})$ which hold uniformly in $n$, and which match the asymptotic tail behaviour found in \cite{DW17}, were proved in \cite{Kor17}.

When $\mu$ is subcritical and satisfies $\mu_n \sim C/n^{\beta}$ with $\beta>2$, then $\H(\Tn)/\ln(n)$ converges in probability to a constant \cite{Kor15}. 
This in particular shows that the criticality assumption in Theorem \ref{thm:height_critical0} is necessary. 
The subcritical, heavy-tailed regime was first studied by Jonsson \& Stef\'ansson \cite{JS11}, who identified a condensation phenomenon whereby the tree $\Tn$ with high probability contains a single vertex of degree linear in $n$ and in particular has width $\Theta(n)$.

Our next result is a new limit theorem for $\mathsf{Height}(\rtree_{n})$ in a specific regime. Say a sequence $(a_n)_{n\geq 1}\in \R_{+}^{\N}$ is slowly varying if the function $f(x)\coloneqq a_{\lfloor x\rfloor}$ is slowly varying.
An offspring distribution $\mu$ is said to be in the {\em domain of attraction of a Cauchy random variable} if  $(n\mu[n,\infty)),n \ge 1)$ is slowly varying; see \cite[IX.8, Eq. (8.14)]{Fel71}. We make the following slightly stronger assumption: 
\begin{equation}
   \label{eq:hypmu}
  \mu_n=  \frac{L(n)}{n^{2}}  \qquad \text{and} \qquad  \mu \text{ is critical},
  \tag{$\textrm{H}_\mu$}   \end{equation}
  where $L$ is slowly varying. This can be viewed as a ``local'' Cauchy condition. In this framework, some properties of $\rtree_{n}$ have been studied in \cite{KR19}, motivated by applications to random maps.  An interesting condensation phenomenon appears for such trees: with probability tending to $1$ as $n \rightarrow \infty$, the maximal degree in $\Tn$ dominates the others, while the local limit of $\Tn$ is locally finite. This means that vertices with maximal degrees ``escape to infinity'' and disappear in the local limit.

It turns that a limit theorem holds for $\mathsf{Height}(\rtree_{n})$ and $\mathsf{Width}(\rtree_{n})$, with a certain  scaling sequence which we now define. Assume that $\mu$ satisfies \eqref{eq:hypmu} and let $Y$ be $\mu$-distributed. Then fix any sequence $(a_n,n \ge 1)$ such that $n \p(Y \ge a_n) \to 1$ as $n \to \infty$, and set 
\begin{equation}
\label{eq:hn}h_n = \int_1^{n \E{Y\I{Y \ge a_n}}} \frac{1}{ x \E{Y \I{Y \ge x}}} \mathrm{d}x\, .
\end{equation}
\begin{thm}
\label{thm:heightCauchy0}
Assume that  $\mu$ satisfies \eqref{eq:hypmu}. Then the following convergences holds in probability:
\[\frac{\mathsf{Height}(\rtree_{n})}{h_{n}} \quad \xrightarrow[n\to\infty]{} \quad 1 \qquad \textrm{and} \qquad \frac{\mathsf{Width}(\rtree_{n})}{n \E{Y\I{Y \ge a_n}}} \quad \xrightarrow[n\to\infty]{} \quad 1.\]
\end{thm}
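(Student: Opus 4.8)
The plan is to combine two ingredients. The first is the line-breaking reformulation of the Foata--Fuchs bijection used to prove Theorem~\ref{thm:height_critical0}, under which $\rtree_n$ is assembled by successively gluing line segments (``sticks''), so that $\mathsf{Height}(\rtree_n)$ lies between the length of the longest stick and the total length of the sticks along the deepest root-to-leaf chain. The second is the condensation structure of $\rtree_n$ under \eqref{eq:hypmu}: with probability tending to $1$ there is a unique vertex $v^{\star}$ whose number of children $\Delta_n$ dominates all other out-degrees (cf.\ \cite{KR19}). Throughout set $g(x)=\E{Y\I{Y\ge x}}$, so that $h_n=\int_1^{ng(a_n)}\frac{\mathrm dx}{x\,g(x)}$ and the claimed width scaling is $ng(a_n)$; under \eqref{eq:hypmu}, $g$ is slowly varying with $g(x)\to0$, so $ng(a_n)\to\infty$ and $ng(a_n)=o(n)$.

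For the width, the first step is to pin down $\Delta_n$. Conditionally on its degree sequence $\mathbf d=(d_1,\dots,d_n)$, $\rtree_n$ is uniform among plane trees with that sequence, and $\mathbf d$ is a length-$n$ i.i.d.\ $\mu$-sample conditioned to have sum $n-1$. Since $\mu$ lies in the domain of attraction of a Cauchy law, a one-big-jump analysis of this conditioned sum --- essentially the condensation statement of \cite{KR19} --- shows that $\Delta_n$ absorbs the gap between $n-1$ and the truncated mean $n\,\E{Y\I{Y<a_n}}=n(1-g(a_n))$, so $\Delta_n=(1+o_{\p}(1))\,ng(a_n)$ while no other out-degree is of comparable size. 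The children of $v^{\star}$ essentially fill one generation, giving $\mathsf{Width}(\rtree_n)\ge(1+o_{\p}(1))\,ng(a_n)$. For the reverse inequality I would show that no generation exceeds $(1+o_{\p}(1))\,ng(a_n)$: strictly below $v^{\star}$ the tree behaves like a subcritical forest with mean offspring $1-g(a_n)$, so the generation sizes there decay (in expectation) from the $\Delta_n$ children of $v^{\star}$ and stay $\le(1+o_{\p}(1))\Delta_n$; and the part of $\rtree_n$ that is not a descendant of $v^{\star}$ has $o_{\p}(ng(a_n))$ vertices in total (using the height upper bound below), so contributes negligibly to every generation. The universal estimate $\mathsf{Width}(\rtree_n)=o_{\p}(n)$ from Theorem~\ref{thm:height_critical0} is not sharp enough here, and one instead needs a genuine profile estimate for the de-condensed forest (that is, $\rtree_n$ with $v^{\star}$ and its pendant edges removed) --- for instance a second-moment bound on each generation size together with a union bound over generations.

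Turning to the height, the stick lengths in the line-breaking construction are controlled by partial sums of i.i.d.\ $\mu$-variables, and the effective ``local mean'' at scale $x$ --- the size of the subtree currently being explored, which dictates the rate at which a stick accumulates length --- is $1-g(a_x)$, equal to $1-g(x)$ to first order. Running $x$ down from the condensation scale $ng(a_n)$ (equivalently, up to an $o(h_n)$ error by Karamata, from $n$) to $1$, the number of stick-edges accumulated along the deepest chain is, to leading order, $\int_1^{ng(a_n)}\frac{\mathrm dx}{x\,g(x)}=h_n$. For the lower bound I would build an explicit chain of this total length, at each scale following the stick that leads into the largest subtree and invoking concentration of the relevant partial sums at each dyadic scale; for the upper bound I would show that no root-to-leaf chain of sticks accumulates more than $(1+o_{\p}(1))h_n$, which requires that stick lengths decay quickly enough along any chain that the (unboundedly many) shorter sticks cannot pile up beyond lower order. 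The precise constant $1$, rather than a mere order of magnitude, would come out of Karamata's theorem and the uniform convergence theorem for the slowly varying function $g$, together with standard concentration for the partial sums.

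The step I expect to be the main obstacle is the matching upper bound in each part. For the width this means controlling \emph{every} generation size of the de-condensed forest at the sharp scale $o(ng(a_n))$, rather than the soft $o(n)$ of Theorem~\ref{thm:height_critical0}; for the height it is the companion ``no over-accumulation'' estimate along all chains of sticks, together with the identification of the exact value $h_n$ and its constant. Both rest on quantitative control of the fluctuations of the conditioned degree sequence of $\rtree_n$, beyond the soft arguments underlying Theorem~\ref{thm:height_critical0}.
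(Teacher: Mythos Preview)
Your approach diverges substantially from the paper's, and for the height in particular it has a genuine gap.

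\textbf{Width.} The paper does not argue via the profile of a ``de-condensed forest''. It uses the sandwich $\Delta(\tree)\le\mathsf{Width}(\tree)\le\max\W^{\mathsf{bfs}}(\tree)$ together with the functional scaling limit of the breadth-first walk (Proposition~\ref{prop:W}, imported from \cite{KR19}): both $\Delta(\rtree_n)/b_n$ and $\max\W^{\mathsf{bfs}}(\rtree_n)/b_n$ converge to $1$ in probability, and Lemma~\ref{lem:equiv} gives $b_n\sim n\,g(a_n)$. This yields the matching upper bound in one stroke. Your route could perhaps be pushed through, but your description of the subtrees above $v^{\star}$ as a ``subcritical forest with mean offspring $1-g(a_n)$'' is not correct: in the total-variation approximation $\rtree'_n$ of Section~\ref{ssec:forest}, those subtrees are exactly i.i.d.\ \emph{unconditioned critical} $\mu$-Bienaym\'e trees, so any profile argument must control the generation sizes of a critical (not subcritical) forest of $\sim b_n$ trees.

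\textbf{Height.} Here the Foata--Fuchs line-breaking construction is the wrong tool for obtaining the sharp constant. In the paper the height is handled by first passing, via Theorem~\ref{thm:dTVLoc}, to the tree $\rtree'_n$, then decomposing around the condensation vertex $v'_n$: the subtrees above $v'_n$ are $|S_{n-1}|\sim b_n$ i.i.d.\ unconditioned $\mu$-Bienaym\'e trees $(\tau_i)$, and $\H(\rtree'_n)$ is pinched between $\max_i\H(\tau_i)$ and $\|v'_n\|+\max(\max_i\H(\tau_i),\H(\cF^n_L),\H(\cF^n_R))$. The result then reduces to showing $b_nQ_{(1+\eps)h_n}\to0$ and $b_nQ_{(1-\eps)h_n}\to\infty$, where $Q_m=\p(\H(\rtree)\ge m)$; this is Lemma~\ref{lem:cvproba}, proved via the Nagaev--Wachtel analysis of the recursion $Q_{m+1}=Q_m(1-\ell(Q_m))$ and the function $V_\star(y)=\int_y^1\frac{\mathrm dx}{x\ell(x)}$, together with the identification $h_n\sim V(b_n)$ from Lemma~\ref{lem:equiv}. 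Your heuristic that the ``effective local mean at scale $x$'' equals $1-g(x)$ and that integrating $\frac{1}{xg(x)}$ recovers $h_n$ does not connect to the Foata--Fuchs sticks: the stick lengths are governed by the locations of repeats in a random sequence drawn from the conditioned degree multiset, and there is no evident mechanism by which summing stick lengths along the deepest chain yields $\int_1^{ng(a_n)}\frac{\mathrm dx}{xg(x)}$ with the correct constant~$1$. The missing idea is precisely the reduction to the maximum height of $\sim b_n$ i.i.d.\ unconditioned trees and the Nagaev--Wachtel tail estimates for $Q_m$; without it, neither your upper nor your lower bound for the height can be made sharp.
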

We will later prove that, for $a_n$ and $h_n$ as above, $h_{n}\cdot n \E{Y\I{Y \ge a_n}}=O(n \log n)$ and so Theorem~\ref{thm:heightCauchy0} has the following corollary.
\begin{cor}\label{cor:hw}
Assume that  $\mu$ satisfies \eqref{eq:hypmu}. Then 
$\mathsf{Height}(\rtree_{n}) \cdot \mathsf{Width}(\rtree_{n}) = O_{\p}({n \ln(n)})$.
\end{cor}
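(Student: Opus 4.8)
The plan is to combine Theorem~\ref{thm:heightCauchy0} with a purely deterministic estimate on the sequences $h_n$ and $m_n:=\E{Y\I{Y\ge a_n}}$. Write $m(x):=\E{Y\I{Y\ge x}}$, a non‑increasing step function with $m(1)=\E Y=1$ and $m(x)\to 0$. By Theorem~\ref{thm:heightCauchy0}, $\mathsf{Height}(\rtree_n)\cdot\mathsf{Width}(\rtree_n)=(1+o_{\p}(1))\,h_n\cdot n m_n$, so it suffices to prove the deterministic bound $h_n\cdot n m_n=O(n\ln n)$, i.e.\ $h_n m_n=O(\ln n)$.

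First I would collect the standard consequences of \eqref{eq:hypmu}. With $c(k):=\p(Y\ge k)$, Karamata's theorem gives $c(k)\sim L(k)/k$, and summation by parts gives $m(k)=kc(k)+R(k)$ where $R(k):=\sum_{j>k}c(j)$; since $\sum_k c(k)=\E Y=1<\infty$, the boundary (de~Haan) case of Karamata's theorem shows that $R$ is slowly varying and $kc(k)=o(R(k))$, whence $m(k)\sim R(k)$ and $m$ is slowly varying. From $n\,\p(Y\ge a_n)\to1$ one gets $a_n\sim nL(a_n)$, so $a_n\to\infty$, $a_n=o(n)$, and $n m_n\sim nR(a_n)\gg a_n$; in particular $n m_n\to\infty$ and $n m_n<n$ (hence $\ln(n m_n)<\ln n$) for $n$ large.

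The heart of the argument is a monotonicity bound on $h_n$: since $m(x)\ge m(n m_n)$ for $x\le n m_n$,
\[
h_n=\int_1^{n m_n}\frac{\mathrm{d}x}{x\,m(x)}\ \le\ \frac{\ln(n m_n)}{m(n m_n)},
\qquad\text{so}\qquad
h_n\cdot n m_n\ \le\ n\ln(n m_n)\cdot\frac{m_n}{m(n m_n)}\ \le\ n\ln n\cdot\frac{m_n}{m(n m_n)}.
\]
Thus everything reduces to $m_n/m(n m_n)=O(1)$: the truncated first moment should drop by at most a constant factor between the scales $a_n$ and $n m_n$. Since $m\sim R$ it is enough to bound $R(\lceil a_n\rceil)/R(\lceil n m_n\rceil)$, and the key structural input from \eqref{eq:hypmu} is that $k\mapsto c(k)/R(k)$ is eventually non‑increasing: the inequality $c(k)R(k+1)\ge c(k+1)R(k)$ rewrites, using $R(k)=R(k+1)+c(k+1)$ and $c(k)-c(k+1)=\mu_k$, as $\mu_k R(k+1)\ge c(k+1)^2$, which holds for all large $k$ because $\mu_k=L(k)/k^2$, $c(k+1)^2=O(L(k)^2/k^2)$ and $R(k+1)/L(k)\to\infty$. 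Granting this, telescoping $R(j-1)/R(j)=1+c(j)/R(j)$ and $\ln(1+t)\le t$ give, for $n$ large,
\begin{align*}
\ln\frac{R(\lceil a_n\rceil)}{R(\lceil n m_n\rceil)}
&\ \le\ \sum_{\lceil a_n\rceil<k\le\lceil n m_n\rceil}\frac{c(k)}{R(k)}
\ \le\ (1+o(1))\,n m_n\cdot\frac{\p(Y\ge a_n)}{R(\lceil a_n\rceil)}\\
&\ =\ (1+o(1))\,\big(n\,\p(Y\ge a_n)\big)\cdot\frac{m_n}{R(\lceil a_n\rceil)}\ \longrightarrow\ 1,
\end{align*}
using $n\,\p(Y\ge a_n)\to1$ and $m_n\sim R(\lceil a_n\rceil)$. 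Hence $m_n/m(n m_n)=(1+o(1))R(\lceil a_n\rceil)/R(\lceil n m_n\rceil)\le e+o(1)$, and $h_n\cdot n m_n\le(e+o(1))\,n\ln n$, which gives the corollary.

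The main obstacle is exactly this last point, $m_n/m(n m_n)=O(1)$, i.e.\ that $R$ cannot decay by more than a bounded factor over the multiplicative window $[a_n,n m_n]$; the monotonicity of $c/R$ is the cleanest route I see, and it is here that \eqref{eq:hypmu} (rather than merely a Cauchy domain‑of‑attraction assumption) is used. The tempting alternative of telescoping $m$ directly through its increments $k\mu_k$ is more fragile, since $k\mu_k$ need not be eventually monotone when $L$ oscillates, whereas the inequality $\mu_k R(k+1)\ge c(k+1)^2$ carries a logarithmic factor of slack and is robust. One should also take a little care with the floors and ceilings arising because $m$ is a step function, and check that $n m_n>1$ so the integral defining $h_n$ is over a genuine interval; neither affects the asymptotics.
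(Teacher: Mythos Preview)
Your argument is correct, but it takes a different route than the paper's. The paper first passes, via Lemma~\ref{lem:equiv}, to the equivalent quantities $b_n$ and $V(b_n)$, and then shows $b_nV(b_n)=O(n\ln n)$ using the identity $b_n\sim n\ell_\star(b_n)$ (quoted from \cite{Ber19}) together with the trivial bound $\ell_\star(b_n)/\ell_\star(y)\le C$ for $1\le y\le b_n$, which follows from monotonicity of $\ell_\star$ (or the representation theorem). In effect, by evaluating $m$ at $b_n$ rather than at $a_n$, the paper sidesteps entirely the ratio $m(a_n)/m(nm_n)$ that you work to control.

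Your approach stays with $h_n$ and $m(x)=\E{Y\I{Y\ge x}}$ throughout, and replaces the appeal to $b_n\sim n\ell_\star(b_n)$ by a direct combinatorial argument: the eventual monotonicity of $k\mapsto c(k)/R(k)$, which indeed genuinely uses the local hypothesis \eqref{eq:hypmu}, combined with telescoping, yields $R(a_n)/R(nm_n)\le e+o(1)$. This is more hands-on but also more self-contained---it does not require Lemma~\ref{lem:equiv}, the auxiliary function $\ell_\star$, or the cited fact $b_n\sim n\ell_\star(b_n)$---and it gives an explicit constant. The paper's argument is shorter given the machinery already in place; yours would be preferable if one wanted to prove the corollary without developing that machinery.
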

This answers a question from \cite{Add19} in the affirmative, for the class of distributions satisfying \eqref{eq:hypmu}. For a further discussion of this question, see Section~\ref{sec:future}. 

We will later see that $\E{Y\I{Y \ge a_n}}$ and $(h_{n})$ are slowly varying. The fact that such a limit theorem holds for $\mathsf{Height}(\rtree_{n})$ is surprising for multiple reasons. First, $\rtree_{n}$ does not have any nontrivial scaling limits \cite{KR19}. Second, in contrast to the other cases when limit theorems for $\mathsf{Height}(\rtree_{n})$ hold, here the asymptotic behavior of the tail of $\mathsf{Height}(\rtree)$ is not universal, in the following sense: if $\mu$ and $\mu'$ are two offspring distributions satisfying respectively $\textrm{H}_\mu$ and $\textrm{H}_{\mu'}$ such that $\mu_n \sim \mu'_n$ as $n \rightarrow \infty$, then in general it is not true that $\p(\mathsf{Height}(\rtree) \geq x)  \sim \p(\mathsf{Height}(\rtree') \geq x)$ as $ x \rightarrow \infty$, where $\rtree'$ is a $\mu'$ Bienaym\'e tree  (this is true when $L(x)=o(1/\ln(x)^{2})$ but false in general, see \cite[Sec.~3.2]{NW07}). However, it is true that $h_{n} \sim h'_{n}$ as $n \rightarrow \infty$, where $h'_{n}$ is defined like $h_{n}$ but replacing $\mu$ by $\mu'$: in a certain sense, the effect of size-conditioning restores asymptotic universality. This is the reason why the proof of Theorem \ref{thm:heightCauchy0} is  rather delicate.

Let us also mention that if one chooses another sequence $\tilde{a}_n$ such that $n \p(Y \ge \tilde{a}_n) \to 1$ as $n \to \infty$ and defines $\tilde{h}_{n}$ accordingly, then $h_{n} \sim \tilde{h}_{n}$ as $n \rightarrow \infty$. In particular, for concreteness, one could take  $a_{n}= \inf\{u >0: \p(Y \geq u) \leq 1/n\} $.

The main strategy to prove Theorem \ref{thm:heightCauchy0} is to combine results of \cite{KR19} concerning the structure of $\rtree_{n}$ with bounds from \cite{NW07} on the tail of $\mathsf{Height}(\rtree)$.

\paragraph{\textbf{Examples.}} Let us give some explicit examples, inspired by \cite{NW07}; see Section~\ref{ssec:examples} for details.

--  For $L(n) \sim \beta \ln(n)^{-1-\beta}$ with $\beta>0$, we have \[
 h_{n}  \quad \sim \quad \frac{1}{1+\beta} \ln(n)^{1+\beta}
 \]
 
--  Set $\ln_{(1)}(x)=\ln(x)$ and for every $k \geq 1$ define recursively $\ln_{(k+1)} (x) = \ln(\ln_{(k)}(x))$. For
 \[
 L(n) \quad \sim \quad  \frac{1}{(\ln_{(k)}(n))^{2}} \prod_{i=1}^{k-1} \frac{1}{\ln_{(i)}(n) } \]
 with $k \geq 2$, we have 
 \[
 h_{n}  \quad \sim \quad \ln(n) \ln_{(k)}(n)
 \]

-- For $L(n) \sim  \frac{1-\beta}{\beta} \ln(n)^{-\beta} e^{-\ln(n)^{\beta}} $ with $\beta \in (0,1)$, letting $k \geq 0$ be the smallest integer such that $ \frac{k}{k+1} \leq \beta < \frac{k+1}{k+2}$, we have
 \[
 h_{n}  \quad \sim \quad \exp \left( \sum_{i=0}^{k}  P_{i}(\beta) \ln(n)^{\beta-i(1-\beta)} \right),
 \]
for some polynomials $(P_i,0 \le i \le k)$. 

The final example demonstrates the wide range of asymptotic behaviour which can appear, and also  illustrates one subtlety in the definition of $h_n$. Indeed, in the final example, taking the upper limit in the integral in \eqref{eq:hn} to be $n \E{Y\I{Y \ge n}}$, or even $n$, rather than $n \E{Y\I{Y \ge a_n}}$, would change the asymptotic behaviour of $h_n$; this is not the case in the first two examples. (Again, see Section~\ref{ssec:examples}  for the details.)
 
 \paragraph{\bf Outline.}
 
First, in Section \ref{sec:trees}, we discuss the encoding of (random) planar trees by (random) lattice paths; such encodings are used throughout the remainder of the paper. Then, in Section \ref{sec:cauchy_trees}, we discuss some general results which describe the structure of large Cauchy--Bienaym\'e trees. This allows us to prove  prove the height and width bounds of \ref{thm:heightCauchy0}, as well as Corollary \ref{cor:hw}. We also provide further detail about the examples from above. In Section \ref{sec:height_critical} we use lattice path encodings to derive information about the degree sequences of large critical Bienaym\'e trees and to prove the width bound from Theorem~\ref{thm:height_critical0}. We also introduce an additional tool -- the Foata-Fuchs bijection for labeled trees with given degrees \cite{cayley_us,FoataFuchs}  --  which we combine with the previously derived information about the degree sequences of large critical Bienaym\'e trees in order to prove Theorem \ref{thm:height_critical0}. 
{In Section \ref{sec:quite_short_quite_fat} we prove Theorem \ref{thm:critical_short_trees0} using a stochastic domination result for the height of labeled trees with given degrees \cite{AD22} and an explicit construction of offspring distributions that yield trees with large width and small height.} Finally,~Section~\ref{sec:future} contains some open problems and conjectures related to the possible asymptotic behaviour of conditioned critical Bienaym\'e trees (with a particular focus on the Cauchy case).

 {\paragraph{\bf Acknowledgements.} This research was mainly done during the Seventeenth Annual Workshop on Probability and Combinatorics at McGill University’s Bellairs Institute in Holetown, Barbados. We are grateful to Bellairs Institute for its hospitality. LAB acknowledges the financial support of NSERC and the CRC program. SD acknowledges the financial support of the CogniGron research center and the Ubbo Emmius Funds (Univ. of Groningen). We also thank the referee for a very careful reading.}

\tableofcontents

\section{\bf Bienaym\'e trees and their coding}
\label{sec:trees}

\subsection{Plane trees}
\label{sssec:planetrees}

We define plane trees according to Neveu's formalism \cite{Nev86}. Let $\N = \{1, 2, \dots\}$ be the set of positive integers, and consider the set of labels $\mathcal{U} = \bigcup_{n \ge 0} \N^n$ (where by convention $\N^0 = \{\varnothing\}$). For every $v = (v_1, \dots, v_n) \in \mathcal{U}$, the length of $v$ is $\|v\| = n$. For $u,v\in \mathcal{U}$, we let $uv$ be the concatenation of $u$ and $v$.

Then, a (locally finite) \emph{plane tree} is a nonempty subset $\tree \subset \mathcal{U}$ satisfying the following conditions. First, $\varnothing \in \tree$ ($\varnothing$ is called the \textit{root vertex} of the tree). Second, if $v=(v_1,\ldots,v_n) \in \tree$ with $n \ge 1$, then $(v_1,\ldots,v_{n-1}) \in \tree$ ($(v_1,\ldots,v_{n-1})$ is called the \textit{parent} of $v$ in $\tree$). Finally, if $v \in \tree$, then there exists an integer $k_v(\tree) \ge 0$ such that $(v_1,\ldots,v_n,i) \in \tree$ if and only if $1 \le i \le k_v(\tree)$.  The quantity $k_v(\tree)$ is the \textit{number of children} of $v$ in $\tree$; we also refer to $k_v(\tree)$ as the \emph{degree} of $v$ in $\tree$. The plane tree $\tree$ may be seen as a genealogical tree in which the individuals are the vertices $v\in\tree$. 

 For $v,w \in \tree$, we let  $\llbracket v, w \rrbracket$ be the vertices belonging to the shortest path from $v$ to $w$ in $\tree$. We use $\llbracket v, w \llbracket$ for the  set obtained from $\llbracket v, w \rrbracket$ by excluding $w$. We also let $|\tree|$ be the total number of vertices (that is, the size) of the plane tree $\tree$. 

\subsection{Coding trees by lattice paths}
\label{sssec:coding}
Let $\tree$ be a {finite} plane tree. We associate with every ordering $\varnothing=u_{0}\prec u_{1}\prec \cdots \prec u_{|\tree|-1}$ of  the vertices of $\tree$  the path  $\W(\tree)=( \W_n(\tree) : 0 \leq n < |\tree|)$,  defined by $ \W_0(\tree)=0$, and $ \W_{n+1}(\tree)= \W_{n}(\tree)+k_{u_n}(\tree)-1$ for every $0 \leq n < |\tree|$.

We will use two different orderings of the vertices of a tree:
\begin{enumerate}
\item[(i)] the lexicographical ordering, where  $v \prec w$ if there exists $u \in \mathcal{U}$ such that $v = u(v_1, \dots, v_n)$, $w = u(w_1, \dots, w_m)$ and $v_1 < w_1$;
\item[(ii)] the breadth-first search ordering, where $v \prec w$ if either $\|v\| <\|w\|$, or $\|v\|=\|w\|$ and $v=u(v_1, \dots, v_m)$ and $w=u (w_1, \dots, w_m)$ with $v_{1}<w_{1}$.
\end{enumerate}

Denote by $\W^{\mathsf{lex}}(\tree)$ and $\W^{\mathsf{bfs}}(\tree)$ the paths constructed by using respectively the lexicographical and  breadth-first search ordering of the vertices of $ \tree$. We refer to both $\W^{\mathsf{lex}}$ and $\W^{\mathsf{bfs}}$ as {\em exploration processes}; we call $\W^{\mathsf{lex}}(\tree)$ the {\em {\L}ukasiewicz path} of $ \tree$ and $\W^{\mathsf{bfs}}(\tree)$ the {\em breadth-first queue process} of~$\tree$.

The following deterministic fact will be useful.
Let $\tree$ be a finite tree and order the vertices of $\tree$ using the breadth-first search ordering as $u_0,\dots,u_{|\tree|-1}$. Then for every $1 \le i \le |\tree|$, writing $\ell=\|u_{i-1}\|$, then 
$\W^{\mathsf{bfs}}_{{i}}(\tree)$ is equal to the total number of children of generation-$\ell$ predecessors of $u_{i-1}$ plus the number of generation-$\ell$ successors of $u_{i-1}$ (where ``predecessor'' and ``successor'' are relative to the breadth-first search order).

As a consequence,
\begin{equation}
\label{eq:width}
\Delta(\tree) \leq \textsf{Width}(\tree) \leq \max(\W^{\mathsf{bfs}}_i(\tree), 0 \le i < |t|), 
\end{equation}
where we define $\Delta(\tree)=\max_{u \in \tree} k_{u}(\tree)$ to be the maximal number of children of a vertex.

Set $[n]=\{1,2,\dots, n\}$. A {\em discrete bridge of length $n$} is a lattice path $\mathbf{s}=(s_0,\ldots,s_n)$ with $s_0=0$ and $s_n=-1$ and $s_{i}-s_{i-1} \ge -1$ for $i \in [n]$; it is a {\em discrete excursion} if additionally $s_i \ge 0$ for $i \in [n-1]$.

For either $\ast \in \{\mathsf{lex},\mathsf{bfs}\}$ and any $n \ge 1$, the map
\[
\tree \mapsto \W^{\ast}(\tree)
\]
induces an invertible function with domain the set of plane trees with $n$ vertices and range the set of discrete excursions of length $n$. 

The {\em Vervaat transform} is an $n$-to-$1$ map from discrete bridges of length $n$ to discrete excursions of length n which is defined as follows. Let $\mathbf{s}=(s_i : 0 \leq i \leq n)$ be a discrete bridge, and for $1 \le i \le n$ let $x_i=s_i-s_{i-1}$. 
Also, let
\[
m=m(\mathbf{s})\coloneqq\min\{0\leq i \leq n : s_i=\min\{s_j : 0\leq j \leq n\}\}.
\]
be the first time at which $(s_i : 0\leq i \leq n)$ reaches its overall minimum. Then the Vervaat transform $\mathcal{V}(\mathbf{s}) \coloneqq (\mathcal{V}(\mathbf{s})_i : 0\leq i \leq n)$ of $\mathbf{s}$ is the walk obtained by reading the increments $(x_1,\ldots, x_n)$ from left to right in cyclic order, started from $m$. Namely, 
\[
\mathcal{V}(\mathbf{s})_0=0 \quad \text{and} \quad \mathcal{V}(\mathbf{s})_{i+1}-\mathcal{V}(\mathbf{s})_{i}= x_{m+i \mod[n]}, \quad 0\leq i < n.
\] 
See Figure \ref{fig:vervaat} for an illustration. 

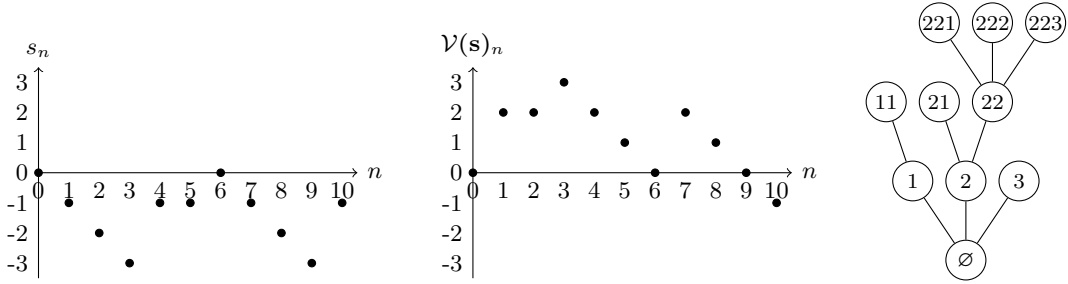
\begin{figure}[h!]
\begin{center}
\begin{tikzpicture}[scale = 0.4]
\draw[->] (0,-3.5) -- (0,3.5);
\draw[->] (0,0) -- (10.5,0);

\node[above,font=\footnotesize] at (0,3.5) {$s_n$};
\node[right,font=\footnotesize] at (10.5,0) {$n$};

\node[draw,circle,fill,inner sep = 1pt] at (0,0) {};
\node[draw,circle,fill,inner sep = 1pt] at (1,-1) {};
\node[draw,circle,fill,inner sep = 1pt] at (2,-2) {};
\node[draw,circle,fill,inner sep = 1pt] at (3,-3) {};
\node[draw,circle,fill,inner sep = 1pt] at (4,-1) {};
\node[draw,circle,fill,inner sep = 1pt] at (5,-1) {};
\node[draw,circle,fill,inner sep = 1pt] at (6,0) {};
\node[draw,circle,fill,inner sep = 1pt] at (7,-1) {};
\node[draw,circle,fill,inner sep = 1pt] at (8,-2) {};
\node[draw,circle,fill,inner sep = 1pt] at (9,-3) {};
\node[draw,circle,fill,inner sep = 1pt] at (10,-1) {};

\node[left,font=\footnotesize] at (0,-1) {-1};
\node[left,font=\footnotesize] at (0,-2) {-2};
\node[left,font=\footnotesize] at (0,-3) {-3};
\node[left,font=\footnotesize] at (0,0) {0};
\node[left,font=\footnotesize] at (0,1) {1};
\node[left,font=\footnotesize] at (0,2) {2};
\node[left,font=\footnotesize] at (0,3) {3};

\node[below,font=\footnotesize] at (0,0) {0};
\node[below,font=\footnotesize] at (1,0) {1};
\node[below,font=\footnotesize] at (2,0) {2};
\node[below,font=\footnotesize] at (3,0) {3};
\node[below,font=\footnotesize] at (4,0) {4};
\node[below,font=\footnotesize] at (5,0) {5};
\node[below,font=\footnotesize] at (6,0) {6};
\node[below,font=\footnotesize] at (7,0) {7};
\node[below,font=\footnotesize] at (8,0) {8};
\node[below,font=\footnotesize] at (9,0) {9};
\node[below,font=\footnotesize] at (10,0) {10};
\end{tikzpicture}
\quad 
\begin{tikzpicture}[scale = 0.4]
\draw[->] (0,-3.5) -- (0,3.5);
\draw[->] (0,0) -- (10.5,0);

\node[above,font=\footnotesize] at (0,3.5) {$\mathcal{V}(\mathbf{s})_n$};
\node[right,font=\footnotesize] at (10.5,0) {$n$};

\node[draw,circle,fill,inner sep = 1pt] at (0,0) {};
\node[draw,circle,fill,inner sep = 1pt] at (1,2) {};
\node[draw,circle,fill,inner sep = 1pt] at (2,2) {};
\node[draw,circle,fill,inner sep = 1pt] at (3,3) {};
\node[draw,circle,fill,inner sep = 1pt] at (4,2) {};
\node[draw,circle,fill,inner sep = 1pt] at (5,1) {};
\node[draw,circle,fill,inner sep = 1pt] at (6,0) {};
\node[draw,circle,fill,inner sep = 1pt] at (7,2) {};
\node[draw,circle,fill,inner sep = 1pt] at (8,1) {};
\node[draw,circle,fill,inner sep = 1pt] at (9,0) {};
\node[draw,circle,fill,inner sep = 1pt] at (10,-1) {};

\node[left,font=\footnotesize] at (0,-1) {-1};
\node[left,font=\footnotesize] at (0,-2) {-2};
\node[left,font=\footnotesize] at (0,-3) {-3};
\node[left,font=\footnotesize] at (0,0) {0};
\node[left,font=\footnotesize] at (0,1) {1};
\node[left,font=\footnotesize] at (0,2) {2};
\node[left,font=\footnotesize] at (0,3) {3};

\node[below,font=\footnotesize] at (0,0) {0};
\node[below,font=\footnotesize] at (1,0) {1};
\node[below,font=\footnotesize] at (2,0) {2};
\node[below,font=\footnotesize] at (3,0) {3};
\node[below,font=\footnotesize] at (4,0) {4};
\node[below,font=\footnotesize] at (5,0) {5};
\node[below,font=\footnotesize] at (6,0) {6};
\node[below,font=\footnotesize] at (7,0) {7};
\node[below,font=\footnotesize] at (8,0) {8};
\node[below,font=\footnotesize] at (9,0) {9};
\node[below,font=\footnotesize] at (10,0) {10};
\end{tikzpicture}
\quad 
\begin{tikzpicture}[scale = 0.7,
sommet/.style = {draw,circle, font=\scriptsize,inner sep=0,minimum size=15pt},
etiquete/.style = {font = \scriptsize}]

\node[sommet] (0) at (0,0) {$\varnothing$};
\node[sommet] (1) at (-1,1.5) {$1$};
\node[sommet] (2) at (0,1.5) {$2$};
\node[sommet] (3) at (1,1.5) {$3$};
\node[sommet] (11) at (-1.5,3) {$11$};
\node[sommet] (21) at (-0.5,3) {$21$};
\node[sommet] (22) at (0.5,3) {$22$};
\node[sommet] (221) at (-0.5,4.5) {$221$};
\node[sommet] (222) at (0.5,4.5) {$222$};
\node[sommet] (223) at (1.5,4.5) {$223$};

\draw (0) -- (1);
\draw (0) -- (2);
\draw (0) -- (3);
\draw (1) -- (11);
\draw (2) -- (21);
\draw (2) -- (22);
\draw (22) -- (221);
\draw (22) -- (222);
\draw (22) -- (223);

\end{tikzpicture}
\end{center}
\caption{Left: A walk $\mathbf{s}=(s_0,\ldots,s_{10})$ with $s_{10}=-1$. Centre: the Vervaat transform $\mathcal{V}(\mathbf{s})$. Right: the plane tree $\tree$ with $\W^{\mathsf{bfs}}(\tree)=\mathcal{V}(\mathbf{s})$.}
\label{fig:vervaat}
\end{figure}

Let $\mu$ be a critical or subcritical offspring distribution. The Bienaym\'e measure with offspring distribution $\mu$ is the probability measure $\P_\mu$  on plane trees that is characterized by
\begin{equation}\label{eq:def_GW}
\P_\mu(\tree) = \prod_{u \in \tree} \mu_{k_u(\tree)}
\end{equation}
for every finite plane tree $\tree$ (see~\cite[Prop.~1.4]{LG05}); in particular, $\rtree(\mu)$ has law $\P_\mu$.

The following result relates the exploration processes of $\rtree(\mu)$ to a random walk (see \cite[Proposition 1.5]{LG05} and/or \cite[Chapter 5 Exercise 1]{MR2245368}). From this point on, $(X_{i} : i \geq 1)$ will always denote a sequence of i.i.d.\ random variables with law given by $\P(X_{1}=i)=\mu_{i+1}$ for $i \geq -1$ for some offspring distribution $\mu$, and we will always write $S_0=0$ and $S_i=X_1+\ldots+X_i$ for $i \ge 1$. Also, let $\zeta=\inf\{i \ge 1: S_i = -1\}$.
\begin{prop}
\label{prop:GWRW}
Fix a critical or subcritical offspring distribution $\mu$ with $\mu_0>0$. 
Then with $\rtree=\rtree(\mu)$, for every  $\ast \in \{\mathsf{lex},\mathsf{bfs}\}$ we have
\[
(\W^{\ast}_{i}( \rtree),0 \le i \le |\rtree|)\eqdist 
(S_i,0 \le i \le \zeta)\, .
\]
Moreover, for any $n \in \N$ the following holds. For any discrete excursion $\mathbf{w}=(w_0,\ldots,w_n)$, 
\[
\Cprob{(S_i,0 \le i \le n)=\mathbf{w}}{\zeta=n}
=\Cprob{\mathcal{V}(S_i,0 \le i \le n)=\mathbf{w}}{S_n=-1}\, .
\]
\end{prop}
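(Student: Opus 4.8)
The plan is to establish the two assertions separately, in each case moving freely between a plane tree and the increments of its exploration.

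\emph{The distributional identity.} Since $\mu$ is critical or subcritical with $\mu_0>0$, the $\mu$-branching process becomes extinct almost surely, so $|\rtree|<\infty$ a.s.; correspondingly $S$ is a random walk with increment mean $\E{X_1}\le 0$ and $\p(X_1=-1)=\mu_0>0$, hence (using recurrence of $S$ when $\E{X_1}=0$) $\zeta<\infty$ a.s. Thus $\W^{\ast}(\rtree)$ and $(S_i,0\le i\le\zeta)$ are both well-defined, and both take values in the countable set of discrete excursions: for $\W^{\ast}(\rtree)$ this is the bijection recalled in Section~\ref{sec:trees}, while for $(S_i,0\le i\le\zeta)$ it is immediate from the increments being $\ge -1$ together with the definition of $\zeta$. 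It therefore suffices to match their masses on an arbitrary discrete excursion $\mathbf{w}=(w_0,\dots,w_n)$. Let $\tree$ be the unique plane tree with $\W^{\ast}(\tree)=\mathbf{w}$; exploring its vertices in the relevant order as $u_0,\dots,u_{n-1}$, the recursion defining $\W^{\ast}$ gives $k_{u_i}(\tree)=w_{i+1}-w_i+1$, so by~\eqref{eq:def_GW},
\[
\P_\mu(\tree)=\prod_{u\in\tree}\mu_{k_u(\tree)}=\prod_{i=0}^{n-1}\mu_{w_{i+1}-w_i+1}.
\]
Since $\mathbf{w}$ is an excursion, $\{(S_i)_{0\le i\le n}=\mathbf{w}\}\subseteq\{\zeta=n\}$, so by independence and $\p(X_1=j)=\mu_{j+1}$,
\[
\p\big((S_i)_{0\le i\le\zeta}=\mathbf{w}\big)=\p\big((S_i)_{0\le i\le n}=\mathbf{w}\big)=\prod_{i=1}^n\mu_{w_i-w_{i-1}+1},
\]
and the two products agree. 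This proves the distributional identity.

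\emph{The Vervaat identity.} Fix $n$ with $\p(\zeta=n)>0$ (otherwise the conditional probabilities are vacuous; note this also forces $\p(S_n=-1)>0$ since $\{\zeta=n\}\subseteq\{S_n=-1\}$), and fix a discrete excursion $\mathbf{w}$ of length $n$. Write $p(\mathbf{w})\coloneqq\p((S_i)_{0\le i\le n}=\mathbf{w})=\prod_{i=1}^n\mu_{w_i-w_{i-1}+1}$. Exactly as in the first part, $\{(S_i)_{0\le i\le n}=\mathbf{w}\}=\{(S_i)_{0\le i\le n}=\mathbf{w},\,\zeta=n\}$, so the left-hand side of the asserted identity equals $p(\mathbf{w})/\p(\zeta=n)$. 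For the right-hand side: from the definition of $\mathcal{V}$, the fibre $\mathcal{V}^{-1}(\mathbf{w})$ is exactly the set of the $n$ cyclic shifts of the increment walk of $\mathbf{w}$ (there are $n$ of them, in agreement with the fact recalled in Section~\ref{sec:trees} that $\mathcal{V}$ is $n$-to-$1$ on discrete bridges of length $n$), and as $\mathbf{w}$ ranges over discrete excursions of length $n$ these fibres partition the discrete bridges of length $n$. Since $(X_i)$ are i.i.d., the walk probability $\p((S_i)_{0\le i\le n}=\mathbf{s})$ is invariant under a cyclic permutation of the increments of $\mathbf{s}$, so each path in $\mathcal{V}^{-1}(\mathbf{w})$ has probability $p(\mathbf{w})$; therefore $\p\big(\mathcal{V}(S_0,\dots,S_n)=\mathbf{w},\,S_n=-1\big)=n\,p(\mathbf{w})$. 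Summing this identity over all discrete excursions $\mathbf{w}'$ of length $n$ and using the partition into fibres yields $\p(S_n=-1)=n\sum_{\mathbf{w}'}p(\mathbf{w}')=n\,\p(\zeta=n)$. Dividing, the right-hand side equals $\tfrac{n\,p(\mathbf{w})}{n\,\p(\zeta=n)}=\tfrac{p(\mathbf{w})}{\p(\zeta=n)}$, which is the left-hand side.

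\emph{Expected main difficulty.} The computations are routine once arranged as above; the point that needs the most care is the structure of the Vervaat fibres, namely that $\mathcal{V}^{-1}(\mathbf{w})$ consists precisely of the $n$ cyclic shifts of the increment walk of $\mathbf{w}$, so that the i.i.d.\ cyclic invariance of increment probabilities applies and yields the normalisation $\p(S_n=-1)=n\,\p(\zeta=n)$ (Kemperman's formula). Everything else is bookkeeping with~\eqref{eq:def_GW} and the lattice-path encodings of Section~\ref{sec:trees}.
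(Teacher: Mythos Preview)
Your proof is correct. The paper does not actually prove this proposition: it simply cites \cite[Proposition~1.5]{LG05} and \cite[Chapter~5, Exercise~1]{MR2245368} and moves on. What you have written is the standard self-contained argument behind those references---matching the product formula~\eqref{eq:def_GW} against the i.i.d.\ increment law for the first identity, and combining the cycle lemma (every discrete bridge of length $n$ summing to $-1$ has exactly $n$ distinct cyclic shifts, precisely one of which is an excursion) with cyclic invariance of i.i.d.\ increment probabilities to recover Kemperman's formula $\p(S_n=-1)=n\,\p(\zeta=n)$ for the second. The only step you lean on without justification is that the $n$ cyclic shifts of $\mathbf{w}$ are genuinely distinct; you cover this by invoking the $n$-to-$1$ property of $\mathcal{V}$ stated in Section~\ref{sssec:coding}, which is legitimate, though it is worth noting that distinctness also follows directly from the fact that a periodic increment sequence would force the total sum $-1$ to be a nontrivial multiple.
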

Together the two identities of Proposition~\ref{prop:GWRW} imply that for $\rtree_n=\rtree_n(\mu)$, we have 
\begin{equation}\label{eq:condition_identity}
\p((\W^{\ast}_{i}( \rtree_n),0 \le i \le n)=\mathbf{s})
=
\Cprob{\mathcal{V}(S_i,0 \le i \le n)=\mathbf{w}}{S_n=-1}.
\end{equation}

The following fact follows readily from \eqref{eq:condition_identity}.
\begin{rk}
\label{rk:degrees}
The multiset of degrees in $\rtree_n$ has the same law as the conditional law of the multiset $\{X_i+1,1 \le i \le n\}$ given that $S_n=-1$.
 \end{rk}

\section{\bf The height and width of large conditioned critical Cauchy--Bienaym\'e trees}
\label{sec:cauchy_trees}

Throughout this section we assume that $\mu$ satisfies \eqref{eq:hypmu}, namely that $\mu$ is critical and $\mu_n=L(n)/n^{2}$ with $L : \R_{+} \rightarrow \R_{+}$ is slowly varying. 

\subsection{Scaling constants}\label{sec:scalingconstants} 
Let $(a_{n} : n \geq 1)$ and $(b_{n} : n \geq 1)$ be sequences such that
\begin{equation}
\label{eq:defanbn} n \p(X \geq a_{n})  \quad \xrightarrow[n\to\infty]{} \quad  1, \qquad b_{n}=-n \E{X \mathbbm{1}_{|X| \leq  a_{n}}},
\end{equation} 
where the law of $X$ is given by $\P(X=i)=\mu_{i+1}$ for $i \geq -1$. Observe that since $\mu$ is critical we have $\E{X}=0$, so that $b_{n}=n \E{X \mathbbm{1}_{|X| >  a_{n}}}$.

The main reason why  the scaling constants $(a_{n})_{n \geq 1}$ and $(b_{n})_{n \geq 1}$ appear is the following: for $(X_{i}:i \geq 1)$ a  a sequence of i.i.d.~random variables distributed as $X$ and $S_{n}=X_{1}+ \cdots+X_{n}$, then the convergence in distribution
\begin{equation}
\label{eq:cvCauchy}
 \frac{ S_{n} +b_{n}}{a_{n}}  \quad \xrightarrow[n\to\infty]{(d)} \quad  \mathcal{C}_{1}
\end{equation} 
holds, where $ \mathcal{C}_{1}$ is the random variable with Laplace transform given by $\E{e^{-\lambda \mathcal{C}_{1} } }=e^{\lambda \ln(\lambda)}$ for $\lambda>0$; $\mathcal{C}_1$ is an asymmetric Cauchy random variable with skewness $1$, see \cite[Chap. IX.8 and Eq.~(8.15) p315]{Fel71}. It is important to observe that even though $\E{S_{n}}=0$, a centering term is required in \eqref{eq:cvCauchy}.
The sequences $(a_{n})$ and $(b_{n})$ are both regularly varying of index $1$, meaning that $a_{n}/n$ and $b_{n}/n$ are slowly varying, and also $b_{n} \rightarrow  \infty$ and $a_{n}=o(b_{n})$ (see Remark \ref{rk:anbn}).

\begin{figure}[h!]
\centering
\includegraphics[scale=.5]{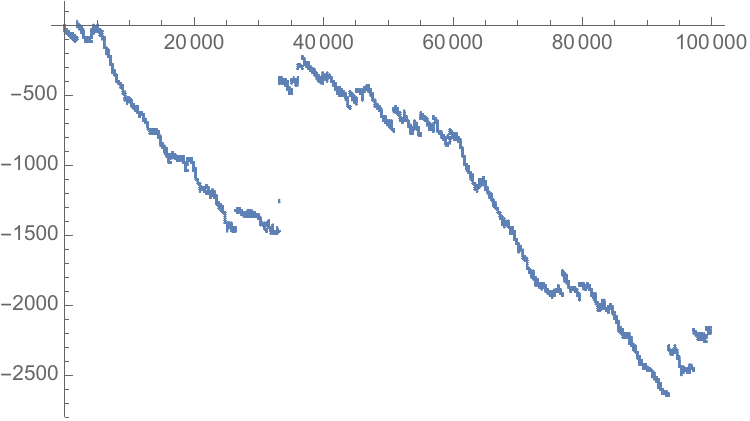}
\hfill 
\includegraphics[scale=.5]{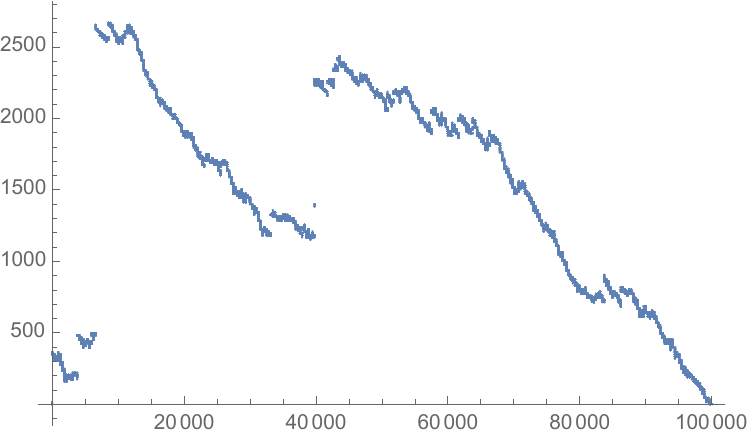}
\caption{Left: a simulation of $(S_{i})_{0 \leq i \leq n-1}$ for $n=100000$, where $(S_i,i \ge 0)$ is the random walk defined in Section~\ref{sec:scalingconstants} with $\mu_n\sim \frac{c}{n^{2} \ln(n)^{2}}$. Right: the associated discrete excursion $Z^{(n)}$ obtained by performing the Vervaat transform on $(S_0,S_1,\ldots,S_{n-1},-1)$. 
}
\label{fig:walk}
\end{figure}

\subsection{The exploration processes of a size-conditioned Cauchy--Bienaymé tree}
\label{ssec:luka}
We now describe the behavior of the exploration processes  of $ \rtree_{n}$, a $\mu$-Bienaym\'e tree conditioned to have size $n$, under the assumption \eqref{eq:hypmu}.
We will use the Vervaat transform $\mathcal{V}$ defined in Section \ref{sssec:coding}. Recall that $(S_i : i \geq 0)$ is a random walk with increments distributed as $X$, and for every $n\in\Z_+$ define the random process $Z^{(n)} \coloneqq (Z^{(n)}_{i} : 0\leq i  \leq n )$  by 
\begin{equation}\label{eqn:ZnVervaat}
	Z^{(n)} \coloneqq \mathcal{V}(S_0,S_1,\ldots,S_{n-1},-1)
\end{equation} 
when $S_{n-1} \leq 0$ and $Z^{(n)}=(0,0, \ldots,-1)$ otherwise (observe that $S_{n-1} \leq 0$ with probability tending to $1$ as $ n \rightarrow \infty$ by \eqref{eq:cvCauchy}).  
The next theorem states that both of the exploration processes of $ \rtree_{n}$ are close in total variation distance to $Z^{(n)}$ when $n$ goes to infinity.
\begin{thm}[Theorem 21 in \cite{KR19}]
\label{thm:dTVLoc}
 For every  $\ast \in \{\mathsf{lex},\mathsf{bfs}\}$, we have
\[
d_{\mathrm{TV}}\left( \mathsf{W}^{\ast}(\rtree_{n}) , Z^{(n)}\right)  \quad \xrightarrow[n\to\infty]{}  \quad 0,
\]
where $d_{\mathrm{TV}}$ denotes the total variation distance on $\R^{n+1}$ equipped with the product topology.
\end{thm}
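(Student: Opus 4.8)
The plan is to reduce the statement to a quantitative ``single big jump'' estimate for the random walk $(S_i)$, exploiting the exact identity \eqref{eq:condition_identity}. By \eqref{eq:condition_identity}, the two exploration processes $\W^{\mathsf{lex}}(\rtree_{n})$ and $\W^{\mathsf{bfs}}(\rtree_{n})$ have the \emph{same} law, namely that of $\mathcal V(S_0,\dots,S_n)$ conditioned on $\{S_n=-1\}$, so it suffices to treat one of them and to bound the total variation distance between this conditional law and the law of $Z^{(n)}$. On the event $\{S_{n-1}\le 0\}$, whose probability tends to $1$ by \eqref{eq:cvCauchy}, we have $Z^{(n)}=\mathcal V(B_n)$, where $B_n$ is the discrete bridge of length $n$ with increments $(X_1,\dots,X_{n-1},R_n)$ and $R_n\coloneqq -1-S_{n-1}$. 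Since a cyclic rotation of a bridge leaves its Vervaat transform unchanged, $\mathcal V(B_n)=\mathcal V(\widehat B_n)$, where $\widehat B_n$ is $B_n$ rotated cyclically by an independent uniform amount; and a short computation --- using that a uniform cyclic rotation of an i.i.d.\ sequence is again i.i.d.\ --- identifies the increment sequence of $\widehat B_n$, in law, with $(Y_1,\dots,Y_n)$, where $J$ is uniform on $\{1,\dots,n\}$, the $(Y_i)_{i\neq J}$ are i.i.d.\ copies of $X$, and $Y_J\coloneqq -1-\sum_{i\neq J}Y_i$.

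Because $d_{\mathrm{TV}}$ cannot increase under the (deterministic) map $\mathcal V$, the theorem follows once one shows that
\[
d_{\mathrm{TV}}\Bigl(\mathrm{Law}\bigl((X_1,\dots,X_n)\mid S_n=-1\bigr),\ \mathrm{Law}(Y_1,\dots,Y_n)\Bigr)\ \xrightarrow[n\to\infty]{}\ 0 .
\]
Heuristically this says that, conditionally on the rare event $\{S_n=-1\}$ (recall that $S_n$ is typically of order $-b_n\to -\infty$), the walk looks like an i.i.d.\ walk into which a single atypically large increment, of size $\approx b_n$, has been spliced at a uniform location. To prove it I would pass to probability mass functions: writing $\nu_n$ and $\widetilde\nu_n$ for the two laws, one has $\widetilde\nu_n\ll\nu_n$ with
\[
\frac{\mathrm d\widetilde\nu_n}{\mathrm d\nu_n}(x_1,\dots,x_n)\ =\ \frac{\p(S_n=-1)}{n}\,\sum_{j=1}^n\frac{1}{\mu_{x_j+1}}
\qquad\text{on }\Bigl\{\textstyle\sum_i x_i=-1\Bigr\},
\]
and since this derivative has $\nu_n$-mean $1$, it suffices to show it converges to $1$ in $\nu_n$-probability.

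This rests on three ingredients. First, a sharp local large-deviation asymptotic $\p(S_n=-1)\sim n\,\mu_{b_n}$, reflecting that $\{S_n=-1\}$ is realised by a single jump of size close to $b_n$ together with an otherwise typical remainder; this uses both the local subexponentiality built into $\mu_n=L(n)/n^2$ and the regular variation of $(a_n)$, $(b_n)$ from \eqref{eq:defanbn}. Second, under $\nu_n$, with probability $1-o(1)$ there is exactly one coordinate of size $\Theta(b_n)$ and every other coordinate is $o(b_n)$: the key point is that the event that two coordinates both exceed $\delta b_n$ has probability $o(\p(S_n=-1))$, since each such coordinate further depletes the rare-event budget by a factor tending to $0$. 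Third, on that good event $\sum_j\mu_{x_j+1}^{-1}$ is dominated by the single term $\mu_{x_J+1}^{-1}\sim b_n^2/L(b_n)\sim n/\p(S_n=-1)$, so the Radon--Nikodym derivative above converges to $1$, as needed.

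The main obstacle is the first ingredient. In the regime $b_n/a_n\to\infty$ the asymptotics $\p(S_n=-1)\sim n\mu_{b_n}$ lie strictly beyond the reach of the ordinary local central limit theorem --- which would only give $a_n\,\p(S_n=-1)\to 0$ --- and must instead be extracted from a local large-deviation / one-big-jump analysis tailored to the local Cauchy hypothesis \eqref{eq:hypmu}; obtaining matching upper and lower bounds there, together with the precise tail and second-moment estimates underlying the other two ingredients, is where essentially all the effort lies. (This is, in effect, the analysis carried out in \cite{KR19}.)
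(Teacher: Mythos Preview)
The paper does not prove this statement; it is quoted verbatim as Theorem~21 of \cite{KR19} and used as a black box, so there is no in-paper argument to compare your proposal against.

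That said, your sketch is a faithful outline of the strategy carried out in \cite{KR19}. The reduction via cyclic invariance of the Vervaat transform, the identification of the law of $Z^{(n)}$ (on $\{S_{n-1}\le 0\}$) with that of a bridge built from $n-1$ i.i.d.\ steps plus one planted step at a uniform position, and the Radon--Nikodym formula are all correct. One point that deserves a word of care is your third ingredient: the claim that $\sum_j \mu_{x_j+1}^{-1}$ is dominated by the single big term is not automatic, since $1/\mu_{X+1}$ has infinite $\mu$-mean and the ``small'' terms are heavy-tailed; what saves you is that on the good event $\{x_j\le \eps b_n \text{ for all } j\ne J\}$ one has $\e\bigl[\sum_{j\ne J}\mu_{x_j+1}^{-1}\bigr]=O(n\eps b_n)$, and $n b_n = o\bigl(b_n^2/L(b_n)\bigr)$ because $nL(b_n)/b_n\sim L(b_n)/\ell_\star(a_n)\le L(b_n)/\ell_\star(b_n)\to 0$, so Markov's inequality finishes. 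As you yourself flag, the substantive work --- the local one-big-jump asymptotic $\p(S_n=-1)\sim n\mu_{b_n}$ and the fact that, under the bridge law, two increments exceed $\eps b_n$ only with probability $o(1)$ --- is exactly what \cite{KR19} supplies.
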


Let us mention that compared to  \cite{KR19}  we use a different convention concerning the definition of $b_{n}$: the two definitions differ by a factor of $-1$. 

The following result on the scaling limit of the exploration processes of $ \rtree_{  n}$ implies that, in probability, the processes have the following structure: they make a big jump of size $(1+o(1))b_n$ within time $o(n)$, and then go down to $-1$ linearly with fluctuations of size $o(b_n)$. 

\begin{prop}[Proposition 24 in \cite{KR19}]\label{prop:functional_conv_W}
\label{prop:W}
For $\ast \in \{\mathsf{lex},\mathsf{bfs}\}$, extend the exploration process $\mathsf{W}^{\ast}(\rtree_{  n})$ to $[-n,n]$ by setting $\mathsf{W}^{\ast}_{-i}(\rtree_{  n})=0$ for $i=1,\dots, n$. Then for  $\ast \in \{\mathsf{lex},\mathsf{bfs}\}$ it holds that 
\[ 
\left(  \frac{\mathsf{W}^{\ast}_{\lfloor n t \rfloor}(\rtree_{  n})}{b_n}, -1\leq t \leq 1 \right) \overset{d}{\to} \left((1-t)\I{t\geq 0},-1\leq t \leq 1 \right)
\]
in $\D([-1,1],\R)$ as $n\to \infty$.
\end{prop}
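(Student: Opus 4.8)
The plan is to transport the statement to the Vervaat picture via Theorem~\ref{thm:dTVLoc} and then analyse the bridge $(S_0,\dots,S_{n-1},-1)$ by hand. Fix $\ast\in\{\mathsf{lex},\mathsf{bfs}\}$ and extend $Z^{(n)}$ to $\{-n,\dots,n\}$ by $Z^{(n)}_{-i}=0$ as well. Since the limit is a fixed (deterministic) element of $\D([-1,1],\R)$, convergence in distribution is equivalent to convergence in probability; and since $d_{\mathrm{TV}}(\mathsf{W}^{\ast}(\rtree_n),Z^{(n)})\to 0$ while the rescaled extended path is a fixed measurable function of the exploration process, it suffices to prove
\[
\bigl(Z^{(n)}_{\lfloor nt\rfloor}/b_n,\ -1\le t\le 1\bigr)\ \convprob\ \bigl((1-t)\I{t\ge 0},\ -1\le t\le 1\bigr)\quad\text{in }\D([-1,1],\R).
\]
Moreover $\p(S_{n-1}\le 0)\to1$ by \eqref{eq:cvCauchy}, so we may and do work on this event, on which $Z^{(n)}=\mathcal{V}(\mathbf{s}^{(n)})$ with $\mathbf{s}^{(n)}=(S_0,\dots,S_{n-1},-1)$; the increments of $\mathbf{s}^{(n)}$ are $X_1,\dots,X_{n-1}$ together with the closing increment $x_n=-1-S_{n-1}$.

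The heart of the matter is the following \emph{linear-descent estimate} for the unconditioned walk:
\[
\sup_{0\le t\le 1}\Bigl|\,S_{\lfloor nt\rfloor}/b_n+t\,\Bigr|\ \convprob\ 0 .
\]
To establish it I would combine: (i) the one-dimensional convergence \eqref{eq:cvCauchy}, applied with $\lfloor nt\rfloor$ in place of $n$, which gives $(S_{\lfloor nt\rfloor}+b_{\lfloor nt\rfloor})/a_{\lfloor nt\rfloor}=O_{\p}(1)$, hence $=o_{\p}(b_n)$ since $a_{\lfloor nt\rfloor}\le a_n=o(b_n)$; (ii) the fact that $(b_n)$ is regularly varying of index $1$, so by the uniform convergence theorem for regularly varying sequences (Potter bounds handling $t$ near $0$) one has $b_{\lfloor nt\rfloor}/b_n\to t$ uniformly on $[0,1]$; (iii) an upgrade of (i) to uniformity in $t$, obtained either from the functional limit theorem in the Cauchy domain of attraction — $\bigl((S_{\lfloor nt\rfloor}+b_{\lfloor nt\rfloor})/a_n,\,0\le t\le1\bigr)$ converges in $\D$ to a drifted Cauchy-type Lévy process, whose supremum on $[0,1]$ is a.s.\ finite, whence $\sup_{0\le t\le1}|S_{\lfloor nt\rfloor}+b_{\lfloor nt\rfloor}|=O_{\p}(a_n)=o_{\p}(b_n)$ — or from a direct maximal inequality after truncating the $X_i$ at level $a_n$ (here $\E{|X|}<\infty$ because $\mu$ has finite mean). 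Granting the descent estimate, the following hold in probability: the closing increment satisfies $x_n=-1-S_{n-1}=(1+o_{\p}(1))b_n$; the overall minimum of $\mathbf{s}^{(n)}$ equals $\min_{0\le i\le n-1}S_i=-(1+o_{\p}(1))b_n\ll -1$; its first location $m=m(\mathbf{s}^{(n)})$ satisfies $m/n\to1$ (no index $j\le(1-\eps)n$ can be the argmin, since $S_j/b_n\approx -j/n\ge -(1-\eps)>-1\approx S_{n-1}/b_n$); and consequently the oscillation $\max_{m\le j\le n-1}S_j-S_m$ of $(S_j)$ over the short window $[m,n-1]$ is $o_{\p}(b_n)$.

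Now transfer through the Vervaat map. Reading the increments of $\mathbf{s}^{(n)}$ cyclically from the argmin, $Z^{(n)}$ first runs over the $n-m=o_{\p}(n)$ increments of $\mathbf{s}^{(n)}$ lying after time $m$ — staying below height $\max_{m\le j\le n-1}S_j-S_m=o_{\p}(b_n)$ until, at step $n-m$, the single increment $x_n=(1+o_{\p}(1))b_n$ lifts it to height $-1-S_m=(1+o_{\p}(1))b_n$ — and then, for $0\le j\le m$, it traces $Z^{(n)}_{n-m+j}=(-1-S_m)+S_j$, a copy of $(S_j)_{0\le j\le m}$ lifted by $-1-S_m$. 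Combining $(n-m)/n\to0$, $(-1-S_m)/b_n\to1$ and the descent estimate gives $\sup_{(n-m)/n\le t\le 1}|Z^{(n)}_{\lfloor nt\rfloor}/b_n-(1-t)|\to0$ in probability, while $Z^{(n)}_{\lfloor nt\rfloor}/b_n=0$ for $t\in[-1,0)$ and $\sup_{0\le t<(n-m)/n}Z^{(n)}_{\lfloor nt\rfloor}/b_n=o_{\p}(1)$. Thus the entire rise from $0$ to $(1+o_{\p}(1))b_n$ occurs inside a window of rescaled length $(n-m)/n=o_{\p}(1)$ about the origin, over which $Z^{(n)}$ makes one up-jump and is otherwise $o_{\p}(b_n)$. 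Taking the Skorokhod time-change $\lambda_n$ that sends $0$ to $(n-m)/n$ and is affine on $[-1,0]$ and on $[0,1]$ (so $\|\lambda_n-\mathrm{id}\|_\infty\le (n-m)/n\to0$) then yields $\sup_{-1\le t\le1}\bigl|Z^{(n)}_{\lfloor n\lambda_n(t)\rfloor}/b_n-(1-t)\I{t\ge0}\bigr|\to0$ in probability, i.e.\ the claimed convergence in $\D([-1,1],\R)$.

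The main obstacle is ingredient (iii) of the descent estimate: upgrading \eqref{eq:cvCauchy} to a uniform-in-$t$ statement at the scale $o(b_n)$ is delicate in the Cauchy regime, because the natural centering $b_{\lfloor nt\rfloor}$ is genuinely nonlinear in $t$ and the increments are only integrable, so neither an $L^2$ maximal inequality nor the clean $\alpha\in(1,2)$ functional CLT applies off the shelf; one must either invoke the classical but delicately-normalised $\alpha=1$ functional limit theorem, or run a careful truncation argument. By comparison, the Vervaat bookkeeping and the Skorokhod-topology treatment of the jump at $t=0$ are routine.
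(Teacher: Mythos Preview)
The paper does not prove this proposition at all: it is quoted verbatim as Proposition~24 of \cite{KR19} and used as a black box. Consequently there is no ``paper's proof'' to compare your attempt against.

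That said, your outline is a sound reconstruction of how such a statement is established. The reduction to $Z^{(n)}$ via Theorem~\ref{thm:dTVLoc} is legitimate (total variation convergence of the discrete paths transfers convergence of any measurable functional, and in \cite{KR19} Theorem~21 precedes Proposition~24, so there is no circularity). The linear--descent estimate $\sup_{0\le t\le 1}|S_{\lfloor nt\rfloor}/b_n+t|\to 0$ in probability is indeed the crux, and the subsequent Vervaat bookkeeping together with the piecewise-affine time change is routine.

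One refinement is worth flagging. In ingredient~(iii) you invoke a functional limit theorem with centering $b_{\lfloor nt\rfloor}$; the standard functional stable CLT in the Cauchy domain instead uses the \emph{linear} centering $\lfloor nt\rfloor\, b_n/n$, i.e.\ it asserts that $\bigl((S_{\lfloor nt\rfloor}+\lfloor nt\rfloor\, b_n/n)/a_n,\,0\le t\le 1\bigr)$ converges in $\D([0,1])$ to an asymmetric Cauchy L\'evy process. The two centerings need not agree to order $a_n$ (since $b_m/m$ is merely slowly varying), so your phrasing is not quite the off-the-shelf statement. This does not damage the conclusion, however: the standard version already gives $\sup_{0\le t\le 1}|S_{\lfloor nt\rfloor}+\lfloor nt\rfloor\, b_n/n|=O_{\p}(a_n)=o_{\p}(b_n)$, and $\sup_t|\lfloor nt\rfloor\, b_n/n-t b_n|\le b_n/n\to 0$, which combine to yield your descent estimate directly. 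With that correction, steps~(i)--(ii) become unnecessary and the obstacle you identify disappears.
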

It might seem artificial to extend the process to a process on $[-1,1]$, but an extension to some interval $[c,1]$ with $c <0$ is required to obtain convergence in the Skorokhod topology: indeed, the limit process has a jump instantaneously at time 0, while in $\mathsf{W}(\rtree_{  n})$ the macroscopic jump occurs at a strictly positive time.

\begin{cor}\label{cor:width}
Denote by $\mathsf{Width}(\rtree_{n})$ the width of $ \rtree_{  n}$. Then the convergence
\[\frac{\mathsf{Width}(\rtree_{n})}{b_n} \quad \xrightarrow[n\to\infty]{} \quad 1\]
holds in probability.
\end{cor}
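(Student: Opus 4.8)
The plan is to sandwich $\mathsf{Width}(\rtree_n)$ between $\max_i\W^{\mathsf{bfs}}_i(\rtree_n)$ and essentially the same quantity, using only Proposition~\ref{prop:W} together with the deterministic relation between the breadth-first queue process and the generation sizes.

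For the upper bound I would recall from Section~\ref{sssec:coding} (cf.~\eqref{eq:width}) that $\mathsf{Width}(\rtree_n)\le 1+\max_{0\le i\le n}\W^{\mathsf{bfs}}_i(\rtree_n)$. Since $\W^{\mathsf{bfs}}(\rtree_n)\ge 0$ on $\{0,\dots,n-1\}$ and has been set to $0$ on negative times, this maximum equals $\sup_{t\in[-1,1]}\W^{\mathsf{bfs}}_{\lfloor nt\rfloor}(\rtree_n)$. The supremum functional is continuous on $\D([-1,1],\R)$, so Proposition~\ref{prop:W} and the continuous mapping theorem give $\max_{0\le i\le n}\W^{\mathsf{bfs}}_i(\rtree_n)/b_n\to\sup_{t\in[-1,1]}(1-t)\I{t\ge 0}=1$ in distribution, hence in probability, and therefore $\limsup_n\mathsf{Width}(\rtree_n)/b_n\le 1$ in probability.

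For the lower bound I would first translate to generation sizes. Write $g_\ell$ for the size of generation $\ell$ of $\rtree_n$ and $t_\ell=\sum_{m<\ell}g_m$. Evaluating the deterministic fact of Section~\ref{sssec:coding} at the moment all vertices of generations $0,\dots,\ell-1$ have been processed in breadth-first order (at which point the queue consists exactly of the generation-$\ell$ vertices) yields $g_\ell=\W^{\mathsf{bfs}}_{t_\ell}(\rtree_n)+1$ and $t_{\ell+1}=t_\ell+\W^{\mathsf{bfs}}_{t_\ell}(\rtree_n)+1$, whence $\mathsf{Width}(\rtree_n)=\max_{\ell\ge 0}\bigl(\W^{\mathsf{bfs}}_{t_\ell}(\rtree_n)+1\bigr)$. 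I also record that $b_n=o(n)$: since $\mu$ is critical, $\E{X^+}=\sum_{k\ge 1}k\mu_{k+1}<\infty$, so $b_n/n=\E{X\I{|X|>a_n}}=\E{X\I{X>a_n}}\to 0$. Now fix $\eps\in(0,\tfrac12)$. The limit $t\mapsto(1-t)\I{t\ge 0}$ is continuous on a neighbourhood of $[\eps,2\eps]$, so Proposition~\ref{prop:W} upgrades to uniform convergence there; together with the upper bound this shows that with probability tending to $1$,
\[
\min_{\eps n\le j\le 2\eps n}\W^{\mathsf{bfs}}_j(\rtree_n)\ \ge\ (1-3\eps)\,b_n
\qquad\text{and}\qquad
\max_{0\le j\le n}\W^{\mathsf{bfs}}_j(\rtree_n)\ \le\ 2b_n .
\]
On this event every generation has at most $2b_n+1<\eps n$ vertices (using $b_n=o(n)$ and $n$ large), so the strictly increasing sequence $0=t_0<t_1<\cdots$, which reaches $n$, has some term $t_{\ell_n}\in[\eps n,2\eps n)$, and then
\[
\mathsf{Width}(\rtree_n)\ \ge\ g_{\ell_n}\ =\ \W^{\mathsf{bfs}}_{t_{\ell_n}}(\rtree_n)+1\ \ge\ (1-3\eps)\,b_n .
\]
Letting $\eps\downarrow 0$ yields $\mathsf{Width}(\rtree_n)/b_n\to 1$ in probability.

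The main obstacle is obtaining the sharp constant $1$ in the lower bound. Soft inputs do not suffice: the bound $\Delta(\rtree_n)\le\mathsf{Width}(\rtree_n)$ is useless here, because the condensation phenomenon forces the largest degree to be only $\Theta_{\p}(a_n)=o(b_n)$; and the deterministic bound only gives $\mathsf{Width}(\rtree_n)\ge\tfrac12\max_i\W^{\mathsf{bfs}}_i(\rtree_n)$ in general, since within one generation the queue can be almost twice the generation size (a priori one could fear two consecutive generations each of size $\sim b_n/2$, or a queue that is heavily front-loaded inside a single generation). Ruling this out is exactly where two quantitative features of Proposition~\ref{prop:W} are used: after its macroscopic jump the exploration process decreases at macroscopic slope $O(b_n/n)=o(1)$, so $\W^{\mathsf{bfs}}$ stays within $o(b_n)$ of its maximum over the whole window $[\eps n,2\eps n]$; and $b_n=o(n)$, so the gaps $t_{\ell+1}-t_\ell$ are $o(n)$ and some generation boundary is forced to land inside that window.
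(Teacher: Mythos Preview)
Your proof is correct, but the lower-bound argument is more elaborate than necessary because of a factual error in your final paragraph. You assert that ``the condensation phenomenon forces the largest degree to be only $\Theta_{\p}(a_n)=o(b_n)$'', and on that basis dismiss the bound $\Delta(\rtree_n)\le\mathsf{Width}(\rtree_n)$ as useless. This is wrong: under \eqref{eq:hypmu} the condensation vertex has degree $(1+o(1))b_n$, not $\Theta(a_n)$. The text accompanying Proposition~\ref{prop:W} says this explicitly (the process ``make[s] a big jump of size $(1+o(1))b_n$''), and that jump is exactly $\Delta(\rtree_n)-1$. Formally, the maximal-jump functional $f\mapsto\sup_t\bigl(f(t)-f(t-)\bigr)$ is continuous on $\D([-1,1],\R)$ in the $J_1$ topology, so Proposition~\ref{prop:W} yields $\Delta(\rtree_n)/b_n\to 1$ directly.

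The paper's proof is therefore just the sandwich \eqref{eq:width}, namely $\Delta(\rtree_n)\le\mathsf{Width}(\rtree_n)\le\max_i\W^{\mathsf{bfs}}_i(\rtree_n)$, together with the two convergences $\Delta(\rtree_n)/b_n\to 1$ and $\max_i\W^{\mathsf{bfs}}_i(\rtree_n)/b_n\to 1$, both immediate from Proposition~\ref{prop:W}. Your generation-boundary argument (locating some $t_{\ell_n}\in[\eps n,2\eps n]$ using that gaps $t_{\ell+1}-t_\ell\le 2b_n+1=o(n)$) is valid and would be the right tool in a setting where the maximal degree is genuinely smaller than the width, but here it is superfluous.
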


Corollary \ref{cor:width} readily follows from \eqref{eq:width}, since by Proposition \ref{prop:W} we have the following convergence in probability:
\[\frac{\max\Delta\W^{\mathsf{bfs}}(\rtree_{n}) }{ b_{n} }  \quad \xrightarrow[n\to\infty]{} \quad 1, \qquad \frac{\max \W^{\mathsf{bfs}}(\rtree_{n})}{ b_{n} } \quad \xrightarrow[n\to\infty]{} \quad 1.\]

We now turn to the study of $\mathsf{Height}(\rtree_{n})$. From here on we will sometimes write $\mathsf{H}(\cdot)=\mathsf{Height}(\cdot)$ to make equations more readable.

\subsection{Reduction to a forest}
\label{ssec:forest}
Recall from Sec.~\ref{ssec:luka} the construction 
\[
Z^{(n)} \coloneqq \mathcal{V}(S_0,S_1,\ldots,S_{n-1},-1)
\]
when $S_{n-1} \le 0$ and $Z^{(n)}=(0,0, \ldots,-1)$ otherwise.
We denote by $ \rtree'_{n}$ the tree whose {\L}ukasiewicz path is $Z^{(n)}$
and by $v'_n$ the (lexicographically least) vertex of maximal degree in $\rtree'_{  n}$. Observe that by Proposition \ref{prop:functional_conv_W} and since $S_{n}/b_{n} \rightarrow -1$ in probability,
it holds with probability tending to $1$ as $n \rightarrow \infty$ that this vertex is unique and has degree $\Delta(\rtree'_{  n})= \vert S_{n-1} \vert$, and that $S_{n-1}<-1$.  In addition, Theorem~\ref{thm:dTVLoc} implies that $d_{\mathrm{TV}}(\rtree_{n},\rtree'_{n}) \rightarrow 0$.
 
We now introduce a decomposition of the walk $(S_i : i \geq 0)$ into excursions above its infimum. Set $\underline{S}_{k} = \min \{S_{0}, S_{1}, \ldots,S_{k}\}$ for every $k \geq 0$, let $\zeta_{k}=\inf\{i\geq 0 : S_i=-k\}$  for every $k\geq 0$, and define the excursions
 \[
 \left( \mathcal{W}^{(k)}_i : 0 \leq i \leq \zeta_{k}-\zeta_{k-1} \right) \coloneqq \left( k-1+S_{\zeta_{k-1}+i} : 0 \leq i \leq \zeta_{k}-\zeta_{k-1} \right), \quad k\geq 1.
 \] For every $k\geq 1$, we let $\tau_k$ be the tree whose {\L}ukasiewicz path is $\mathcal{W}^{(k)}$. Observe that the trees $(\tau_k)_{k \ge 1}$ are independent and $T(\mu)$-distributed. Moreover, when $S_{n-1} \leq 0$, for every $1\leq k \leq \vert S_{n-1}\vert$, $\tau_k$ is  the subtree rooted at the $k$'th child of $v'_n$ in $\rtree'_{n}$. Denote by $ \mathcal{F}^{n}_{\mathrm{L}}$ (resp.\ $ \mathcal{F}^{n}_{\mathrm{R}}$) the forest of trees rooted at the children of $\llbracket \varnothing, v'_{n} \llbracket$ and which are on the left (resp.\ right) of $\llbracket \varnothing, v'_{n} \llbracket$. Observe also that  $ \mathcal{F}^{n}_{\mathrm{R}} = \{ \tau_k : |S_{n-1}|<k \leq |\underline{S}_{n-1}| \} $ when $S_{n-1} \leq 0$. 

\begin{figure}[h!]
  \centering
  \includegraphics[width=\linewidth]{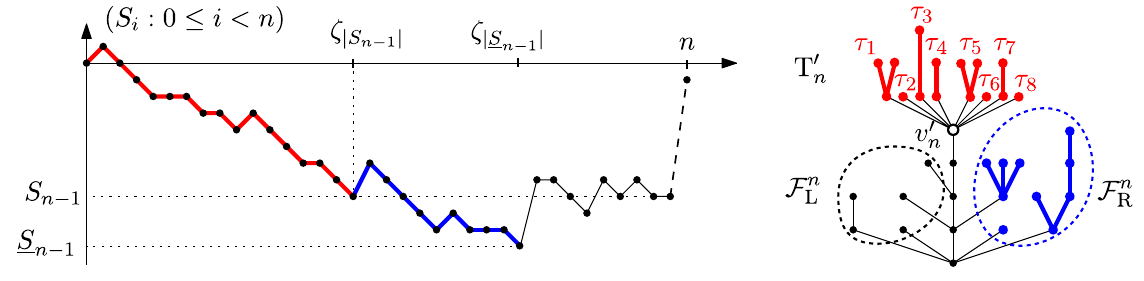}
\caption{The random walk $(S_i : 0\leq i < n)$ and the associated tree $\rtree'_{  n}$. In red, the $\vert S_{n-1} \vert $ first excursions of $S$ that encode the trees grafted above the vertex with maximal degree $v'_n$. In blue, the forest $\mathcal{F}^{n}_{\mathrm{R}}$ made of the trees grafted on the children of $\llbracket \varnothing, v'_{n}\llbracket$ on the right of $\llbracket \varnothing, v'_{n}\llbracket$. In this example, $\mathsf{H}(\mathcal{F}^{n}_{\mathrm{L}})=2$ and $\mathsf{H}(\mathcal{F}^{n}_{\mathrm{R}})=4$.}
\label{fig:tprimen}
\end{figure}

It is an immediate consequence of the above definitions that when $S_{n-1} \leq 0$,
\begin{equation}
\label{eq:boundH}
\max_{1 \leq i \leq |S_{n-1}|}\mathsf{H}(\tau_{i}) \leq \mathsf{H}(\rtree'_{  n}) \leq  \|v'_n\|+ \max\left(\max_{1 \leq i \leq |S_{n-1}|}\mathsf{H}(\tau_{i}),\mathsf{H}(\mathcal{F}^{n}_{\mathrm{L}}),\mathsf{H}(\mathcal{F}^{n}_{\mathrm{R}})\right),
\end{equation}
where by definition the height of a forest is the maximal height of a tree in this forest (see Fig.~\ref{fig:tprimen} for an example).

\subsection{Technical estimates}
\label{ssec:estimates}
We give an asymptotic equivalent  of the quantity $h_{n}$ defined by \eqref{eq:hn} in terms of another slowly varying function involved in the generating function of $\mu$. This alternative form will be useful for the proof of Theorem \ref{thm:heightCauchy0}. 

 Let $G_\mu(t)=\sum_{k\geq 0}t^k\mu_k$ be the probability generating function of $\mu$. Under the assumption \eqref{eq:hypmu}, we may apply Karamata's Abelian theorem \cite[Theorem 8.1.6]{BGT89} (in the notation of the latter reference, we take $n=\alpha=1$, $\beta=0$ and $f_{1}(s)=G_{\mu}(e^{-s})-1+s$) and write $G_{\mu}$ in the form
\begin{equation}
\label{eq:Gmu}
G_{\mu}(s)=s+(1-s) \ell_{\star} \left(\frac{1}{1-s}\right),
\end{equation}
 with $\ell_{\star}$ a slowly varying function. Observe that $\ell_{\star}(x) \rightarrow 0$ as $ x \rightarrow \infty$. For $x>0$ set
 \[ V(y)=\int_{1}^{y} \frac{\mathrm{d} x}{x \ell_{\star}(x)},\]
 which is slowly varying by \cite[Proposition 1.5.9a]{BGT89}. 
 
\begin{lem}
\label{lem:equiv}
If $\mu$ satisfies \eqref{eq:hypmu} and $Y$ is $\mu$-distributed then 
\[
  b_{n}    \quad \sim \quad  n \E{Y\I{Y \ge a_n}} \qquad \textrm{ and }\qquad h_{n}  \quad \sim \quad V( b_{n} ).
\]
\end{lem}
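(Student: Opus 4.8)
The plan is to treat the two equivalences separately: the first, $b_n\sim n\E{Y\I{Y\ge a_n}}$, is elementary, while the second, $h_n\sim V(b_n)$, rests on the comparison $\ell_\star(x)\sim \E{Y\I{Y\ge x}}$ as $x\to\infty$ together with some bookkeeping with slowly varying functions. Throughout write $m(x)\coloneqq\E{Y\I{Y\ge x}}=\sum_{k\ge x}k\mu_k=\sum_{k\ge x}L(k)/k$. I first record the Karamata facts used repeatedly: since $\E{Y}=1<\infty$ the series defining $m$ converges, $m$ is slowly varying and $m(x)\to 0$; writing $m(x)-m(2x)=\sum_{x\le k<2x}L(k)/k\gtrsim\inf_{[x,2x)}L\sim L(x)$ while $m(x)-m(2x)=o(m(x))$ by slow variation gives $L(x)=o(m(x))$; one has $\p(Y\ge x)\le m(x)/x$; $xm(x)\to\infty$ since $x^{1/2}m(x)$ is regularly varying of positive index, so $1/x=o(m(x))$; and $\sum_{k\le x}L(k)\sim xL(x)$ by Karamata on partial sums of a slowly varying sequence. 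Recall also, from after \eqref{eq:defanbn}, that $a_n\to\infty$ and that $a_n,b_n$ are regularly varying of index $1$.

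For the first equivalence: since $\mu$ is critical, $\E{X}=0$ and $b_n=n\E{X\I{|X|>a_n}}$; as $X\ge -1$ and $a_n\to\infty$, for large $n$ one has $\{|X|>a_n\}=\{X>a_n\}$, so $b_n=n\E{X\I{X>a_n}}=n\bigl(\E{Y\I{Y>a_n+1}}-\p(Y>a_n+1)\bigr)$. Here $\E{Y\I{Y>a_n+1}}$ is $m$ evaluated at $a_n+O(1)$, hence $\sim m(a_n)$ by slow variation, while $\p(Y>a_n+1)\le m(a_n)/a_n=o(m(a_n))$. Thus $b_n\sim n\,m(a_n)=n\E{Y\I{Y\ge a_n}}$.

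The key step is $\ell_\star(x)\sim m(x)$. Set $s=e^{-t}$ in \eqref{eq:Gmu}:
\[
G_\mu(e^{-t})-1+t=(e^{-t}-1+t)+(1-e^{-t})\,\ell_\star\!\bigl(\tfrac{1}{1-e^{-t}}\bigr).
\]
On the other hand, using $\E{Y}=1$, $G_\mu(e^{-t})-1+t=\E{e^{-tY}-1+tY}$; from the identity $e^{-tk}-1+tk=\int_0^t(t-r)k^2e^{-rk}\,\mathrm{d}r$ and Fubini, $\E{e^{-tY}-1+tY}=\int_0^t(t-r)\,\E{Y^2e^{-rY}}\,\mathrm{d}r$, and since $k^2\mu_k=L(k)$ we get $\E{Y^2e^{-rY}}=\sum_{k\ge 1}L(k)e^{-rk}\sim L(1/r)/r$ as $r\to0^+$ by Karamata's Abelian theorem and $\sum_{k\le x}L(k)\sim xL(x)$. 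Substituting $r=1/y$ gives $\int_0^t(t-r)\tfrac{L(1/r)}{r}\mathrm{d}r=t\!\int_0^t\tfrac{L(1/r)}{r}\mathrm{d}r-\int_0^tL(1/r)\,\mathrm{d}r\sim t\,m(1/t)-t\,L(1/t)\sim t\,m(1/t)$ (using $L=o(m)$), and the uniformity of $\E{Y^2e^{-rY}}\sim L(1/r)/r$ on $(0,t)$ then yields $G_\mu(e^{-t})-1+t\sim t\,m(1/t)$. Comparing with the display, using $e^{-t}-1+t=O(t^2)=o(t\,m(1/t))$, $1-e^{-t}\sim t$ and slow variation of $\ell_\star$, and dividing by $t$, we obtain $\ell_\star(1/t)(1+o(1))+O(t)\sim m(1/t)$; since $t=o(m(1/t))$ this gives $\ell_\star(1/t)\sim m(1/t)$, i.e.\ $\ell_\star(x)\sim m(x)$.

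To conclude $h_n\sim V(b_n)$, write $c_n\coloneqq n\E{Y\I{Y\ge a_n}}$, so $h_n=\int_1^{c_n}\tfrac{\mathrm{d}x}{x\,m(x)}$ and $c_n\sim b_n$ by the first part. Since $m(x)\to0$, $\int_1^y\tfrac{\mathrm{d}x}{x\,m(x)}\to\infty$ and in fact $\int_{y/2}^y\tfrac{\mathrm{d}x}{x\,m(x)}\gtrsim 1/m(y)$ by slow variation, whereas $\bigl|\int_{b_n}^{c_n}\tfrac{\mathrm{d}x}{x\,m(x)}\bigr|\lesssim|\ln(c_n/b_n)|/m(b_n)=o(1/m(b_n))$; hence $h_n\sim\int_1^{b_n}\tfrac{\mathrm{d}x}{x\,m(x)}$. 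Finally, since $\ell_\star(x)\sim m(x)$ with both sides positive and the integral divergent, asymptotically equivalent integrands give asymptotically equivalent divergent integrals, so $V(b_n)=\int_1^{b_n}\tfrac{\mathrm{d}x}{x\,\ell_\star(x)}\sim\int_1^{b_n}\tfrac{\mathrm{d}x}{x\,m(x)}\sim h_n$. The main obstacle is the comparison $\ell_\star(x)\sim m(x)$, and within it the genuinely delicate point: the truncated second moment $\sum_{k\le x}k^2\mu_k=\sum_{k\le x}L(k)\sim xL(x)$ contributes the term $t\,L(1/t)$, which must be negligible against $t\,m(1/t)$ — this works precisely because $L=o(m)$, which is where hypothesis \eqref{eq:hypmu} (the ``local Cauchy'', $\alpha=1$, condition) is essential; the remaining Karamata/slow-variation estimates and the ``equivalent integrands give equivalent divergent integrals'' fact are routine.
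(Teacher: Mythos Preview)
Your proof is correct and follows the same overall line as the paper's: both handle $b_n\sim n\,m(a_n)$ by the same elementary manipulation, and both reduce $h_n\sim V(b_n)$ to the key asymptotic $\ell_\star(x)\sim m(x)=\E{Y\I{Y\ge x}}$. The substantive difference lies in how that key asymptotic is obtained. The paper simply invokes Karamata's Abelian theorem \cite[Theorem 8.1.6]{BGT89} (with $n=\alpha=1$) to read off \eqref{eq:ellstar} directly; you instead expand $G_\mu(e^{-t})-1+t=\int_0^t(t-r)\E{Y^2e^{-rY}}\,\mathrm{d}r$ and apply the more elementary Abelian fact $\sum L(k)e^{-rk}\sim L(1/r)/r$, then carefully show the $tL(1/t)$ term is negligible via $L=o(m)$. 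Your approach is more self-contained and makes explicit where the ``local Cauchy'' hypothesis enters, at the cost of length. For the final bookkeeping the paper is also slightly cleaner: rather than estimating $\int_{b_n}^{c_n}\tfrac{\mathrm{d}x}{x\,m(x)}$ directly, it observes that $V$ is slowly varying (again citing \cite{BGT89}) so that $V(b_n)\sim V(c_n)$ immediately.
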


\begin{proof}
First, recall that $b_{n}=n \E{X \mathbbm{1}_{|X| >  a_{n}}}$. Since $X$ and $Y-1$ have the same law, we have 
$b_{n}=n \E{ Y\mathbbm{1}_{|Y| \geq   a_{n}+2}}-n \p(Y \geq   a_{n}+2)$.  By Karamata's Abelian theorem \cite[Theorem 8.1.6]{BGT89}, we have
\begin{equation}
\label{eq:ellstar}\E{Y \I{Y \ge n}} = \sum_{k=n}^{\infty} k \mu_k \qquad \sim \qquad \ell_{\star}(n),
\end{equation}
In particular  $\E{Y \I{Y \ge n}}$ is slowly varying, which implies $ \E{ Y\mathbbm{1}_{|Y| \geq   a_{n}+2}} \sim \E{ Y\mathbbm{1}_{|Y| \geq   a_{n}}}$.  Also, since $\p(Y \geq n) \sim L(n)/n$, we have $n \p(Y \geq   a_{n}+2) \sim n\p(Y \geq a_{n}) \rightarrow 1$. This shows that $ b_{n}  \sim  n \E{Y\I{Y \ge a_n}}$. Observe that this also shows that $b_{n} \sim n \ell_{\star}(a_{n})$.

For the second statement,  by \eqref{eq:ellstar} observe that $\ell_{\star}(x) \sim \E{Y \I{Y \ge x}}$ as $x \rightarrow \infty$. In particular, $\ell_\star(x)\to 0$ as $x\to\infty$ so $V(y)\to \infty$ as $y\to \infty$. Therefore,
\[
V(y) \quad \sim  \quad \int_1^{y} \frac{1}{ x \E{Y \I{Y \ge x}}} \mathrm{d}x\, 
\]
as $y \rightarrow \infty$. Since $V$ is slowly varying, by the first  statement of the Lemma we get that $V(b_{n}) \sim V(n \E{Y\I{Y \ge a_n}})$ and the desired result follows.
\end{proof}

\begin{rk}
\label{rk:anbn}
From the definition of $a_{n}$ we have $n L(a_{n}) \sim a_{n}$, which implies that $(a_{n})$ is regularly varying of index $1$. In addition, we have seen in the previous proof that $b_{n} \sim  n \ell_{\star}(a_{n})$, which implies that $b_{n}$ is also regularly varying of index $1$.  Finally,  $a_{n}/b_{n} \sim L(a_{n})/\ell_{\star}(a_{n}) \rightarrow 0$ because $\ell_{\star}(n) \sim \sum_{k=n}^{\infty} \frac{L(k)}{k} $  implies $L(n)=o(\ell_{\star}(n))$ by \cite[Proposition 1.5.9b]{BGT89}.
\end{rk}

Observe that the convergence of the width stated in Theorem \ref{thm:heightCauchy0} readily follows from Corollary \ref{cor:width} and Lemma \ref{lem:equiv}. For the convergence of the height, there is still some work remaining.

\subsection{The height of a forest of  Cauchy--Bienaym\'e trees}

To simplify notation, denote by $\mathcal{H}$ the height of an unconditioned $\mu$-Bienaym\'e tree. Set
\[Q_{n}=\P(\mathcal{H} \geq n).\]
Our main estimate is the following.
\begin{lem}
\label{lem:cvproba} For every $\varepsilon>0$ we have, as $n \rightarrow \infty$,
\[
 b_{n}  Q_{(1+\varepsilon)h_{n}}  \quad \longrightarrow\quad 0 \qquad \textrm{and} \qquad    b_{n}  Q_{(1-\varepsilon)h_{n}}  \quad \longrightarrow\quad \infty.
\]
\end{lem}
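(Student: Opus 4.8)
The plan is to understand the asymptotics of $Q_n = \p(\mathcal{H}\ge n)$ directly and compare them with $h_n$. The starting point is the classical recursion $Q_{n+1} = 1 - G_\mu(1-Q_n)$ together with $Q_0 = 1$ (or rather $\p(\mathcal H \ge 1) = 1-\mu_0$; the exact initial value does not matter for the asymptotics). Writing $G_\mu(s) = s + (1-s)\ell_\star(1/(1-s))$ from \eqref{eq:Gmu}, this recursion becomes
\begin{equation}
\label{eq:Qrec}
Q_{n+1} = Q_n - Q_n\, \ell_\star\!\left(\tfrac{1}{Q_n}\right),
\end{equation}
so $(Q_n)$ is decreasing to $0$ and $Q_n - Q_{n+1} = Q_n\,\ell_\star(1/Q_n)$ is small compared with $Q_n$ (since $\ell_\star(x)\to 0$). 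This is the standard setup for a Seneta--Heathcote-type analysis, and the first step is to turn the recursion \eqref{eq:Qrec} into an integral asymptotic: comparing sums with integrals, one expects
\[
n \;\sim\; \int_{Q_n}^{1} \frac{\mathrm{d}u}{u\,\ell_\star(1/u)} \;=\; \int_{1}^{1/Q_n}\frac{\mathrm{d}v}{v\,\ell_\star(v)} \;=\; V\!\left(\tfrac{1}{Q_n}\right),
\]
using the substitution $v = 1/u$ and the definition $V(y) = \int_1^y \mathrm{d}x/(x\ell_\star(x))$ from Section~\ref{ssec:estimates}. In other words, $V(1/Q_n)\sim n$, i.e.\ $1/Q_n$ is (asymptotically) the inverse function of $V$ evaluated at $n$. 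The monotonicity of $V$ and of $Q_n$ lets us invert this cleanly: for any $\varepsilon>0$, $V(1/Q_{(1\pm\varepsilon)h_n})\sim (1\pm\varepsilon)h_n$.

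The second step is to combine this with Lemma~\ref{lem:equiv}, which gives $h_n \sim V(b_n)$. Since $V$ is slowly varying and increasing, $V^{-1}$ (the asymptotic inverse) is \emph{rapidly} varying, which is exactly what makes the two conclusions of Lemma~\ref{lem:cvproba} hold with room to spare. Concretely, from $V(1/Q_{(1+\varepsilon)h_n}) \sim (1+\varepsilon)h_n \sim (1+\varepsilon)V(b_n)$ and the fact that $V$ is slowly varying, one deduces $1/Q_{(1+\varepsilon)h_n}$ grows faster than any power of $b_n$; in particular $b_n Q_{(1+\varepsilon)h_n}\to 0$. The argument for $b_n Q_{(1-\varepsilon)h_n}\to\infty$ is symmetric: $V(1/Q_{(1-\varepsilon)h_n})\sim (1-\varepsilon)V(b_n)$ forces $1/Q_{(1-\varepsilon)h_n} = o(b_n^{\delta})$ for every $\delta>0$, hence $b_n Q_{(1-\varepsilon)h_n}\to\infty$. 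The key quantitative input is the representation $V(y) = \int_1^y \mathrm{d}x/(x\ell_\star(x))$ with $\ell_\star(x)\to 0$: this means $V(cy)/V(y)\to 1$ but also that $V$ has ``infinite slope'' at the level of its inverse, and one needs a precise statement to this effect — e.g.\ that for any $\lambda < 1 < \Lambda$ and all large $y$, $V(y^{\Lambda}) \ge \Lambda' V(y)$ and $V(y^{\lambda}) \le \lambda' V(y)$ for suitable constants, or a direct Potter-type bound. This is where I would invoke \cite{BGT89} (Proposition 1.5.9a already gives $V$ slowly varying; a de Haan / rapid variation statement for the inverse is the relevant tool), or the bounds on $\p(\mathcal H \ge n)$ from \cite{NW07} referenced in the introduction, which presumably already phrase the tail of $\mathcal H$ in terms of such an inverse.

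The main obstacle will be making the passage from the discrete recursion \eqref{eq:Qrec} to the integral asymptotic $V(1/Q_n)\sim n$ fully rigorous when $\ell_\star$ is an arbitrary slowly varying function tending to $0$: the increments $Q_n - Q_{n+1}$ need not be monotone relative to $Q_n\ell_\star(1/Q_n)$ in a way that makes a naive integral comparison immediate, and one has to control the accumulated error $\sum_{k\le n}(\text{telescoping error})$ and show it is $o(V(1/Q_n))$. The clean way around this is to set $R_n = V(1/Q_n)$ and show $R_{n+1} - R_n \to 1$ using a Taylor expansion of $V$ together with \eqref{eq:Qrec} and $V'(y) = 1/(y\ell_\star(y))$; slow variation of $\ell_\star$ (hence of $V'$ along the relevant scales) gives $R_{n+1}-R_n = V(1/Q_{n+1}) - V(1/Q_n) \approx V'(1/Q_n)\cdot(1/Q_{n+1} - 1/Q_n) \approx \frac{1}{(1/Q_n)\ell_\star(1/Q_n)}\cdot \frac{Q_n\ell_\star(1/Q_n)}{Q_{n+1}Q_n}\cdot Q_n = \frac{Q_n}{Q_{n+1}} \to 1$, and Cesàro then yields $R_n\sim n$. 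I would present this $R_n$-increment computation as the technical heart of the lemma, and quote \cite{BGT89} and \cite{NW07} for the facts about slowly/rapidly varying functions and the already-known form of the tail of $\mathcal H$ that this analysis is consistent with.
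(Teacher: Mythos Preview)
Your approach is correct and leads to the same place as the paper's proof, but the technical route to the key asymptotic $V(1/Q_n)\sim n$ (equivalently $V_\star(Q_n)\sim n$) differs. The paper establishes this via the explicit Nagaev--Wachtel two-sided inequality $n<V_\star(Q_n)<n-C\ln V_\star^{-1}(n)$ (reproduced as Lemmas~\ref{lem:1}--\ref{lem:3}), which in turn requires a monotonicity argument for $x\mapsto x\ell(x)$ and a separate bound on the error term $\ln V_\star^{-1}(n)$. Your increment computation $R_{n+1}-R_n\to 1$ with $R_n=V(1/Q_n)$, followed by Ces\`aro, is more direct and bypasses those auxiliary lemmas entirely; the uniform convergence theorem for slowly varying functions (to control $\ell_\star$ on $[1/Q_n,1/Q_{n+1}]$, since the ratio of endpoints tends to $1$) is all that is needed to justify it. The final step is then the same in both proofs: combine $V(1/Q_{(1\pm\varepsilon)h_n})\sim(1\pm\varepsilon)h_n$ with $h_n\sim V(b_n)$ from Lemma~\ref{lem:equiv}, and use that $V$ is slowly varying and increasing so that its inverse is rapidly varying (\cite[Theorem~2.4.7]{BGT89}). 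One small remark: your claim that $1/Q_{(1+\varepsilon)h_n}$ grows ``faster than any power of $b_n$'' is stronger than what you need and not quite what the argument gives; the clean statement is simply that $V(x_n)/V(b_n)\to 1+\varepsilon>1$ with $V$ slowly varying and increasing forces $x_n/b_n\to\infty$ (by a one-line contradiction: if $x_n\le Cb_n$ along a subsequence then $V(x_n)\le V(Cb_n)\sim V(b_n)$), and symmetrically for the $(1-\varepsilon)$ case.
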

We claim that this immediately implies that for every $c>0$  we have 
\begin{equation}
\label{eq:hforest}
\frac{\max_{1 \leq i \leq  c  b_{n} }\mathsf{H}(\tau_{i}) }{h_{n}}  \quad \mathop{\longrightarrow}^{(\P)} \quad  1, 
\end{equation}
where  we recall that $(\tau_{i})_{i \geq 1}$ is a sequence of i.i.d.\ $\mu$-Bienaym\'e trees. 
Indeed, for any $\varepsilon>0$,
\begin{align*}
\P\left(\max_{1 \leq i \leq  c  b_{n} }\mathsf{H}(\tau_{i}) >(1+\varepsilon)h_n \right)&\leq c  b_{n}  Q_{(1+\varepsilon)h_{n}} \to 0
\intertext{by a union bound and}
\P\left(\max_{1 \leq i \leq  c  b_{n} }\mathsf{H}(\tau_{i}) <(1-\varepsilon)h_n \right)&=(1-Q_{(1-\varepsilon)h_{n}})^{c  b_{n} }\\& \leq \exp\left( - c  b_{n}  Q_{(1-\varepsilon)h_{n}}\right) \to 0
\end{align*}
since $1-x\leq e^{-x}$ for $x\ge 0$. 

We now explain how to use \eqref{eq:hforest} to prove Theorem \ref{thm:heightCauchy0}, then return to the proof of Lemma~\ref{lem:cvproba}.

\begin{proof}[Proof of Theorem \ref{thm:heightCauchy0}] We have already proved the result for the width, in Corollary~\ref{cor:width}. To prove the result for the height, by Theorem \ref{thm:dTVLoc} it is enough to check that the convergence ${\mathsf{H}(\rtree'_{n})}/{h_{n}} \rightarrow 1$ holds in probability, where we keep the notation $\rtree'_{n}$  introduced in Section \ref{ssec:forest}. For $\varepsilon \in (0,1)$ write 
\[
\p \left( \max_{1 \leq i \leq |S_{n-1}|}\mathsf{H}(\tau_{i}) \geq (1+\varepsilon) h_{n} \right) \leq \p \left( \max_{1 \leq i \leq 2 b_{n} }\mathsf{H}(\tau_{i}) \geq (1+\varepsilon) h_{n} \right)+\p \left( |S_{n-1}| \geq 2  b_{n}  \right)
\]
and
\[
\p \left( \max_{1 \leq i \leq |S_{n-1}|}\mathsf{H}(\tau_{i}) \leq (1-\varepsilon) h_{n} \right) \leq \p \left( \max_{1 \leq i \leq \frac{ b_{n} }{2}}\mathsf{H}(\tau_{i}) \leq (1-\varepsilon) h_{n} \right)+\p \left( |S_{n-1}| \leq \frac{ b_{n} }{2}  \right).
\]

By \eqref{eq:cvCauchy} we have $(S_{n-1}+b_n)/a_{n} \rightarrow 1$ in probability. Since $a_n=o(b_n)$, this implies that $|S_n/b_n| \to 1$ in probability. Since the trees $(\tau_i,i \ge 1)$ are i.i.d.\ $\mu$-Bienaym\'e trees, it then follows from \eqref{eq:hforest}  that 
\begin{equation}
\label{eq:cvproba}\frac{\max_{1 \leq i \leq |S_{n-1}|}\mathsf{H}(\tau_{i}) }{h_{n}}  \quad \mathop{\longrightarrow}^{(\P)} \quad  1.
\end{equation}
Taking into account \eqref{eq:boundH}, since $\mathsf{H}(\mathcal{F}^{n}_{\mathrm{L}})$ and $\mathsf{H}(\mathcal{F}^{n}_{\mathrm{R}})$ have the same law by symmetry and since $\p(S_{n-1}>0) \rightarrow 0$, it remains to check that for every $\varepsilon>0$
\begin{equation}
\label{eq:rightforest}
\frac{\|v'_n\|}{h_{n}}  \quad \mathop{\longrightarrow}^{(\P)} \quad 0 \qquad \textrm{and} \qquad \p \left( \frac{\max_{ |S_{n-1}| < i \leq |\underline{S}_{n-1}|}\mathsf{H}(\tau_{i})}{h_{n}} \geq 1+\varepsilon \right)  \quad \longrightarrow\quad 0.
\end{equation}

To bound $\|v'_n\|$, by Theorem 2 and Remark 18 in \cite{KR19} (recall that in the notation of \cite{KR19}, $-b_{n}$ is used instead of $b_{n}$) we have
\begin{equation}
\label{eq:exp}{\ell_{\star}( b_{n} )} \|v'_n\|  \quad \mathop{\longrightarrow}^{(d)} \quad \mathsf{Exp}(1).
\end{equation}
Moreover, by Lemma \ref{lem:equiv},
\[
h_{n} {\ell_{\star}( b_{n} )}  \sim \ell_{\star}( b_{n} )   \int_{1}^{ b_{n} } \frac{\mathrm{d} x}{x \ell_{\star}(x)} \rightarrow \infty
\]
by\cite[Proposition 1.5.9b]{BGT89}, and combined with \eqref{eq:exp} this  implies that $\|v'_n\|/h_{n} \rightarrow 0$ in probability.

Finally, to bound $\max_{ |S_{n-1}| < i \leq |\underline{S}_{n-1}|}\mathsf{H}(\tau_{i})$, first observe that by applying  Proposition \ref{prop:functional_conv_W} with $\rtree'_{  n}$ (which is licit thanks to Theorem \ref{thm:dTVLoc}) we get $(S_{n-1}-\underline{S}_{n-1})/b_{n} \rightarrow 0$ in probability as $n \rightarrow \infty$.  Thus
\begin{eqnarray*}
&&\p \left(  \frac{\max_{ |S_{n-1}| < i \leq |\underline{S}_{n-1}|}\mathsf{H}(\tau_{i})}{h_{n}}   \geq 1+\varepsilon \right) \\
&& \qquad  \qquad \leq 
\p \left(  \frac{\max_{1 \leq i \leq   b_{n} }\mathsf{H}(\tau_{i})}{h_{n}}   \geq 1+ \varepsilon \right) + \p \left(|S_{n-1}-\underline{S}_{n-1}|  >  b_{n}  \right),
\end{eqnarray*}
which tends to $0$ thanks to \eqref{eq:hforest}. 
This completes the proof.
\end{proof}

The proof of Lemma \ref{lem:cvproba} relies on estimates due to Nagaev \& Wachtel \cite{NW07}.  We first introduce some notation. 
For $s>0$ we set $\ell(s)=\ell_{\star}(1/s)$,  so that $G_{\mu}(s)=s+(1-s) \ell \left( 1-s\right)$ and 
 \begin{equation}\label{eq:ell}
 \ell(s)=\mu_0 -\sum_{k=1}^\infty \mu([k+1,\infty)) (1-s)^k.
 \end{equation}
 Indeed,
\begin{align*}
\ell(1-s)&= \frac{G_{\mu}(s)-s}{1-s} = \frac{1}{1-s}\sum_{k=0}^\infty \mu_k(s^k-s)\\
&= \frac{1}{1-s}\left(\mu_0(1-s)-\sum_{k=2}^\infty \mu_k(1-s)(s^{k-1}+\dots+s)\right)\\
&=\mu_0 -\sum_{k=1}^\infty \mu([k+1,\infty)) s^k
\end{align*}
and \eqref{eq:ell} follows.
As a consequence,  $\ell(0)\coloneqq \lim_{s\downarrow0}\ell(s)=0$, $\ell(1)<1$ and $\ell$ is increasing. A fortiori, $x \mapsto x \ell(x)$ is increasing. Finally, recalling that $Q_n = \P(\mathcal{H} \geq n)$, observe that $Q_{n+1}=1-G_\mu(1-Q_n)=Q_{n}(1-\ell(Q_{n}))$. 

We shall use two estimates due to Nagaev \& Wachtel \cite{NW07}. We have included the proofs for these estimates in our paper to ensure both self-containment and better accessibility. The proofs for the desired estimates do essentially appear in \cite{NW07}, but the estimates are not explicitly stated and their proofs are distributed throughout the paper, which can make it somewhat challenging to see the full picture.

\begin{lem}[Nagaev \& Wachtel, proof of Lemma 5 in \cite{NW07}]
\label{lem:1}
There exists a constant $C>0$ such that for every $n \geq 1$
\[\frac{1}{1-\ell(Q_{n})}\frac{\ell(Q_{n})}{\ell(Q_{n+1})} \leq  1+C \ell(Q_{n})\]
\end{lem}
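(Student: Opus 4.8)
The plan is to work directly from the recursion $Q_{n+1} = Q_n(1 - \ell(Q_n))$, which gives $\ell(Q_{n+1}) = \ell\bigl(Q_n(1-\ell(Q_n))\bigr)$, and to compare $\ell(Q_n)/\ell(Q_{n+1})$ with $1$ via the slow variation of $\ell$ near $0$ combined with its monotonicity. Since $\ell$ is increasing and $1 - \ell(Q_n) \le 1$, we have $Q_{n+1} \le Q_n$, hence $\ell(Q_{n+1}) \le \ell(Q_n)$ and so $\ell(Q_n)/\ell(Q_{n+1}) \ge 1$; the content of the lemma is a quantitative upper bound on how much this ratio (divided by $1 - \ell(Q_n)$) can exceed $1$.

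First I would set $q = Q_n$ and write $\ell(q)/\ell(q(1-\ell(q)))$ and try to control it. The natural tool is the representation of a slowly varying function (or more precisely of $x \mapsto x\ell(x)$, which is increasing and regularly varying of index $1$ near $0$ after the substitution $x \mapsto 1/x$): writing $x\ell(x)$ in Karamata form one gets that for $0 < a \le b$ small, $\frac{b\ell(b)}{a\ell(a)} \le \frac{b}{a}\cdot(1 + o(1))$ uniformly, but for a clean inequality valid for \emph{every} $n \ge 1$ one wants an absolute constant. Here the key is that $Q_n \to 0$ as $n \to \infty$ (since $\mu$ is critical, $\mathcal H < \infty$ a.s.), so $\ell(Q_n) \to 0$; thus it suffices to prove the inequality for all sufficiently small $q$ with an absolute constant, and then absorb the finitely many remaining values of $n$ into $C$ by enlarging it (each individual ratio is a fixed finite number $> 1$, and $1 + C\ell(Q_n) \to \infty$ as $C \to \infty$ for fixed $n$ since $\ell(Q_n) > 0$).

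So the crux reduces to: for $q$ small, $\frac{1}{1-\ell(q)}\cdot\frac{\ell(q)}{\ell(q(1-\ell(q)))} \le 1 + C\ell(q)$. Using $\frac{1}{1-\ell(q)} = 1 + \ell(q) + O(\ell(q)^2)$, it is enough to show $\frac{\ell(q)}{\ell(q(1-\ell(q)))} \le 1 + O(\ell(q))$, i.e. that $\ell(q) - \ell(q(1-\ell(q))) \le O(\ell(q)^2)$, equivalently that the increment of $\ell$ between $q(1-\ell(q))$ and $q$ — an interval of multiplicative width $1/(1-\ell(q)) = 1 + O(\ell(q))$ — is $O(\ell(q)^2)$. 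This is precisely a uniform-convergence / Potter-bound statement for the regularly varying function $x\ell(x)$ (index $1$): the relative increment of a regularly varying function of index $1$ over a multiplicative interval of relative width $\delta$ is $(1+o(1))\delta$, and since $\ell(q) = o(1)\cdot q\ell(q)/q$... more carefully, $\frac{q\ell(q)}{q(1-\ell(q))\ell(q(1-\ell(q)))}\le \frac{1}{1-\ell(q)}(1+o(1))$ gives exactly $\ell(q)/\ell(q(1-\ell(q))) \le 1 + o(1)$, and one needs the sharper Potter-type bound to get the $O(\ell(q))$ rate. I expect this to be the main obstacle: extracting the correct \emph{rate} $O(\ell(q))$ rather than merely $o(1)$, which I would handle by invoking the uniform convergence theorem for regularly varying functions (Bingham–Goldie–Teugels, Theorem 1.2.1, or the Potter bounds 1.5.6) applied to $x \mapsto x\ell(x)$ near $0$, together with the fact that $\ell(q) \to 0$. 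I would finish by reassembling: $\frac{1}{1-\ell(q)}\cdot\frac{\ell(q)}{\ell(q(1-\ell(q)))} \le (1+\ell(q)+O(\ell(q)^2))(1 + O(\ell(q))) \le 1 + C\ell(q)$ for $q$ small, and then enlarge $C$ to cover the finitely many values of $n$ for which $Q_n$ is not small.
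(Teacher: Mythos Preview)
Your reduction to ``small $q$'' plus finitely many $n$ is fine, but the core estimate has a genuine gap: Potter bounds and the uniform convergence theorem for regularly varying functions do \emph{not} deliver the rate $\ell(q)/\ell(q(1-\ell(q))) \le 1 + O(\ell(q))$. Those results only say that for a slowly varying $\ell$, the ratio $\ell(q)/\ell(\lambda q)$ tends to $1$ (uniformly for $\lambda$ in compacts), or is bounded by $A\lambda^{-\delta}$ with an uncontrolled constant $A>1$. Here $\lambda = 1-\ell(q) \to 1$, and you need the ratio to be $1 + O(1-\lambda)$; abstract regular variation gives $1+o(1)$ but no rate. You correctly identify this as ``the main obstacle'' and then wave at BGT Theorems 1.2.1 and 1.5.6, but neither closes the gap.

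The paper's proof avoids this entirely by exploiting the explicit structure of $\ell$: from the series $\ell(s)=\sum_{k\ge 1}\mu([k+1,\infty))(1-(1-s)^k)$ and the elementary inequality $ks(1-s)^{k-1}\le 1-(1-s)^k$, one gets the \emph{exact} differential inequality $\ell'(s)/\ell(s) < 1/s$ on $(0,1]$. Integrating from $Q_{n+1}$ to $Q_n$ yields $\ell(Q_n)/\ell(Q_{n+1}) \le \exp(Q_n/Q_{n+1}-1) = \exp\bigl(\ell(Q_n)/(1-\ell(Q_n))\bigr)$. Since $\ell(Q_n)\le \ell(1)<1$ is bounded away from $1$ uniformly in $n$, the function $t\mapsto \tfrac{1}{1-t}\exp\bigl(\tfrac{t}{1-t}\bigr)$ is bounded by $1+Ct$ on $[0,\ell(1)]$, giving the lemma for \emph{all} $n\ge 1$ at once, with no need to treat small and large $n$ separately. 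The key point you are missing is that $\ell$ is not an arbitrary slowly varying function: its power-series form gives the sharp pointwise bound $s\ell'(s)<\ell(s)$, which is strictly stronger than slow variation.
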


\begin{proof}
From \eqref{eq:ell}, since $\mu$ is critical, for  $0 \leq s \leq 1$, we have
\[
\ell(s)=\sum_{k=1}^{\infty} \mu([k+1,\infty)) (1-(1-s)^{k}),
\]
and the inequality $k s (1-s)^{k-1} \leq 1-(1-s)^{k}$ then implies that $\ell'(s)/\ell(s) < 1/s$. If we integrate this inequality from $x$ to $y$ for $0<x<y \leq 1$ we get that $ \ln ({\ell(y)}/{\ell(x)}) < \ln(   {y}/{x})  < y/x-1$. Thus
 \[
 \frac{\ell(Q_{n})}{\ell(Q_{n+1})} \leq  \exp \left(  \frac{Q_{n}}{Q_{n+1}}-1\right)= \exp \left( \frac{1}{1-\ell(Q_{n})}-1 \right) =  \exp \left( \frac{\ell(Q_{n})}{1-\ell(Q_{n})} \right) \leq 1+C' \ell(Q_{n})
 \]
and
 \[
 (1-\ell(Q_{n}))^{-1} \frac{\ell(Q_{n})}{\ell(Q_{n+1})} \leq   (1-\ell(Q_{n}))^{-1}    \exp \left( \frac{\ell(Q_{n})}{1-\ell(Q_{n})} \right) \leq 1+C \ell(Q_{n}) 
 \]
 for certain constants $C,C'>0$, since for all $0\leq s\leq 1$ we have that $0\leq \ell(s) \leq \ell(1) <1$. 

\end{proof}

For $ y \in (0,1)$, set \[V_{\star}(y)=V(1/y)=\int_{y}^{1} \frac{\mathrm{d} x}{x \ell(x)}.\]

\begin{lem}[Nagaev \& Wachtel, Eq.~(51) in \cite{NW07}] 
\label{lem:2}
There exists a constant $C>0$ such that
we have
\[
n < V_{\star}(Q_{n}) < n - C \ln V_{\star}^{-1}(n).
\]
\end{lem}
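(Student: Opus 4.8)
The plan is to track the increments of the sequence $\big(V_\star(Q_n)\big)_{n\ge 0}$, which telescopes since $Q_0=\P(\mathcal H\ge 0)=1$ and so (with the convention $V_\star(1)=0$) $V_\star(Q_0)=0$. From the recursion $Q_{n+1}=Q_n(1-\ell(Q_n))$ one gets $Q_n-Q_{n+1}=Q_n\ell(Q_n)$, and since $0<\ell(s)<1$ for $s\in(0,1]$ the sequence $(Q_n)$ is positive and strictly decreasing. As $x\mapsto x\ell(x)$ is strictly increasing on $(0,1)$ (so $1/(x\ell(x))$ is strictly decreasing), bounding $\int_{Q_{n+1}}^{Q_n}\mathrm d x/(x\ell(x))$ by the values of the integrand at its endpoints gives, for every $n\ge 0$,
\[
1=\frac{Q_n-Q_{n+1}}{Q_n\ell(Q_n)}<V_\star(Q_{n+1})-V_\star(Q_n)<\frac{Q_n-Q_{n+1}}{Q_{n+1}\ell(Q_{n+1})}=\frac{1}{1-\ell(Q_n)}\cdot\frac{\ell(Q_n)}{\ell(Q_{n+1})}\le 1+C_1\ell(Q_n),
\]
the last inequality being Lemma~\ref{lem:1} (whose proof is valid for $n\ge 0$). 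Summing over $0\le k\le n-1$,
\[
n<V_\star(Q_n)<n+C_1\sum_{k=0}^{n-1}\ell(Q_k),
\]
which already yields the asserted lower bound $n<V_\star(Q_n)$.

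It then remains to show that $\sum_{k=0}^{n-1}\ell(Q_k)$ is at most a constant multiple of $\ln\!\big(1/V_\star^{-1}(n)\big)=-\ln V_\star^{-1}(n)>0$, where $V_\star^{-1}$ is the inverse of the decreasing bijection $V_\star\colon(0,1)\to(0,\infty)$. The idea is to feed the lower bound back in: from $V_\star(Q_k)>k$ we get $Q_k<V_\star^{-1}(k)$ for $k\ge 1$, hence $\ell(Q_k)\le\ell(V_\star^{-1}(k))$ since $\ell$ is increasing. As $t\mapsto\ell(V_\star^{-1}(t))$ is decreasing, $\ell(V_\star^{-1}(k))\le\int_{k-1}^{k}\ell(V_\star^{-1}(t))\,\mathrm d t$, and summing over $1\le k\le n-1$ and substituting $t=V_\star(y)$ (so $\mathrm d t=-\mathrm d y/(y\ell(y))$, with $y=1$ at $t=0$ and $y=V_\star^{-1}(n-1)$ at $t=n-1$) gives
\[
\sum_{k=1}^{n-1}\ell(Q_k)\le\int_0^{n-1}\ell(V_\star^{-1}(t))\,\mathrm d t=\int_{V_\star^{-1}(n-1)}^{1}\frac{\mathrm d y}{y}=\ln\frac{1}{V_\star^{-1}(n-1)}\le\ln\frac{1}{V_\star^{-1}(n)}.
\]
Adding the single term $\ell(Q_0)=\ell(1)$ and using $\ln(1/V_\star^{-1}(n))\ge\ln(1/V_\star^{-1}(1))=:\delta>0$ for all $n\ge 1$, one obtains $\sum_{k=0}^{n-1}\ell(Q_k)\le\big(1+\ell(1)/\delta\big)\ln(1/V_\star^{-1}(n))$; plugging this into the telescoped estimate gives $V_\star(Q_n)<n-C\ln V_\star^{-1}(n)$ with $C:=C_1\big(1+\ell(1)/\delta\big)$.

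The only step that requires an idea is the bound on $\sum_k\ell(Q_k)$. The naive route, via $\ln Q_k-\ln Q_{k+1}=-\ln(1-\ell(Q_k))\ge\ell(Q_k)$, only gives $\sum_k\ell(Q_k)\le\ln(1/Q_n)$, which is too weak since $Q_n$ may be far smaller than $V_\star^{-1}(n)$; the remedy is precisely to insert the already-established inequality $Q_k\le V_\star^{-1}(k)$ and compare the sum against the integral of $\ell\circ V_\star^{-1}$, whose antiderivative is $-\ln V_\star^{-1}$. Everything else is a routine monotonicity estimate combined with Lemma~\ref{lem:1}, so I anticipate no further obstacle.
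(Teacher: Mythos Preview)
Your proof is correct and follows essentially the same route as the paper: telescope $V_\star(Q_n)$ via $\int_{Q_{k+1}}^{Q_k}\mathrm dx/(x\ell(x))$, bound each piece using monotonicity of $x\ell(x)$ together with Lemma~\ref{lem:1}, and then control $\sum_k\ell(Q_k)$ by inserting $Q_k<V_\star^{-1}(k)$ and comparing with $\int_0^n\ell(V_\star^{-1}(t))\,\mathrm dt=-\ln V_\star^{-1}(n)$. The only difference is cosmetic: you treat the $k=0$ term separately and absorb it into the constant via $\ln(1/V_\star^{-1}(n))\ge\delta$, whereas the paper folds it into the Riemann-sum comparison directly (in fact slightly less carefully than you do).
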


\begin{proof}
Using the facts that $x \mapsto x \ell(x)$ is increasing and that $Q_{i}-Q_{i+1}=Q_i\ell(Q_i)$, write
\begin{equation*}\label{eq:lower_bound_W} n= \sum_{i=0}^{n-1} \frac{Q_{i}-Q_{i+1}}{Q_{i} \ell(Q_{i})}<  \sum_{i=0}^{n-1} \int_{Q_{i+1}}^{Q_{i}} \frac{\mathrm{d}x}{x \ell(x)}=\int_{Q_{n}}^{1} \frac{\mathrm{dx}}{x \ell(x)}=V_{\star}(Q_n),
\end{equation*}
which gives the lower bound for $V_{\star}(Q_n)$ in the lemma.
For the upper bound, we see that 
\[
V_{\star}(Q_n)=\sum_{i=0}^{n-1} \int_{Q_{i+1}}^{Q_{i}} \frac{\mathrm{d}x}{x \ell(x)} < \sum_{i=0}^{n-1} \frac{Q_{i}-Q_{i+1}}{Q_{i+1} \ell(Q_{i+1})} = \sum_{i=0}^{n-1} \frac{Q_{i}\ell(Q_i)}{Q_{i+1} \ell(Q_{i+1})}= \sum_{i=0}^{n-1} \frac{1}{1-\ell(Q_{i})} \frac{\ell(Q_i)}{\ell(Q_{i+1})}. 
 \]
 Thus, by Lemma \ref{lem:1} we have
 \[V_{\star}(Q_{n}) < n +C \sum_{i=0}^{n-1} \ell(Q_{i}).\]
 Next, since the lower bound for $V_{\star}(Q_i)$ gives that $i<V_{\star}(Q_{i})$, using the fact that $V_{\star}$ is decreasing and $\ell$ is increasing, write
 \[ \sum_{i=0}^{n-1} \ell(Q_{i}) <  \sum_{i=0}^{n} \ell (V_{\star}^{-1}(i)) < \int_{0} ^{n} \ell(V_{\star}^{-1}(x)) \mathrm{d}x.\]
 The change of variable $x=V_{\star}(u)$ with $V_{\star}'(u)= - \frac{1}{u \ell (u)}$ gives 
 \[ \int_{0} ^{n} \ell(V_{\star}^{-1}(x)) \mathrm{d}x= \int_{V_{\star}^{-1}(n)}^{1} \ell(u)  \cdot  \frac{1}{u \ell (u)} \mathrm{d}u= - \ln V_{\star}^{-1}(n),\]
 and the desired result follows.
\end{proof}

\begin{lem}
\label{lem:3}We have $\ln(V_{\star}^{-1}(n))/n \rightarrow 0$ as $ n \rightarrow \infty$.
\end{lem}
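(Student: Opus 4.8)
The plan is to recast the statement as an asymptotic lower bound on $V_\star$ near $0$, and then exploit the one crucial fact that $\ell(x)\to 0$ as $x\downarrow 0$. First I would record that $V_\star\colon(0,1)\to(0,\infty)$ is a continuous, strictly decreasing bijection: by \eqref{eq:ell}, $\ell$ is continuous, positive and increasing on $(0,1]$ with $\ell(1)<1$, so $1/(x\ell(x))$ is continuous and positive on $(0,1]$, which makes $V_\star$ continuous and strictly decreasing, while $1/(x\ell(x))\ge 1/(\ell(1)x)$ forces $\int_0^1 \mathrm{d}x/(x\ell(x))=\infty$, i.e.\ $V_\star(y)\to\infty$ as $y\downarrow 0$. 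Hence $y_n\coloneqq V_\star^{-1}(n)$ is well defined and $y_n\downarrow 0$ as $n\to\infty$, and from $n=V_\star(y_n)$ we get the identity
\[
\frac{\ln\big(V_\star^{-1}(n)\big)}{n}=\frac{\ln y_n}{V_\star(y_n)}.
\]
Since $\ln y_n<0$ for all large $n$, it therefore suffices to show $V_\star(y)/\ln(1/y)\to\infty$ as $y\downarrow 0$.

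For this I would use a one-line lower bound obtained by restricting the defining integral to the interval $(y,\sqrt{y})$ and using monotonicity of $\ell$: for $y\in(0,1)$,
\[
V_\star(y)=\int_y^1\frac{\mathrm{d}x}{x\ell(x)}\ \ge\ \int_y^{\sqrt{y}}\frac{\mathrm{d}x}{x\ell(x)}\ \ge\ \frac{1}{\ell(\sqrt{y})}\int_y^{\sqrt{y}}\frac{\mathrm{d}x}{x}\ =\ \frac{\ln(1/y)}{2\,\ell(\sqrt{y})}.
\]
As $\ell(\sqrt{y})\to\ell(0)=0$ when $y\downarrow 0$, this gives $V_\star(y)/\ln(1/y)\ge 1/(2\ell(\sqrt{y}))\to\infty$. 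Plugging the bound back into the identity above and using $\ln y_n<0$ yields
\[
0\ \ge\ \frac{\ln\big(V_\star^{-1}(n)\big)}{n}=\frac{\ln y_n}{V_\star(y_n)}\ \ge\ -\,2\,\ell\!\left(\sqrt{y_n}\right)\ \xrightarrow[n\to\infty]{}\ 0,
\]
which is the assertion of the lemma.

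I do not expect a genuine obstacle here: the content is entirely elementary once one isolates $\ell(x)\to0$ as $x\downarrow0$, and the only point deserving a word of care is the verification that $V_\star$ is a true decreasing bijection onto $(0,\infty)$ so that $V_\star^{-1}(n)$ makes sense and tends to $0$. Equivalently, the lemma just says that $V$ grows strictly faster than $x\mapsto\ln x$ (a consequence of $\ell_\star(x)\to0$ as $x\to\infty$, noted after \eqref{eq:Gmu}), so that $V^{-1}(n)=e^{o(n)}$ and $V_\star^{-1}(n)=1/V^{-1}(n)=e^{-o(n)}$.
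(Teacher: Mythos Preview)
Your proof is correct. Both you and the paper reduce the claim to the single fact that $\ell(x)\to 0$ as $x\downarrow 0$, but the mechanics differ. The paper observes that $\frac{\mathrm{d}}{\mathrm{d}x}\ln V_\star^{-1}(x)=-\ell(V_\star^{-1}(x))$ and integrates this from a large $M$ to $n$, obtaining $\ln V_\star^{-1}(n)\ge C-\varepsilon n$ directly. You instead work on the $V_\star$ side, restricting the defining integral to $(y,\sqrt y)$ and using monotonicity of $\ell$ to get $V_\star(y)\ge \ln(1/y)/(2\ell(\sqrt y))$, then substitute $y=y_n$. The paper's route is marginally shorter (one derivative identity and an $\varepsilon$--$M$ argument), while your integral-splitting bound is more self-contained in that it never differentiates the inverse; both are elementary and yield the same sandwich $-2\ell(\sqrt{y_n})\le \ln y_n/n\le 0$ up to the exact form of the vanishing lower bound.
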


\begin{proof}

We have $0<V_{\star}^{-1}(n)<1$ so that $\ln(V_{\star}^{-1}(n))<0$. We also have that  $(\ln V_{\star}^{-1}(x))'=-\ell(V_{\star}^{-1}(x))$. Since $V_{\star}^{-1}(x) \rightarrow 0$ as $x \rightarrow \infty$ and since $\ell(0)=0$, we see that for every $\varepsilon>0$ we can find $M>0$ such that $-\ell(V_{\star}^{-1}(x)) \geq -\varepsilon$ for $x \geq M$. Integrating this inequality, it follows that there exists a constant $C>0$ such that for $n \geq M$
\[
C -\varepsilon n \leq \ln V_{\star}^{-1}(n) \leq 0
\]
The result follows.
\end{proof}

\begin{proof}[Proof of Lemma \ref{lem:cvproba}]
By \cite[Theorem 2.4.7 (i) and Eq.~(2.4.3)]{BGT89}, since $V_{\star}$ is slowly varying and decreasing, $V_{\star}^{-1}$ is rapidly varying and decreasing, meaning that for every $\varepsilon \in (0,1)$ we have $V_{\star}^{-1}((1+\varepsilon) n)/V_{\star}^{-1}(n) \rightarrow 0$. Also, if $a_{n}$ is a sequence such that $a_{n}/n \rightarrow 1-\varepsilon$, we have $V_{\star}^{-1}( a_{n })/V_{\star}^{-1}(n) \rightarrow \infty$.

To simplify notation, set $h'_{n}=V( b_{n} )$. By definition, $h'_{n}=V_{\star}(1/b_n)$, so $b_n=1/V_{\star}^{-1}(h'_{n})$. By Lemma \ref{lem:equiv}, we have $h_n \sim h'_{n}$. Thus for $n$ sufficiently large we have $ (1+\varepsilon)  h_{n} \leq (1+2\varepsilon) h'_{n}$ and$ (1-\varepsilon)  h_{n} \leq (1-2\varepsilon) h'_{n}$.

Then, by Lemma \ref{lem:2}, 
\[
 b_{n}  Q_{(1+\varepsilon)h_{n}} \leq b_{n}  Q_{(1+2\varepsilon)h'_{n}}   \frac{V_{\star}^{-1}((1+2\varepsilon)h'_{n})}{V_{\star}^{-1}(h'_{n})}  \quad \longrightarrow\quad 0.
\]

For the other convergence, using Lemma \ref{lem:2}, similarly write
\[
 b_{n}  Q_{(1-\varepsilon)h_{n}}  \geq   b_{n}  Q_{(1-2\varepsilon)h'_{n}}    \geq  \frac{V_{\star}^{-1}((1-2\varepsilon)h_{n} - C \ln V_{\star}^{-1}((1-2\varepsilon)h_{n}))}{V_{\star}^{-1}(h'_{n})} \quad \longrightarrow\quad \infty,
\]
where we use that \[\frac{ (1-2\varepsilon)h'_{n} - C \ln V_{\star}^{-1}((1-2\varepsilon)h'_{n})}{h_n} \to 1-2\varepsilon\] by Lemma \ref{lem:3}.
This completes the proof.
\end{proof}

\subsection{Examples}
\label{ssec:examples}

Here we give the details concerning the examples mentioned in the end of the Introduction. Starting from $  \mu_n=  {L(n)}/{n^{2}}$, we compute the asymptotic equivalents of $\ell_{\star}$ and $V$ using \eqref{eq:ellstar} and the definition of $V$:
\begin{equation}
\label{eq:integrals}\ell_{\star}(n)  \quad \sim \quad  \int_{n}^{\infty} \frac{L(x)}{x} \mathrm{d}x, \qquad V(y)=\int_{1}^{y} \frac{\mathrm{d} x}{x \ell_{\star}(x)}.
\end{equation}
We extract an asymptotic equivalent for $b_{n}$ from the implicit asymptotic equivalent $ b_{n} \sim  n \ell_{\star}( b_{n} )$, which comes from \cite[Lemma 4.3]{Ber19}. (We have already seen that $ b_{n} \sim  n \ell_{\star}( a_{n} )$ but here it is more convenient to use that $ b_{n} \sim  n \ell_{\star}( b_{n} )$). This allows to compute an asymptotic equivalent for $V( b_{n} )$, which by Lemma \ref{lem:equiv} is an asymptotic equivalent of $h_{n}$. In the following examples the  integrals appearing in \eqref{eq:integrals} can actually be computed explicitly.
 
 \begin{enumerate}
 \item[--]  $L(n) \sim \beta \ln(n)^{-1-\beta} $ with $\beta>0$, then $\ell_{\star}(s) \sim \ln(s)^{-\beta}$ and $V(x) \sim  \frac{1}{1+\beta} \ln(x)^{1+\beta}$. Then $ b_{n}  \sim n/\ln( b_{n} )^{\beta}$, which implies that $ b_{n}  \sim n/\ln(n)^{\beta}$ and thus
 \[
 h_{n}  \quad \sim \quad  V( b_{n} )   \quad \sim \quad   \frac{1}{1+\beta} \ln( b_{n} )^{1+\beta}  \quad \sim \quad \frac{1}{1+\beta} \ln(n)^{1+\beta}.
 \]

 \item[--] Set $\ln_{(1)}(x)=\ln(x)$ and for every $k \geq 1$ define recursively $\ln_{(k+1)} (x) = \ln(\ln_{(k)}(x))$. For
 \[
 L(n) \quad \sim \quad  \frac{1}{(\ln_{(k)}(n))^{2}} \prod_{i=1}^{k-1} \frac{1}{\ln_{(i)}(n) } \]
 with $k \geq 2$, we have $\ell_{\star}(s)=\ln_{(k)}(s)^{-1}$ and $V(x) \sim \ln(x) \ln_{(k)}(x)$. Then $ b_{n}  \sim n \ln_{(k)}( b_{n} )^{-1}$ which implies that $ b_{n}  \sim n \ln_{(k)}(n )^{-1}$ and thus
 \[
 h_{n}  \quad \sim \quad V( b_{n} )   \quad \sim \quad   \ln( b_{n} ) \ln_{(k)}( b_{n} )  \quad \sim \quad  \ln(n) \ln_{(k)}(n).
 \]
  \item[--] $L(n) \sim  \frac{1-\beta}{\beta} \ln(n)^{-\beta} e^{-\ln(n)^{\beta}} $ with $\beta \in (0,1)$, then $\ell_{\star}(s) \sim \frac{1}{\beta} \ln(s)^{1-\beta} e^{-\ln(s)^{\beta}} $ and $V(x) \sim e^{\ln(x)^{\beta}}$. Then, $ b_{n}  \sim \frac{n}{\beta} \ln( b_{n} )^{1-\beta} e^{-\ln( b_{n} )^{\beta}} $. Thus, taking the $\beta$-th power of the logarithm of this equivalent and iterating yields
 \[
 h_{n}  \quad \sim \quad V( b_{n} )   \quad \sim \quad   e^{\ln( b_{n} )^{\beta}}  \quad \sim \quad \exp \left( \sum_{i=0}^{k}  P_{i}(\beta) \ln(n)^{\beta-i(1-\beta)} \right),
 \]
 where $k \geq 0$ is the smallest integer such that $ \frac{k}{k+1} \leq \beta < \frac{k+1}{k+2}$ and where $P_{i}(\beta)$ is a polynomial in $\beta$ of degree $i$  which can be computed by induction (e.g.~$P_{0}=1$, $P_{1}(\beta)=-\beta$).
 Observe that in contrast with the two previous examples, here, in general, it is neither true that $h_{n} \sim V(n)$ nor that $h_{n} \sim V(n \ell_{\star}(n))$.
 \end{enumerate}
 
Finally, we note that Theorem \ref{thm:heightCauchy0} is consistent with Theorem \ref{thm:height_critical0}. Indeed, we have $b_{n}=o(n)$ since $ b_{n} \sim  n \ell_{\star}(a_{n})$ and so to see that
 \begin{equation}
 \label{eq:hnbn} \frac{h_{n}}{\ln(n)}  \quad \longrightarrow\quad \infty,
 \end{equation}
 take $\varepsilon>0$ and $M>0$ such that $\ell_{\star}(x) \leq \varepsilon$ for $x \geq M$. Then
\[ \frac{1}{\ln(n)} \cdot \int_{1}^{ b_{n} }  \frac{ \mathrm{d}x }{x \ell_{\star}(x)} \geq \frac{1}{\ln(n)} \cdot \int_{M}^{ b_{n} }  \frac{ \mathrm{d}x }{x \varepsilon} = \frac{\ln( b_{n} )-\ln(M)}{\ln(n)} \cdot \frac{1}{\varepsilon}.\]
But $ b_{n} /n$ is slowly varying, and if $\Lambda$ is a slowly varying function we have $\ln(\Lambda(n))/\ln(n) \rightarrow 0$ as $n \rightarrow \infty$ (see \cite[Proposition 1.3.6(i)]{BGT89}), so that $\ln(b_{n}) \sim \ln(n)$. As a consequence
\[
\liminf_{n \rightarrow \infty}  \frac{1}{\ln(n)} \cdot \int_{1}^{ b_{n} }  \frac{ \mathrm{d}x }{x \ell_{\star}(x)} \geq    \frac{1}{\varepsilon}
\]
and \eqref{eq:hnbn} follows. 

We finish the section by proving Corollary~\ref{cor:hw}, which stated that $\mathsf{H}(\rtree_{n})\mathsf{Width}(\rtree_{n}) = O_{\p}({n \ln(n)})$.
\begin{proof}[Proof of Corollary~\ref{cor:hw}]
By Theorem \ref{thm:heightCauchy0} and Lemma \ref{lem:equiv}, we have the following convergence in probability:
\[
\frac{\mathsf{H}(\rtree_{n}) \cdot \mathsf{Width}(\rtree_{n}) }{ b_{n}  V( b_{n} )} \quad \xrightarrow[n\to\infty]{} \quad 1.
\]
We thus have to establish that $ b_{n}  V( b_{n} )  = O(n \ln(n))$.
To this end, using the definition of $V$ write
\[
 b_{n}  V( b_{n} )= b_{n}  \int_{1}^{ b_{n} } \frac{\mathrm{d} x}{x \ell_{\star}(x)}=  b_{n}  \int_{1/ b_{n} }^{1} \frac{ \mathrm{d}u}{u \ell_{\star}( b_{n}  u)}.
\]
As seen in the beginning of Sec.~\ref{ssec:examples}, we have  $  b_{n}  \sim  n \ell_{\star}( b_{n} )$, so that
\[
 b_{n}  V( b_{n} )  \quad \sim \quad  n \int_{1/ b_{n} }^{1} \frac{1}{u} \frac{\ell_{\star}( b_{n} )}{ \ell_{\star}( b_{n}  u)}  \mathrm{d}u.
\]
But there exists a constant $C>0$ such that for every $ n \geq 1$ and for every $1/ b_{n}  \leq u \leq 1$ we have  $\ell_{\star}( b_{n} )/\ell_{\star}( b_{n} u) \leq C$ (this follows e.g.~from the representation theorem for slowly varying functions \cite[Theorem 1.3.1]{BGT89}). Thus $ b_{n}  V( b_{n} )   \leq C n \ln( b_{n} )$, 
and the desired result follows from the fact that $\ln( b_{n} ) \sim \ln(n)$ as seen above.
\end{proof}

Observe that in the first two family of examples above $b_{n} V(b_{n})$ is of order $n \ln(n)$, while in the third family of examples we have
\[
  b_{n}  V( b_{n} )   \quad \sim \quad \frac{n}{\beta} \ln(n)^{1-\beta} = o(n \ln(n)).
 \]
 
\section{\bf The height {and width} of large conditioned critical Bienaym\'e trees}\label{sec:height_critical}
In this section we will prove Theorem~\ref{thm:height_critical0}.

\subsection{Degree sequences of Bienaym\'e trees}\label{ssec:deg_seq_bien}
We first establish some properties concerning degree sequences of Bienaym\'e trees {that will be instrumental in our proof of Theorem \ref{thm:height_critical0}.}

We shall use the following fact about unconditioned samples from $\mu$, which can be found in \cite[Lemmas 14.3 and~14.4]{Jan12}. We provide a proof for completeness and since our argument is {more probabilistic in nature and also} somewhat shorter than that in \cite{Jan12}.
\begin{lem}\label{lem:probtree}
For $(X_i,i \ge 1)$ i.i.d.\ samples with law $\P(X_1=i)=\mu_{i+1}$, we have that 
\[\P\left(\sum_{i=1}^n X_i=-1\right)=e^{-o(n)}\]
as $n\to \infty$ over all $n$ for which $\P\left(\sum_{i=1}^n X_i=-1\right)>0$.
\end{lem}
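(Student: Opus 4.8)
The plan is to prove the statement via a standard large-deviations / subexponentiality argument for lattice random walks. Write $p_n = \P(S_n = -1)$ where $S_n = X_1 + \dots + X_n$. Since $\E{X_1} = 0$ (criticality) and $X_1 \ge -1$, the local central limit philosophy suggests $p_n$ should decay only polynomially when $\mu$ has enough moments, but in full generality $\mu$ may be extremely heavy-tailed, so we cannot hope for a polynomial bound; the assertion $p_n = e^{-o(n)}$ is exactly the ``best possible in general'' statement. The first step is to reduce to a statement about the support: let $d$ be the span of the lattice generated by $\{i+1 : \mu_{i+1} > 0\}$ shifted appropriately, so that $\P(S_n = -1) > 0$ only for $n$ in a fixed residue class (actually, since $\mu_0>0$ gives increment $-1$ in the support, and we need $\sum X_i = -1$, the relevant $n$ are governed by the subgroup generated by the support; the condition $\P(S_n=-1)>0$ already restricts $n$, and we work along that subsequence).

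The core estimate I would use is a sub-multiplicativity trick combined with a truncation. First, fix $K$ large and let $\mu^{(K)}$ be $\mu$ conditioned on $\{0,1,\dots,K\}$ (or a suitable truncation keeping $\mu_0 > 0$); the truncated walk has increments in a bounded range, mean close to $0$, and by the ordinary local CLT satisfies $\P(S_n^{(K)} = -1) \ge c_K/\sqrt{n}$ along the appropriate residues. Then lower-bound $p_n$ by the event that a positive fraction $\delta n$ of the steps are ``large'' (chosen to balance the deficit created by large steps) and the rest behave like the truncated walk — but this is delicate because large steps push $S_n$ up. The cleaner route: use the inequality $p_{n+m} \ge p_n \cdot p_m \cdot (\text{something})$? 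That is false directly, but $\P(S_{n+m} = -2) \ge \P(S_n = -1)\P(S_m=-1)$ by independence, so the sequence $q_n := \P(S_n = -n\cdot 0 \text{-ish})$... Let me instead recall the actual mechanism: the right object is $a_n := -\ln \P(S_n = \lfloor \E{X_1} n\rfloor - O(1))$, and one shows $a_{n}/n \to 0$ using that $a_{n+m} \le a_n + a_m + O(\ln(n+m))$ (superconvolutivity of the local probabilities up to a polynomial correction from the number of ways to split the target), hence by Fekete's subadditive lemma $a_n/n$ converges to its infimum; and the infimum is $0$ because for the truncated chain $a_n^{(K)}/n \to 0$ and one controls the correction from restoring the heavy tail.

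Concretely, the steps I would carry out: (1) establish the near-subadditivity $-\ln p_{n+m} \le -\ln p_n - \ln p_m + C\ln(n+m)$ by decomposing $\{S_{n+m} = -1\}$ via the value of $S_n$: writing $\P(S_{n+m}=-1) \ge \P(S_n = 0)\P(\tilde S_m = -1)$ is too lossy since $\P(S_n=0)$ may itself be small, so instead use $\P(S_{n+m} = -1) = \sum_k \P(S_n = k)\P(S_m = -1-k) \ge \P(S_n = a)\P(S_m = -1-a)$ for the best $a$, which only gives $2p_{n+m} \ge (\max_k \P(S_n=k))(\dots)$ — the clean inequality is $\max_j \P(S_{n+m}=j) \ge (\max_k \P(S_n=k))(\max_\ell \P(S_m=\ell))/(\text{range})$, and then one relates $\max_j \P(S_n = j)$ to $\P(S_n = -1)$ using that $-1$ is near the mean $0$ and a standard ``the local probability near the mean is within a polynomial factor of the maximal local probability'' lemma (provable by a swapping/reflection or by the Erdős–Littlewood–Offord type bound, or simply by noting $\P(S_n=0)$ and $\P(S_n=-1)$ differ by at most a factor $n$ via a single-step surgery using $\mu_0, \mu_1>0$ — wait, we only know $\mu_0>0$; but if only $\mu_0>0$ among small values the lattice is coarse and we handle via span). (2) Apply Fekete to get $-\ln p_n = o(n)$ provided $\liminf -\frac1n\ln(\max_j \P(S_n=j)) = 0$. (3) Prove that last fact by truncation: for the $K$-truncated centered walk, $\max_j \P(S_n^{(K)}=j) \asymp n^{-1/2}$, and $\P(S_n = j) \ge \P(S_n^{(K)} = j, \text{no large jumps}) = \P(S_n^{(K)}=j)(1 - \mu([K+1,\infty)))^n$, but $(1-\mu([K+1,\infty)))^n = e^{-\Theta(n)}$, which only gives $-\frac1n \ln \max_j \P(S_n=j) \le -\ln(1-\mu([K+1,\infty))) \to 0$ as $K\to\infty$ — exactly what we need, since we may take $K\to\infty$ slowly.

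The main obstacle I anticipate is the lattice-span bookkeeping and the step relating $\P(S_n=-1)$ to $\max_j\P(S_n=j)$: in full generality the support of $X_1$ could be, say, $\{-1\} \cup \{$ huge sparse set $\}$, making the reachable values of $S_n$ a complicated set, and one must be careful that ``$n$ such that $\P(S_n=-1)>0$'' is a genuinely infinite set (it is, since $\mu$ is a probability measure, $\mu_0 > 0$, and criticality forces $\sum_{k\ge 1}(k-1)\mu_k = \mu_0 > 0$ so some $k \ge 2$ has $\mu_k > 0$; then $S_{k-1} = -1$ with positive probability for $n = $ multiples related to that $k$, more precisely one builds reachable $n$ by combining $-1$ steps with one $(k-1)$ step). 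I would handle this by working along the subgroup $H \le \Z$ generated by the support of $X_1$ and noting $\P(S_n = -1) > 0$ iff $n$ lies in an explicit eventually-arithmetic-progression set, then running the whole argument (Fekete, truncation) within that progression; the truncated walk, chosen to retain enough of the support to generate the same group, inherits the same set of valid $n$ up to a bounded shift, so the comparison goes through. Everything else is routine; I would not belabor the constants.
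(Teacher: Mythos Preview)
Your proposal has a genuine gap at exactly the point you flag as ``the main obstacle'': the passage from $q_n \coloneqq \max_j \P(S_n=j)$ to $p_n \coloneqq \P(S_n=-1)$. The subadditivity of $-\ln q_n$ is correct (indeed $q_{n+m}\ge q_n q_m$, no range factor needed), and your truncation argument does give $q_n=e^{-o(n)}$. But the asserted ``standard lemma'' that $p_n$ and $q_n$ agree up to polynomial factors is not standard and you do not prove it; the surgery and Erd\H{o}s--Littlewood--Offord ideas you mention do not obviously yield it. The difficulty is that after conditioning on all steps being at most $K$, the truncated walk has strictly negative mean $-\delta_K$, so the target $-1\approx 0$ lies in its \emph{upper} large-deviation regime: your bound $\P(S_n=-1)\ge \P(S_n^{(K)}=-1)\,(1-\mu[K{+}1,\infty))^n$ is of the form $e^{-n(I_K(0)+o_K(1))}$, and to conclude you would still need $I_K(0)\to 0$ as $K\to\infty$. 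That is true (because the truncated mean $\to 0$), and proving it via exponential tilting would rescue your argument without any appeal to $q_n$; but as written the proposal routes through $q_n$ and leaves the comparison unjustified.

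The paper's proof avoids this obstacle by a different decomposition. Rather than conditioning all steps to be small, it lets the large steps occur freely: it fixes a finite set $\cS\subset\Z_+$ large enough to capture the full lattice span, writes $S_n$ as the sum of the ``small'' part $\sum_{i\le N_n} A_i$ (increments with $X_i+1\in\cS$, bounded support, mean $c<0$) and the ``large'' part $\sum_{i\le n-N_n} A'_i$ (mean $c'>0$), and uses the law of large numbers to pin the large part near $(1-p)c'n=-pcn$ with probability $\ge 1/2$. The small-step sum then merely has to hit a target within $O(\delta n)$ of its own mean $N_n c\approx pcn$, which costs at most $e^{-\varepsilon n}$ by the local large-deviation principle for a bounded walk (continuity of the rate function at the mean). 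The criticality identity $pc+(1-p)c'=0$ is precisely what makes the two targets align. This sidesteps any need to compare $p_n$ with $q_n$ or to track $I_K(0)$ as the truncation level varies.
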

This lemma implies that every event that is exponentially unlikely for $\rtree$ is also exponentially likely for $\rtree_n$. Indeed, by the ballot theorem (or equivalently by Proposition~\ref{prop:GWRW} and the fact that the Vervaat transform is an $n$-to-$1$ map), \[\P_\mu(|\rtree|=n)=\frac{1}{n}\P\left(\sum_{i=1}^n X_i=-1\right),\]
which implies that for any set $\cA$ of plane trees,
\[ \P_\mu(\rtree_{n} \in \cA)\leq \frac{\P(\rtree \in \cA)}{\P_\mu(|\rtree|=n)}\leq ne^{o(n)} \P(\rtree \in \cA).\]

\begin{proof}[Proof of Lemma~\ref{lem:probtree}]

In the proof, we only consider $n$ for which $\P\left(\sum_{i=1}^n X_i=-1\right)>0$.

Now, let $r=\operatorname{gcd}(i> 0 :\mu_{i}>0)$,   so that the support of $\sum_{i=1}^n X_i$ is contained in $-n+r\Z$. Then, there is $k\in \N$ such that $r=\operatorname{gcd}(0\leq i\leq  k:\mu_i>0)$. Set $\mathcal{S}=\{0\leq i\leq  k:\mu_i>0\}$ and let $\mathcal{S}'=\{i>k :\mu_i>0\}$. If $\mathcal{S}'=\emptyset$, then we are done, because this in particular implies that $\mu$ has finite variance, so by the local central limit theorem  (see e.g. \cite[Theorem 1 in Sec.~VII]{Petrov}), 
\[ \P\left(\sum_{i=1}^n X_i=-1\right)=\Theta(n^{-1/2})\]
as $n\to \infty$ over all $n$ for which $\P\left(\sum_{i=1}^n X_i=-1\right)>0$.
Therefore, we may assume that $\cS'\neq \emptyset$. 

Set $p=\sum_{i\in \cS} \mu_{i}$. Then for $N_{n}\coloneqq\#\{1\leq i \leq n: X_i+1\in \cS\}$, we see that $N_{n}$ is distributed as a $\operatorname{Binomial}(n,p)$ random variable. Moreover, letting $(A_i,i \ge 1)$ be i.i.d.\ random variables distributed as $X_1$ conditional on $X_1+1\in \cS$ and $(A'_i,i \ge 1)$ be i.i.d.\ random variables distributed as $X_1$ conditional on $X_1+1 \in \cS'$, both sequences independent of one another and of $N_n$, then 
\[
\sum_{i=1}^{N_{n}}A_i+\sum_{i=1}^{n-N_{n}}A'_i \quad \overset{d}{=}  \quad \sum_{i=1}^n X_i
\]
Let $c=\E{A_1}<0$ and $c'=\E{A'_1}>0$ be the means of $A_1$ and $A'_1$ respectively, which satisfy that $pc+(1-p)c'=0$ by the criticality condition.

Fix $\varepsilon>0$. We are done if we show that 
\[\P\left(\sum_{i=1}^n X_i=-1\right)\geq \tfrac{1}{4} e^{-\varepsilon n}\]
for all $n$ large enough. To this end, we first observe that there is $M>0$ such that $\P(|N_{n}-np|<Mn^{1/2})>1/2$ for all $n$. Furthermore, since $\sum_{i=1}^mA_i$ is the sum of i.i.d\ random variables with mean $c$ and finite support, there exists $\delta>0$ such that for all $m_n$ with $|m_n-np|<Mn^{1/2}$ and for all $k_n\in -m_n+ r\Z$ with $|k_n-npc|<\delta n$, for all $n$ large enough,
\begin{equation}\label{eq:A}\P\left(\sum_{i=1}^{m_n}A_i=k_n\right)\geq e^{-\varepsilon n}.\end{equation}

We now check that for $n$ large enough and for all $m_n$ with $|m_n-np|<Mn^{1/2}$
\begin{equation}
\label{eq:Aprime}
 \P \left( \left|\sum_{i=1}^{n-m_n}A'_i +npc +1 \right| \leq \delta n \right) \geq  \frac{1}{2}.
\end{equation}
Then, with probability at least $\tfrac14$, we get that $m_n=N_n$ and $k_n=-1-\sum_{i=1}^{n-m_n}A'_i$ satisfy the conditions of \eqref{eq:A}. (Observe that since $\P(\sum_{i=1}^n X_i=-1)>0$ we have that $n-1\in r\Z$, and  $\sum_{i=1}^{n-m_n}A'_i\in -n+m_n+r\Z$ by definition of $r$, so that $k_n\in n-1-m_n+r\Z=-m_n+r\Z$.) This implies that with probability at least $\tfrac14 e^{-\varepsilon n}$,
it holds that $\sum_{i=1}^{N_{n}}A_i+\sum_{i=1}^{n-N_{n}}A'_i=-1$.

Finally, to establish \eqref{eq:Aprime}, observe that by the law of large numbers, for $|m_n-np|<Mn^{1/2}$ it holds that $|\sum_{i=1}^{n-m_n}A'_i-n(1-p)c'+1|<\delta n$ with probability at least $1/2$ for $n$ large enough, so the fact that $pc+(1-p)c'=0$ implies the statement.
\end{proof}

To establish the bound concerning $\mathsf{H}(\rtree_{n})$, we will also use the following lemma, which states that in $\rtree_n$, all but $o(n)$ of the total degree is at bounded degree nodes with high probability. This in particular implies that the largest degree is $o(n)$ in probability. 

\begin{lem}\label{lem:nomasshighdegrees}
For every $\beta,\varepsilon \in (0,1)$, for $D_1,\dots, D_n$ the degrees in $\rtree_n$ there exists $K>0$ such that for $n$ large enough,
\[\P\left(\sum_{i=1}^n D_i\I{D_i> K}\leq \beta n \right)>1-\varepsilon.\]
\end{lem}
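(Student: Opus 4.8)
The plan is to transfer the corresponding statement about the unconditioned tree $\rtree$ to $\rtree_n$ using the polynomial cost bound coming from Lemma~\ref{lem:probtree}, but this naive approach is too lossy: the event we want has probability close to $1$, not exponentially small, so multiplying by $n e^{o(n)}$ is useless. Instead, I would work directly with the conditioned degree sequence via Remark~\ref{rk:degrees}: the multiset of degrees of $\rtree_n$ has the law of $\{X_i+1 : 1\le i\le n\}$ conditioned on $S_n=-1$, so it suffices to control $\sum_{i=1}^n (X_i+1)\I{X_i+1>K}$ under this conditioning. Write $f_K(x)=(x+1)\I{x+1>K}$ and note $\E{f_K(X_1)} = \sum_{k>K} k\mu_k \to 0$ as $K\to\infty$ since $\mu$ has mean $1$ (so $\sum_k k\mu_k=1<\infty$). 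Hence, fixing $\beta\in(0,1)$, choose $K$ large enough that $\E{f_K(X_1)}<\beta/3$, say.

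The heart of the argument is then a conditional weak law of large numbers: I would show that $\frac{1}{n}\sum_{i=1}^n f_K(X_i) \to \E{f_K(X_1)}$ in probability \emph{under the conditioning} $\{S_n=-1\}$. The clean way to do this is truncation. Split $f_K(X_i)=f_K(X_i)\I{X_i\le M}+f_K(X_i)\I{X_i>M}$ for a large threshold $M$. For the bounded part, the variables $f_K(X_i)\I{X_i\le M}$ are bounded, so $\sum_i f_K(X_i)\I{X_i\le M}$ concentrates around its mean with, say, Hoeffding/Azuma-type exponential deviations of the form $e^{-c\delta^2 n/M^2}$; combined with $\P(S_n=-1)=e^{-o(n)}$ from Lemma~\ref{lem:probtree}, the conditional deviation probability still goes to $0$. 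For the unbounded tail part, I bound $\E{\sum_i f_K(X_i)\I{X_i>M} \mid S_n=-1}$: here I would use a change-of-measure / union-bound argument, writing
\[
\E{\sum_{i=1}^n f_K(X_i)\I{X_i>M}\,\Big|\, S_n=-1} = \frac{n}{\P(S_n=-1)}\,\E{f_K(X_1)\I{X_1>M}\,\P(S_{n-1}=-1-X_1)}.
\]
Using $\P(S_{n-1}=\cdot)\le \P(S_{n-1}=-1) \cdot e^{o(n)}$ is again too crude pointwise, so instead I would invoke the standard uniform bound $\sup_{j}\P(S_{n-1}=j)\le C/\P_\mu(|\rtree|=n-1)\cdot(\text{something})$ — more simply, one can avoid this entirely by choosing $M$ depending only on $K$ and $\beta$ so that $\E{f_K(X_1)\I{X_1>M}}$ is tiny, then noting that the contribution of indices $i$ with $X_i>M$ is, with probability $\to 1$, carried by at most $o(n)$ indices, and on the event $S_n=-1$ the partial sums are constrained so that no single index contributes more than $O(n)$; a cleaner route is to use Remark~\ref{rk:degrees} together with the already-established fact (Lemma~\ref{lem:nomasshighdegrees} is used to derive it, so instead) that $\sum_i X_i^+ \le n + \sum_i X_i^- $ combined with $S_n = -1$ forces $\sum_i (X_i+1)\I{X_i+1>1} \le 2n$ deterministically, giving an a priori linear bound that makes the truncated-tail estimate routine.

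So the steps in order: (1) reduce to the unconditioned i.i.d.\ statement via Remark~\ref{rk:degrees}; (2) fix $K$ with $\E{f_K(X_1)}<\beta/3$; (3) choose $M=M(K,\beta)$ with $\E{f_K(X_1)\I{X_1>M}}<\beta/3$; (4) prove concentration of the bounded part $\sum_i f_K(X_i)\I{X_i\le M}$ around its mean using a bounded-difference inequality, and divide by $\P(S_n=-1)=e^{-o(n)}$ (Lemma~\ref{lem:probtree}) to survive the conditioning; (5) control the unbounded part $\sum_i f_K(X_i)\I{X_i>M}$ in conditional expectation via Markov, using the a priori deterministic bound $\sum_i (X_i+1)\I{X_i+1>1}\le 2n$ on $\{S_n=-1\}$ to tame indices with very large $X_i$, or directly via a first-moment computation after another application of Lemma~\ref{lem:probtree}; (6) combine: with probability $>1-\varepsilon$, $\sum_i D_i\I{D_i>K} \le (\beta/3 + \beta/3 + \beta/3) n = \beta n$. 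The main obstacle is step (5): making the heavy-tail contribution negligible \emph{after conditioning on $\{S_n=-1\}$} without a usable local limit theorem for $S_n$ (which need not hold here), which is why I expect the cleanest fix to be the deterministic a priori linear bound on $\{S_n=-1\}$ plus a truncation argument, rather than a moment computation that would require pointwise control of $\P(S_{n-1}=j)$.
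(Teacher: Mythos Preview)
Your plan has the right scaffolding (reduce to the i.i.d.\ picture via Remark~\ref{rk:degrees}, use Lemma~\ref{lem:probtree} to absorb the conditioning cost, concentrate a bounded sum), but step~(5) as you describe it does not close. The ``deterministic a priori linear bound'' you invoke on $\{S_n=-1\}$ is just $\sum_i (X_i{+}1)\I{X_i+1>1}\le \sum_i(X_i{+}1)=n-1$, which only shows the heavy-tail piece is $\le n-1$, not $\le \beta n/3$; and the alternative ``at most $o(n)$ indices, each contributing $O(n)$'' gives $o(n^2)$, again useless. The first-moment route you sketch, $\frac{n}{\P(S_n=-1)}\E{f_K(X_1)\I{X_1>M}\,\P(S_{n-1}=-1-X_1)}$, would require a uniform bound $\sup_j \P(S_{n-1}=j)\le C\,\P(S_n=-1)$, which you correctly note is unavailable here. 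So as written, the heavy-tail part is not controlled.

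The missing idea---which is exactly what the paper does---is to pass to the \emph{complement}: on $\{S_n=-1\}$ you have the exact identity $\sum_i D_i\I{D_i>K}=(n-1)-\sum_i D_i\I{D_i\le K}$, so it suffices to show $\sum_i (X_i{+}1)\I{X_i{+}1\le K}\ge (1-\beta)n$ except with exponentially small (unconditioned) probability. Now the summands are \emph{bounded by $K$}, so a Chernoff/Hoeffding bound applies directly (the paper even reduces further to counting occurrences of each value $k\le K$, which are Binomial$(n,\mu_k)$), and dividing by $\P(S_n=-1)=e^{-o(n)}$ finishes. This makes your entire truncation-at-$M$ detour and step~(5) unnecessary: once you flip to the complement, you are bounding a sum of bounded variables from below and there is no heavy tail to tame.
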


\begin{proof}
Fix $\beta \in (0,1)$. Let $(X_i,i \ge 1)$ be i.i.d.\ samples with law $\P(X_1=i)=\mu_{i+1}$. By Remark \ref{rk:degrees},  the multiset $\{D_1,\ldots,D_n\}$ of degrees in $\rtree_n$ has the same law as the conditional law of the multiset $\{X_i+1,1 \le i \le n\}$ given that $\sum_{i=1}^n (X_i+1)=n-1$. 

We will first consider $(X_1+1,\dots,X_n+1)$ before conditioning: we will show that there exist $\delta>0$ and $K>0$ so that for all $n$ large enough, 
\begin{equation}\label{eq:meanlargeiidcase}\P\left(\sum_{i=1}^n (X_i+1) \I{X_i\leq  K-1}\leq (1-\beta) n \right) <e^{-\delta n}.\end{equation}
Since $\mu$ is critical, there is $K\in \N$ such that $\sum_{k=1}^K k\mu_k>1-\beta/2$. For such $K$ we have 
\[\left\{ \sum_{i=1}^n (X_i+1) \I{X_i\leq  K-1}\leq (1-\beta) n\right \} \subseteq \bigcup_{\{k\in [K]:\mu_k>0\}} \left\{  \sum_{i=1}^n \I{X_i= k-1} \leq 
\frac{1-\beta}{1-\beta/2} \mu_k n \right\}.\]
{ Indeed, if $\sum_{i=1}^n \I{X_i= k-1} > (1-\beta)\mu_k n/({1-\beta/2})$ for all $k\in [K]$ for which $\mu_k>0$, then 
\begin{align*}
\sum_{i=1}^n (X_i+1) \I{X_i\leq  K-1}&=\sum_{k=1}^K k\sum_{i=1}^n\I{X_i= k-1}\\
&\ge \sum_{k=1}^K k \frac{1-\beta}{1-\beta/2} \mu_k n\\
&> (1-\beta )n .
\end{align*}
}
But, for each $k$, $ \sum_{i=1}^n \I{X_i= k-1}$ is distributed as a $\operatorname{Binomial}(n,\mu_k)$ random variable, so  for each $k\leq K$ for which $\mu_k>0$, there is $\delta_k>0$ such that 
\[\P\left(\sum_{i=1}^n \I{X_i= k-1} \leq 
\frac{1-\beta}{1-\beta/2} \mu_k n\right) <e^{-\delta_kn} \]
for all $n$ large enough. Then a union bound implies \eqref{eq:meanlargeiidcase} with, e.g., $\delta=\tfrac{1}{2}\min_{\{k\in [K]:\mu_k>0\}}\{\delta_k\}$.

 We now show how the statement follows from \eqref{eq:meanlargeiidcase} and Lemma~\ref{lem:probtree}. By Lemma~\ref{lem:probtree}, for all $n$ large enough, $\P(\sum_{i=1}^n (X_i+1)=n-1)>e^{-\delta n/2} $, so 
\begin{align*}
\P\left(\sum_{i=1}^n D_i\I{D_i> K}> \beta n \right)&= \P\left(\sum_{i=1}^n (X_i+1) \I{X_i+1\leq  K}<n-1-\beta n \mid \sum_{i=1}^n (X_i+1)=n-1 \right)\\
&\leq \frac{\P\left(\sum_{i=1}^n (X_i+1) \I{X_i\leq  K-1}\leq (1-\beta) n \right) }{\P(\sum_{i=1}^n X_i=-1)}<e^{-\delta n/2},\end{align*}
which implies the desired result.

\end{proof}

{\subsection{Critical trees are not too fat}
Here we establish the width bound in Theorem \ref{thm:height_critical0}}. We may assume throughout that $\mu_0+\mu_1<1$ since if $\mu_0+\mu_1=1$ then the assertions of the theorem are immediate. We start with the bound concerning $\mathsf{Width}(\rtree_{n})$.
Recall that $S_m=X_1+\ldots+X_m$ for $m \ge 1$, where $(X_i,i \ge 1)$ are i.i.d.\ with $\p(X_1=k)=\mu_{k+1}$ for $k \ge -1$. 
\begin{prop}\label{prop:width_bound}
For all $\eps > 0$ there exists $\delta > 0$ such that for all $n$ sufficiently large,
\[
\Cprob{\max_{0 \le m \le n} |S_m| \ge \eps n}{S_n=-1} \le e^{-\delta n}.
\]
\end{prop}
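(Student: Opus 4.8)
The plan is to remove the conditioning using Lemma~\ref{lem:probtree} and reduce to an unconditioned tail estimate. Since $\p(S_n=-1)=e^{-o(n)}$, it is enough to exhibit a constant $c>0$, depending only on $\varepsilon$, such that $\p\big(\max_{0\le m\le n}|S_m|\ge\varepsilon n,\ S_n=-1\big)\le e^{-cn}$ for all large $n$; dividing by $\p(S_n=-1)$ then yields the proposition with any $\delta<c$.

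The structural point is that, although the increments $X_i$ may have a heavy right tail --- so that $\p(\max_{0\le m\le n}S_m\ge\varepsilon n)$ need not be exponentially small without the conditioning --- they satisfy $X_i\ge-1$. Hence $\E{e^{-\theta X_1}}\le e^{\theta}<\infty$ for every $\theta\ge0$, and since $\mu$ has unit mean we have $\E{X_1}=0$ and $\E{X_1\I{X_1\ge0}}=\sum_{j\ge1}(j-1)\mu_j\le1<\infty$, which together give $\E{e^{-\theta X_1}}=1+o(\theta)$ as $\theta\downarrow0$ (split the expectation into its positive and negative parts and use monotone, resp.\ dominated, convergence). The usual Chernoff argument then produces, for each $\varepsilon>0$, a constant $c(\varepsilon)>0$ with $\p(S_k\le-\varepsilon k)\le e^{-c(\varepsilon)k}$ for all $k\ge1$: downward moderate deviations are universally exponentially rare.

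Granting this, I would split the event $\{\max_{0\le m\le n}|S_m|\ge\varepsilon n\}$ into $\{\min_{0\le m\le n}S_m\le-\varepsilon n\}$ and $\{\max_{0\le m\le n}S_m\ge\varepsilon n\}$. For the first, a union bound over $0\le k\le n$, the bound $S_k\ge-k$, and the inequality $-\varepsilon n\le-\varepsilon k$ for $k\le n$ reduce it to $\sum_{k\ge\lceil\varepsilon n\rceil}\p(S_k\le-\varepsilon k)$, which is a geometric sum of order $e^{-c(\varepsilon)\varepsilon n}$; the conditioning is not even needed here. For the second event, which is the heart of the matter, the conditioning is exactly what makes it exponentially small: on $\{S_k\ge\varepsilon n\}\cap\{S_n=-1\}$ the increments $X_{k+1},\dots,X_n$ must sum to $S_n-S_k\le-1-\varepsilon n$, and since they are independent of $S_k$ and distributed as $S_{n-k}$, bounding a point probability by a tail probability gives $\p(S_k\ge\varepsilon n,\ S_n=-1)\le\p(S_{n-k}\le-1-\varepsilon n)\le\p\big(S_{n-k}\le-\varepsilon(n-k)\big)$. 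This vanishes unless $n-k\ge1+\varepsilon n$, and is otherwise at most $e^{-c(\varepsilon)(n-k)}$ by the previous paragraph; summing the geometric series over $k$ (the terms $k=0$ and $k=n$ vanishing trivially) again gives a bound of order $e^{-c(\varepsilon)\varepsilon n}$. Adding the two contributions and dividing by $\p(S_n=-1)\ge e^{-c(\varepsilon)\varepsilon n/2}$, valid for large $n$ by Lemma~\ref{lem:probtree}, completes the argument.

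The only mildly delicate point is the uniform Cram\'er bound of the second paragraph in the case where $\mu$ has infinite variance: there one cannot Taylor-expand $\theta\mapsto\E{e^{-\theta X_1}}$ to second order, and must instead argue directly that $\E{e^{-\theta X_1}}=1+o(\theta)$, which is where the finiteness of the mean of $\mu$ (equivalently $\E{X_1\I{X_1\ge0}}<\infty$) is used. Everything else is a routine union bound and geometric summation; note in particular that Lemma~\ref{lem:nomasshighdegrees} is not required for this proposition.
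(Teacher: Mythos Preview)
Your proof is correct, and in fact cleaner than the paper's. Both arguments share the same skeleton: use Lemma~\ref{lem:probtree} to remove the conditioning, bound $\{\min_m S_m\le-\varepsilon n\}$ directly, and convert $\{\max_m S_m\ge\varepsilon n\}\cap\{S_n=-1\}$ into a left-tail event for $S_{n-k}$ via the Markov property. The difference lies in how the left-tail bound $\p(S_k\le-\varepsilon k)\le e^{-c(\varepsilon)k}$ is obtained.

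The paper proceeds by truncation: it fixes $K$ with $\E{X_1\I{X_1\le K}}>-\varepsilon/2$, applies a Chernoff bound to the \emph{bounded} variables $X_i^{\le K}$ to get $\p(S_n^{\le K}<-\varepsilon n)\le e^{-2\delta n}$, and then bootstraps this to a bound uniform in $m\in[n]$ via the observation that $\p(S_m^{\le K}\le0)\ge c>0$ (which requires the support of $\mu$ to be unbounded, forcing a separate treatment of the bounded case via the local CLT). You instead observe that $X_1\ge-1$ makes $\theta\mapsto\E{e^{-\theta X_1}}$ finite on $[0,\infty)$, and that $\E{X_1}=0$ together with $\E{X_1^+}<\infty$ gives $\E{e^{-\theta X_1}}=1+o(\theta)$ by dominated convergence; a single Chernoff bound then yields the left-tail estimate directly, with no truncation, no case split on the support of $\mu$, and no bootstrapping step.

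Your route is more elementary and buys a uniform treatment; the paper's route has the minor advantage that the Chernoff step only ever involves bounded random variables, so one never needs the (easy but slightly delicate) $o(\theta)$ computation. One small comment: your parenthetical ``monotone, resp.\ dominated, convergence'' is a bit imprecise---both parts are handled by dominated convergence, the dominator on $\{X_1\ge0\}$ being $X_1$ itself (using $|(e^{-\theta x}-1)/\theta|\le x$ for $x\ge0$), and the part on $\{X_1=-1\}$ being trivial.
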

\begin{proof}
If $\mu$ has bounded support then by a Chernoff bound, for all $\eps >0$ there is $\delta > 0$ such that 
$\p(\max_{0 \le m \le n} |S_m| \ge \eps n) \le e^{-2\delta n}$ for all $n$ sufficiently large. In this case, by the local central limit theorem we also have that $\p(S_n=-1)=\Theta(n^{-1/2})$ as $n \to \infty$ (along values $n$ for which $\p(S_n=-1)\ne 0$). Thus, 
\[
\Cprob{\max_{0 \le m \le n} |S_m| \ge \eps n}{S_n=-1}=O(\sqrt{n}e^{-2\delta n})=o(e^{-\delta n})\, ,
\]
which implies the result in this case. 
We may thus assume that $\mu$ has unbounded support. 

We will use below that $\p(S_n = -1) = e^{-o(n)}$ as $n \to \infty$ along values with $\p(S_n=-1)>0$; see Lemma~\ref{lem:probtree}. 

For $K \in \N$ it is convenient to write 
$X_i^{\le K}=X_i \I{X_i \le K}$ and $X_i^{>K}=X_i-X_i^{\le K}$, and also to write $S_m^{\le K}=\sum_{i=1}^m X_i^{\le K}$ and $S_m^{>K}=S_m-S_m^{\le K}$. 

Fix $\eps > 0$. Since the $X_i$ are centred, we may choose $K=K(\eps)$ large enough that $\E{X_i\I{X_i \le K}} >-\eps/2$. Then 
$\E{\sum_{i=1}^n X_i^{\le K}} > - \varepsilon n/2$. Since the random variables $X_i^{\le K}$ are bounded, by a Chernoff bound there exists $\delta=\delta(\eps)>0$ such that 
\[
\P(S_n^{\le K} < -\eps n) < \delta^{-1}e^{-2\delta n}
\]
for all $n\ge 1$. 
Also, the random variables $X_i^{\le K}$ are bounded and (since the support of $\mu$ is unbounded) have strictly negative mean. Thus there is $c > 0$ such that for all $m \ge 1$, 
\[
\P(S_m^{\le K} \le 0) \ge c.
\]
By the Markov property this implies that for $0 \le m \le n$, 
\begin{align*}
\probC{S_n^{\le K} \le -\eps n}{S_m^{\le K} \le -\eps n}
 & 
 \ge \p(S_{n-m}^{\le K} \le 0) \ge c\, ,
\end{align*}
so for $m \in [n]$ we have
\[
\p(S_m^{\le K} \le -\eps n) \le (c\delta)^{-1} e^{-2\delta n}\, .
\]
Again using the Markov property, this implies that 
\begin{align*}
\P(S_m \ge \eps n,S_n=-1)
& \le \P(S_{n-m} \le -\eps n) 
 \le \P(S_{n-m}^{\le K} \le -\eps n)
 \le (c\delta)^{-1}e^{-2\delta n}
\end{align*}
Since $\p(S_n = -1) = e^{-o(n)}$, the above bound yields that 
\[
\probC{S_m \ge \eps n}{S_n=-1} \le (c\delta)^{-1}e^{-2\delta n} \cdot e^{o(n)}\,
\]
and a union bound over $m \in [n]$ then gives that 
\[
\probC{\max_{0 \le m \le n} S_m \ge \eps n}{S_n=-1} 
\le e^{-\delta n}/2
\]
for all $n$ sufficiently large. 

Bounding $\min_{0 \le m \le n} S_m$ is similar: for $m \in [n]$ we have we have 
\begin{align*}
\p(S_m \le -\eps n,S_n=-1)
& \le \p(S_m \le -\eps n) \le 
\p(S_m^{\le K} \le -\eps n)
\le (cd)^{-1} e^{-2\delta n}\, .
\end{align*}
Just as above, dividing through by $\p(S_n=-1)$, using that $\p(S_n = -1) = e^{-o(n)}$, and taking a union bound gives that 
\[
\probC{\min_{0 \le m \le n} S_m \le -\eps n}{S_n=-1} 
\le e^{-\delta n}/2 
\]
for $n$ large; the result follows. 
\end{proof}
The width bound of Theorem~\ref{thm:height_critical0} now follows straightforwardly.
\begin{cor}\label{cor:width_bound}
For any critical offspring distribution $\mu$, it holds that $\mathsf{Width}(\rtree_n)=o_{\p}(n)$. 
\end{cor}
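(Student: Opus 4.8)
The plan is to reduce the width bound to Proposition~\ref{prop:width_bound} through the lattice‑path encoding. The case $\mu_0+\mu_1=1$ is trivial, since then $\rtree_n$ is a path and $\mathsf{Width}(\rtree_n)=1$, so assume $\mu_0+\mu_1<1$. First, by \eqref{eq:width} we have $\mathsf{Width}(\rtree_n)\le \max_{0\le i\le n}\W^{\mathsf{bfs}}_i(\rtree_n)$, and by \eqref{eq:condition_identity} the path $(\W^{\mathsf{bfs}}_i(\rtree_n),0\le i\le n)$ has the law of $\mathcal{V}(S_0,\dots,S_n)$ conditioned on $\{S_n=-1\}$. So it suffices to control the maximum of the Vervaat transform of the walk bridge.

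The key deterministic fact is that the Vervaat transform cannot produce a value larger than the range of the original path: for any discrete bridge $\mathbf{s}=(s_0,\dots,s_n)$,
\[
\max_{0\le i\le n}\mathcal{V}(\mathbf{s})_i\ \le\ \max_{0\le j\le n}s_j-\min_{0\le j\le n}s_j\ \le\ 2\max_{0\le j\le n}|s_j|.
\]
Indeed, $\mathcal{V}(\mathbf{s})$ is obtained by reading the increments of $\mathbf{s}$ cyclically starting near the location $m$ of the minimum, so before the reading wraps around one has $\mathcal{V}(\mathbf{s})_i=s_k-s_{m-1}$ for some index $k$, and after wrapping $\mathcal{V}(\mathbf{s})_i=(s_n-s_{m-1})+s_{k'}$ for some index $k'$; since $s_{m-1}\ge\min_j s_j$ and $s_n=-1<0$, in either case $\mathcal{V}(\mathbf{s})_i\le\max_j s_j-\min_j s_j$.

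Combining these two observations, for every $\eps>0$ we obtain
\[
\p(\mathsf{Width}(\rtree_n)\ge\eps n)\ \le\ \Cprob{\max_{0\le j\le n}|S_j|\ge \eps n/2}{S_n=-1}\ \le\ e^{-\delta n}
\]
for all $n$ sufficiently large, where $\delta=\delta(\eps/2)>0$ is furnished by Proposition~\ref{prop:width_bound}. Since the right‑hand side tends to $0$ for each fixed $\eps>0$, this gives $\mathsf{Width}(\rtree_n)/n\to 0$ in probability, i.e.\ $\mathsf{Width}(\rtree_n)=o_{\p}(n)$. There is essentially no obstacle here: all of the real content sits in Proposition~\ref{prop:width_bound}, and the only point to verify is the elementary range bound for the Vervaat transform above — one should pin down the precise off‑by‑one in the definition of $\mathcal{V}$, but this only affects additive constants and is immaterial for an $o_{\p}(n)$ conclusion.
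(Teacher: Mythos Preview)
Your proof is correct and follows essentially the same route as the paper: bound the width by the maximum of $\W^{\mathsf{bfs}}$ via \eqref{eq:width}, identify its law with the Vervaat transform of the walk bridge via \eqref{eq:condition_identity}, use the deterministic range bound for $\mathcal{V}$, and invoke Proposition~\ref{prop:width_bound}. The only cosmetic differences are that the paper states the range identity as an equality $\max_i \mathcal{V}(\mathbf{s})_i = \max_j s_j - \min_j s_j$ while you use the inequality (and the further bound $2\max_j|s_j|$), and the off-by-one you flag is indeed immaterial.
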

\begin{proof}
Let $\zeta=\inf(i \ge 1: S_i=-1)$. 
By \eqref{eq:condition_identity}, for any discrete excursion $\mathbf{w}=(w_0,w_1,\ldots,w_n)$, 
\[
\p((\W^{\mathsf{bfs}}_{i}( \rtree_n),0 \le i \le n)=\mathbf{w})
=
\Cprob{\mathcal{V}(S_i,0 \le i \le n)=\mathbf{w}}{S_n=-1}
\]
By \eqref{eq:width} we also have 
\[ 
\textsf{Width}(\rtree_n) \leq \max (\W^{\mathsf{bfs}}_m(\rtree_n),0 \le m \le n). 
\]
Finally, note that for any discrete bridge $\mathbf{s}=(s_0,s_1,\ldots,s_n)$, it holds that 
\[
\max(\mathcal{V}(\mathbf{s})_m,0 \le m \le n) = \max(s_m,0 \le m \le n)-\min(s_m,0 \le m \le n).\]
Proposition~\ref{prop:width_bound} now implies that for any $\eps > 0$, there is $\delta > 0$ such that 
\begin{align*}
\p(\textsf{Width}(\rtree_n) \ge 2\eps n)
& \le \p(\max (\W^{\mathsf{bfs}}_m(\rtree_n),0 \le m \le n) \ge 2\eps n) \\
& \le \Cprob{\max_{0 \le m \le n} S_m-\min_{0 \le m \le n} S_m\ge 2\eps n}{S_n=-1} \\
& \le \Cprob{\max_{0 \le m \le n} |S_m| \ge\eps n}{S_n=-1} \le e^{-\delta n}. \qedhere
\end{align*}
\end{proof}

\subsection{Bijection between rooted trees and sequences}
Our {proof for the height bound in Theorem \ref{thm:height_critical0}} is in part based on a bijection between rooted trees on $[n]$ and sequences in $[n]^{n-1}$ introduced by Foata and Fuchs \cite{FoataFuchs}, which we recall here.  (See also the recent note by the first and second author, Blanc-Renaudie, Maazoun and Martin on probabilistic applications of the bijection \cite{cayley_us} and the paper by the first and second author in which we use the bijection to obtain upper bounds on the height of random trees \cite{AD22}.) 

For a rooted tree $\tree$ and a vertex $v$ of $\tree$, the {\em degree} $d_\tree(v)$ of $v$ is the number of children of $v$ in $\tree$. A {\em leaf} of $\tree$ is a vertex of $\tree$ with degree zero. 

A {\em degree sequence} is a sequence of non-negative integers $\dseq=(d_1,\ldots,d_n)$ with $\sum_{i \in [n]} d_i = n-1$. We write $\cT_{\dseq}$ for the set of finite rooted labeled trees $\tree$ with vertex set labeled by $[n]$ and such that for each $i \in [n]$, the vertex with label $i$ has degree $d_i$. 
Also write $\cT(n)$ for the set of all finite rooted labeled trees with vertex set labeled by $[n]$. For $\tree \in \cT(n)$, we write $d_\tree(i)$ for the degree of the vertex with label $i$ and  say that $(d_\tree(1),\ldots,d_\tree(n))$ is the degree sequence of $\tree$. 

We use the version of the bijection introduced in \cite[Section 3]{cayley_us}, specialized to trees with a fixed degree sequence and we closely follow the presentation of the bijection in \cite{AD22}.

The following proposition from \cite{AD22} is the tool that allows us to transfer results from the setting of random trees with given degree sequences to that of Bienaym\'e trees. Recall that $\rtree_n$ denotes a $\mu$-Bienaym\'e tree conditioned to have size $n$.

\begin{prop}[Proposition 8 in \cite{AD22}]\label{prop:planetrees}
Conditionally given $\rtree_{n}$, let $\hat{\rtree}_{n}$ be the random tree in $\cT(n)$ obtained as follows: 
label the vertices of $\rtree_{n}$ by a uniformly random permutation of $[n]$, then forget about the plane structure. 
For $i \in [n]$ let $D_i$ be the degree of vertex $i$ in $\hat{\rtree}_{n}$. Then for any degree sequence $\dseq=(d_1,\ldots,d_n)$, conditionally given that $(D_1,\ldots,D_n)=(d_1,\ldots,d_n)$, the tree $\hat{\rtree}_{n}$ is a uniform element in $\cT_{\dseq}$. 
\end{prop}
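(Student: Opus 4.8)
The plan is to compute the law of $\hat{\rtree}_n$ explicitly and to observe that it depends on the sampled tree only through its degree sequence. First I would use \eqref{eq:def_GW}: the size-conditioned tree satisfies $\P_\mu(\rtree_n = \tree) = \P_\mu(|\rtree| = n)^{-1}\prod_{u \in \tree}\mu_{k_u(\tree)}$ for every plane tree $\tree$ with $n$ vertices. Writing $\sigma$ for the uniform random bijection from the vertex set of $\rtree_n$ to $[n]$ used in the construction, it follows that for every fixed plane tree $\tree$ with vertex set $V(\tree)$ and every bijection $\sigma_0 \colon V(\tree) \to [n]$,
\[
\p\big((\rtree_n,\sigma) = (\tree,\sigma_0)\big) = \frac{1}{n!\,\P_\mu(|\rtree| = n)}\prod_{u \in \tree}\mu_{k_u(\tree)},
\]
because $|V(\tree)| = n$ and hence the set of such $\sigma_0$ has size $n!$ irrespective of $\tree$.

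The second step is to analyse the deterministic map $\Phi$ sending a pair $(\tree,\sigma_0)$ as above to the labelled rooted tree $t \in \cT(n)$ obtained by transporting the parent--child relation of $\tree$ along $\sigma_0$ and forgetting the planar order, so that $\hat{\rtree}_n = \Phi(\rtree_n,\sigma)$; here, for $t \in \cT(n)$ I write $d_t(i)$ for the degree (number of children) of vertex $i$ in $t$. The key combinatorial point is that the fibre $\Phi^{-1}(t)$ is in bijection with the set of \emph{planar embeddings} of $t$, i.e.\ with the choices of a linear order on the children of each vertex of $t$: such an ordering yields (via Neveu's formalism) a plane tree together with a labelling whose image under $\Phi$ is $t$, and conversely each $(\tree,\sigma_0) \in \Phi^{-1}(t)$ induces, through $\sigma_0$, such an ordering, these two operations being mutually inverse. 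Hence $|\Phi^{-1}(t)| = \prod_{i \in [n]} (d_t(i))!$. Moreover, forgetting the planar order preserves the parent--child relation, so $k_{\sigma_0^{-1}(i)}(\tree) = d_t(i)$ for all $i$ whenever $(\tree,\sigma_0) \in \Phi^{-1}(t)$, whence $\prod_{u \in \tree}\mu_{k_u(\tree)} = \prod_{i \in [n]}\mu_{d_t(i)}$ is constant along the fibre.

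Summing the displayed identity over the fibre $\Phi^{-1}(t)$ then gives, for every $t \in \cT(n)$,
\[
\p(\hat{\rtree}_n = t) = \frac{1}{n!\,\P_\mu(|\rtree| = n)}\prod_{i \in [n]}\mu_{d_t(i)}\,(d_t(i))!,
\]
an expression depending on $t$ only through its degree sequence $(d_t(1),\dots,d_t(n))$. In particular this probability takes the same value at every $t \in \cT_{\dseq}$, so conditioning on the event $\{(D_1,\dots,D_n) = \dseq\}$ --- which has positive probability exactly when $\P_\mu(|\rtree| = n) > 0$ and $\mu_{d_i} > 0$ for all $i \in [n]$, the only case in which the conditioning is meaningful --- yields precisely the uniform distribution on $\cT_{\dseq}$, as claimed.

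I do not anticipate a genuine obstacle; the single point requiring care is the identification of $\Phi^{-1}(t)$ with the set of planar embeddings of $t$. This is the standard fact that a plane tree amounts to a rooted tree equipped with an ordering of the children of each vertex, together with the immediate observation that passing from a plane tree to its underlying rooted tree leaves the number of children of each (labelled) vertex unchanged. All remaining steps are routine.
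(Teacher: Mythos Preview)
Your argument is correct: computing the law of the labelled tree $\hat{\rtree}_n$ via the fibre count $|\Phi^{-1}(t)|=\prod_i (d_t(i))!$ and observing that the resulting probability depends on $t$ only through its degree sequence is exactly the standard route to this fact. Note that the paper itself does not supply a proof of this proposition; it is quoted from \cite{AD22}, so there is no ``paper's own proof'' to compare against, but your approach is precisely the one underlying the cited result.
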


We will now explain the bijection. It is convenient to focus on degree sequences with a particular form. Given a degree sequence $\dseq=(d_1,\ldots,d_n)$, define another degree sequence $\dseq'=(d_1',\ldots,d_n')$ as follows. 
Let $m$ be the number of non-zero entries of $\dseq$; necessarily $1 \le m \le n-1$. List the non-zero entries of $\dseq$ in order of appearance as $d_1',\ldots,d_m'$, and then set $d_{m+1}'=\ldots=d_n'=0$. So, for example, if $\dseq=(1,0,3,0,0,2,0)$ then $\dseq'=(1,3,2,0,0,0,0)$. Say that the degree sequence $\dseq'$ is {\em compressed}. (So a degree sequence is compressed if all of its non-zero entries appear before all of its zero entries.) 

There is a natural bijection between $\cT_{\dseq}$ and $\cT_{\dseq'}$: from a tree $\tree\in \cT_{\dseq}$, construct a tree $\tree' \in \cT_{\dseq'}$ by relabeling the non-leaf vertices of $\tree$ as $1,\ldots,m$ and the leaves of $\tree$ as $m+1,\ldots n$, in both cases in increasing order of their original labels. 
Using this bijection provides a coupling $(\rtree,\rtree')$ of uniformly random elements of $\cT_{\dseq}$ and $\cT_{\dseq'}$, respectively, such that $\rtree$ and $\rtree'$ have the same height. It follows that any bound on the height of a uniformly random tree in $\cT_{\dseq'}$ applies {\em verbatim} to the height of a uniformly random tree in $\cT_{\dseq}$. 

Now, let $\dseq=(d_1,\dots,d_n)$ be a compressed degree sequence, so there is $1 \le m \le n-1$ such that $d_i=0$ if and only if $i>m$.  Write $n_0=n_0(\dseq)=n-m$ for the number of leaves in a tree with degree sequence $\dseq$.  Then define
\begin{align*}
\cS_{\dseq}&\coloneqq\left\{(v_1,\dots, v_{n - 1}):|\{k:v_k=i\}|=d_i\text{ for all }i\in [n]\right\}\, .
\end{align*}
For example, if $\dseq=(1,3,2,0,0,0,0)$ then $\cS_{\dseq}$ is the set of all permutations of the vector $(1,2,2,2,3,3)$, so has size ${6 \choose 1,3,2} = 60$. 

The following bijection between $\cS_{\dseq}$ and $\cT_{\dseq}$ appears in \cite[Section 3]{cayley_us}. See also Figure~\ref{fig:bij}. For $\sequence{v}=(v_1,\dots, v_{n-1}) \in \cS_{\dseq}$, we say that $j\in \{2,\dots, n-1\}$ is the location of a repeat of $\sequence{v}$ if 
$v_j=v_i$ for some $i<j$. 
\begin{tcolorbox}[title=Bijection $\tree$ between $\cS_{\dseq}$ and $\cT_{\dseq}$.]
	\begin{itemize}
		\item Let $j(0)=1$, let $j(1)<j(2)<\dots<j(n_0-1)$ be the locations of the repeats of the sequence $\sequence{v}$, and let $j(n_0)=m+n_0=n$.
		\item For $i=1,\dots, n_0$, let $P_i$ be the path
		$(v_{j(i-1)}, \dots, v_{j(i)-1}, m+i)$.
		\item Let $\tree(\sequence{v}) \in \cT_{\dseq}$ have root $v_1$ and edge set given by the union of the edge sets of the paths
		$P_1, P_2, \dots, P_{n_0}$.
	\end{itemize}
\end{tcolorbox}
\begin{figure}
\centering
\includegraphics{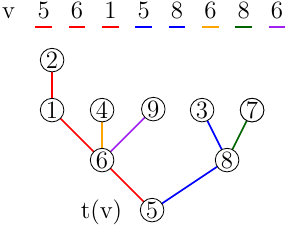}
\caption{\label{fig:bij}A sequence $\sequence{v}=(v_1,\ldots,v_8)$ and the tree $\tree(\sequence{v})$ that it encodes. For $i=1,\dots, 5$, the colour of the edges on the path $P_i$ in $\tree(\sequence{v})$ equals the colour underlining 
entries $v_{j(i-1)},\ldots,v_{j(i)-1}$ of $\sequence{v}$. 
Figure adapted from \cite{cayley_us}.}\end{figure}

The inverse of the bijection works as follows. Fix a tree $\tree \in \cT_{\dseq}$. Let $S_0 = \{r(\tree)\}$ consist of the root of $\tree$. Recursively, for $1 \le i \le n_0$ let $P_i$ be the path from $S_{i-1}$ to $m+i$ in $\tree$, and let $P_i^*$ be $P_i$ excluding its final point $m+i$. Include the labels of the vertices of $P_i$ in $S_{i-1}$ and call the resulting set $S_i$. Then let $\sequence{v}=\sequence{v}(\tree)$ be the concatenation of $P_1^*,\ldots,P_{n_0}^*$.

{\subsection{Critical trees are not too short}
It is straightforward from the above bijection that the height of the tree is bounded from below by the length of the longest distance between two repeats in the sequence. To prove Theorem \ref{thm:height_critical0}, we will use Lemma~\ref{lem:nomasshighdegrees} to show that if we let $\mathrm{D}$ be the degree sequence of a critical Bienaym\'e tree, then the length of the longest distance between two repeats in a uniform sample from $\cS_\mathrm{D}$ is $\omega(\ln n )$ in probability. }
\begin{proof}[Proof of Theorem~\ref{thm:height_critical0}]
By Remark \ref{rk:degrees}, we can generate $\rtree_n$ in the following way. Let $D_1,\dots,D_n$ be i.i.d.\ samples from $\mu$ conditioned to sum to $n-1$. Then, let $\rseq_n=(\rseq_n(1),\ldots,\rseq_n(n-1))$ be a uniformly random sample from the set of sequences that satisfy that, for each $i\in [n]$, $i$ occurs exactly $D_i$ times. Let $\rtree_n=b(\rseq_n)$ be the tree encoded by applying the Foata--Fuchs bijection to $\rseq_N$, so that, by Proposition~\ref{prop:planetrees}, $\H(\rtree_n)$ is distributed as the height of a $\mu$-Bienaym\'e tree with size $n$. 

Fix $M>0$ large and $\varepsilon>0$ small. We are done if we show that for $n$ large enough, with probability at least $1-\varepsilon$, there exists $k \in [n-1]$ such that none of $\rseq_n(k),\rseq_n(k+1),\dots, \rseq_n(k+\lceil M\ln n \rceil )$ is a repeat. Indeed, for such $k$,  the vertices $\rseq_n(k),\rseq_n(k+1),\dots, \rseq_n(k+\lceil M\ln n \rceil )$ form a path with $\lceil M\ln n \rceil$ edges away from the root, so the height of the tree is at least $M\ln n $. 

Fix $\beta>0$ small. By Lemma~\ref{lem:nomasshighdegrees} there exists $K=K(\beta)>0$ such that for $n$ large enough,
\[\P\left(\sum_{i=1}^n D_i\I{D_i> K}\leq \beta n\right)>1-\varepsilon/2.\]

We condition on the event that $D_1,\dots,D_n$  satisfy this property. 
We call $\ell\in [n-1]$ \emph{good} if it satisfies the following two properties:
\begin{enumerate}
\item It corresponds to a vertex with degree at most $K$, or in other words, $D_{\rseq_n(\ell)}\leq K$, and 
\item It is not a repeat, or in other words, $\rseq_n(\ell)\not \in \{\rseq_n(1),\dots, \rseq_n(\ell-1)\}$.
\end{enumerate}

It is sufficient to show that with conditional probability at least $1-\varepsilon/2$ there exists a consecutive sequence of  good indices of length at least $M\ln n$. We will now show that for $\beta$ sufficiently small and $\delta=\delta(\beta)>0$ sufficiently small, it is in fact likely that there exists such a sequence amongst the first $\delta n$ elements of $\rseq_n$. 

{From now on we work conditionally given  $D_1,\dots, D_n$ and assume that $\sum_{i=1}^n D_i\I{D_i> K}\leq \beta n$.} Observe that {the law of} $(\rseq_n(1),\dots, \rseq_n(n-1))$ is {the law of a} sampling without replacement from the size-$(n-1)$ multiset in which each integer $i\in [n]$ occurs exactly $D_i$ times. Note that, by our assumption, at most $\beta n$ elements in this multiset correspond to a vertex of degree larger than $K$. Now, for a given $0<\delta<1$, for any $1\leq \ell \leq \delta n$, conditional on $\rseq_n(1),\dots, \rseq_n(\ell-1)$,  the probability that $\rseq_n(\ell)$ has degree larger than $K$ is bounded from above by  $\beta/(1-\delta)$. Furthermore, the multiset contains at most $\delta n K$ repeats of vertices in $\rseq_n(1),\dots, \rseq_n(\ell-1)$ with degree at most $K$, so the probability that $\rseq_n(\ell)$ is a repeat of a vertex with degree at most $K$ is bounded from above by $\delta K/(1-\delta)$. This implies that for all $1 \le \ell \le \delta n$, conditional on $\rseq_n(1),\dots, \rseq_n(\ell-1)$, the probability that $\ell$ is good is bounded from below by 
\[1-\frac{\beta}{1-\delta} -\frac{\delta K}{1-\delta} =:1-p ,\]
and note that we can make $p$ arbitrarily small by first choosing $\beta$ small enough (which determines the value of $K=K(\beta)$), and then choosing  $\delta$ small enough.

Therefore, still under the assumption that $\sum_{i=1}^n D_i\I{D_i> K}\leq \beta n$, it is possible to couple the random variables ${(\rseq_n(1),\dots, \rseq_n(n-1))}$ with a sequence of independent tosses of a biased coin that comes up tail with probability $p$ and head with probability $1-p$, so that for $\ell\leq \delta n$, {if the $\ell$th coin flip gives a head then $\ell$ is good.} But in this sequence, the longest consecutive sequence of heads amongst the first $\delta n$ coin flips divided by $\ln n$ converges to $-1/\ln(1-p)$ almost surely, which we can make arbitrarily big by choosing $p$ small, and in particular, choosing $p$ small enough yields that with probability at least $1-\varepsilon/2$, the longest sequence of good indices is at least $M\ln n$ for $n$ large enough, which implies the statement.
\end{proof}

\section{\bf Some critical trees are quite short and quite fat}\label{sec:quite_short_quite_fat}
In this section, we prove Theorem~\ref{thm:critical_short_trees0}, which implies that the bounds in Theorem \ref{thm:height_critical0} are optimal.

\subsection{A high degree vertex makes a tree short and fat}We will first show that the following proposition implies Theorem~\ref{thm:critical_short_trees0}, so that we only need to study the largest degree in $\rtree_n$. 

For a tree $\tree$, recall that $\Delta(\tree)$ denotes the maximal offspring of a vertex in $\tree$.

\begin{prop}\label{prop:largedegree}
Let $f(n)\to \infty$. Then, there exists a critical offspring distribution $\mu$ so that 
\[
\limsup_{n\to\infty}\P\left(\Delta(\rtree_n(\mu))>\frac{n}{f(n)}\right)=1, 
\] 
where the limit runs over all $n$ for which $\rtree_n$ is well-defined. 
\end{prop}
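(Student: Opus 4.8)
The plan is to construct $\mu$ explicitly so that the event $\{|\rtree|=n\}$ forces, with non-vanishing probability along a suitable subsequence $n_k\to\infty$, the presence of a vertex of degree roughly $n_k/f(n_k)$. The natural candidate is a critical offspring distribution with a heavy polynomial tail $\mu_j\asymp c(j)/j^2$ (so that $\sum_j j\mu_j=1$), where $c$ is a slowly varying function tending to $0$ slowly enough to keep $\mu$ critical but fast enough that the ``local Cauchy'' condensation heuristic kicks in: a single macroscopic jump of the associated {\L}ukasiewicz walk, of size comparable to $b_n=n\E{X\I{|X|>a_n}}$, appears. Heuristically, by Remark~\ref{rk:degrees} the degree multiset of $\rtree_n$ is that of $\{X_i+1:1\le i\le n\}$ conditioned on $S_n=-1$; for such tails, conditioning on $S_n=-1$ is (up to $e^{o(n)}$ cost, via Lemma~\ref{lem:probtree}) compatible with the unconditioned event that $\max_{i\le n}(X_i+1)\ge n/f(n)$, which already has probability $\approx n\cdot\mu[n/f(n),\infty)\asymp f(n)\,c(n/f(n))/n\cdot n = f(n)c(n/f(n))$. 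So one wants to choose $c$ so that $f(n)c(n/f(n))$ does not go to $0$ along a subsequence --- this is where the freedom in choosing $\mu$ depending on $f$ is used.

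Concretely, I would proceed as follows. First, reduce to the unconditioned walk: using the ballot-theorem identity $\p(|\rtree|=n)=\tfrac1n\p(S_n=-1)$ and Lemma~\ref{lem:probtree}, for any event $\cA$ we have $\p(\rtree_n\in\cA)\ge \p(\rtree\in\cA,\ |\rtree|=n)/\p(|\rtree|=n)$; but it is cleaner to lower-bound $\p(\Delta(\rtree_n)\ge n/f(n))$ directly by $\Cprob{\max_{i\le n}(X_i+1)\ge n/f(n)}{S_n=-1}$ (by Remark~\ref{rk:degrees}, these are equal), and then to bound the conditional probability below by a positive constant using that the "big jump" and the event $\{S_n=-1\}$ are essentially independent: condition on which index $i^\star$ carries the big jump, replace $X_{i^\star}$ by a value that makes the remaining $n-1$ increments need to sum to something of order $-n/f(n)$, and observe that a walk of $n-1$ centred increments lands at a prescribed point of order $o(n)$ with probability $e^{-o(n)}$, again by (the proof technique of) Lemma~\ref{lem:probtree} adapted to a shifted target. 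The surviving factor is then $\asymp n\mu[n/f(n),\infty)$ up to $e^{o(n)}$, so one needs a quantitative choice of $\mu$.

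Second, the construction of $\mu$. Given $f\to\infty$, choose an increasing sequence $n_k\to\infty$ and values $\delta_k\downarrow 0$, and build a critical $\mu$ whose tail satisfies $\mu[n_k/f(n_k),\infty)\ge \delta_k f(n_k)/n_k$ with $\delta_k$ decaying slowly — e.g.\ by placing carefully tuned atoms, or more robustly by taking $\mu_j = L(j)/j^2$ for a bespoke slowly varying $L$ designed so that $L$ is "large at scale $n_k/f(n_k)$" for each $k$ while $\sum_j L(j)/j$ still converges to keep criticality (after normalisation and a point mass at $0$). The precise bookkeeping — ensuring simultaneously (i) $\sum_j j\mu_j=1$, (ii) the tail is regularly enough behaved that $S_n=-1$ has probability $e^{-o(n)}$, and (iii) the subsequence lower bound $\liminf_k \p(\Delta(\rtree_{n_k})\ge n_k/f(n_k))>0$, in fact $=1$ — is the main obstacle; it requires choosing the $n_k$ sparse enough that the slowly varying bumps don't interfere, and choosing $\delta_k$ so that the $e^{o(n)}$ losses from Lemma~\ref{lem:probtree} are absorbed. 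One should also double check the degenerate case $\mu_0+\mu_1<1$ is arranged.

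Finally, deducing Theorem~\ref{thm:critical_short_trees0} from Proposition~\ref{prop:largedegree}: on the event $\{\Delta(\rtree_n)>n/f(n)\}$ the width bound $\mathsf{Width}(\rtree_n)\ge\Delta(\rtree_n)>n/f(n)$ is immediate from $\Delta\le\mathsf{Width}$. For the height, a vertex of degree $>n/f(n)$ leaves at most $f(n)$ worth of "budget" — more precisely, one invokes the stochastic domination result for heights of labelled trees with given degree sequence from \cite{AD22}: removing one vertex of huge degree and its pendant subtrees, the tree looks like a small number of conditioned trees of total size $O(n)$ but each built on a degree sequence with a dominant atom, whose height is controlled; combined with $f(n)\to\infty$ this yields $\mathsf{Height}(\rtree_n)<f(n)\ln n$ on an event of probability tending to $1$ along the subsequence. (One may need to pass to a further subsequence, which is harmless for a $\limsup$ statement, and possibly replace $f$ by $\sqrt{f}$ in the construction so that both the height and width conclusions hold simultaneously with the same $\mu$.) The delicate point throughout is that all the reductions lose an $e^{o(n)}$ factor, so the construction must build in enough slack; I expect that to be where most of the real work lies.
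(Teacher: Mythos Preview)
Your overall architecture --- construct $\mu$ depending on $f$ by placing mass along a sparse sequence $(n_k)$, then work along the subsequence $(n_k^*)$ --- matches the paper, and your final paragraph on deducing Theorem~\ref{thm:critical_short_trees0} from the proposition (via the stochastic domination from \cite{AD22}, the $\sqrt{f}$ trick, and $\mathsf{Width}\ge\Delta$) is essentially what the paper does.

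However, the core mechanism you propose is different from the paper's, and as written it has a real gap. You aim to lower-bound $\Cprob{\Delta_n>N}{S_n=-1}$ by a big-jump argument, and you plan to control the ratio $\P(S_{n-1}=-N-1)/\P(S_n=-1)$ using only that each term is $e^{-o(n)}$ via Lemma~\ref{lem:probtree}. But that lemma gives a \emph{lower} bound on $\p(S_n=-1)$; it provides no useful \emph{upper} bound, so the ratio you need is not controlled. Even if you patch this with a genuine local estimate, a first-moment lower bound of the form ``$\asymp n\mu[N,\infty)$'' can only show the conditional probability is bounded away from $0$; it cannot, by itself, show it tends to $1$, which is what the proposition requires.

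The paper avoids both problems by a different device. Rather than lower-bounding $\p(\Delta_n>N)$ directly, it bounds the \emph{ratio} $\p(\Delta_n\le N)/\p(\Delta_n>N)$ (this is Lemma~\ref{lem:large_degree_is_large}), which makes $\p(S_n=-1)$ cancel and leaves only quantities involving the \emph{truncated} walk $S^{<N}$. The crucial point --- which your plan does not identify --- is that truncating at $N$ makes the walk strictly \emph{subcritical} with bounded increments, so a uniform local large deviation estimate (Lemma~\ref{lem:localLDP}) shows that hitting $-1$ is exponentially less likely than hitting $-N-1\approx -\delta n$ for the truncated walk. This exponential gain is what drives $\p(\Delta_{n_k^*}\le n_k)\to 0$; the atoms $p_k$ are then chosen just large enough to absorb the remaining polynomial factor. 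Your Cauchy-tail heuristic does not create this drift (the unconditioned walk stays centred), so you would instead need sharp local limit theorems for heavy-tailed bridges --- a substantially harder route that your sketch does not supply.
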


We will shortly prove Proposition~\ref{prop:largedegree} but we will first show how to use it to prove Theorem~\ref{thm:critical_short_trees0}. We use a stochastic domination result from \cite{AD22} for the heights of random trees under a partial order on degree sequences. This allows us to switch from a realization of the degree sequence of $\rtree_n$, say $\mathrm{D}$, to a degree sequence that is easier to work with, say $\mathrm{D}'$, that satisfies that the height of a uniform element of $\cT_{ \mathrm{D}}$ is stochastically dominated by the height of a uniform element of $\cT_{ \mathrm{D}'}$. 

For a finite set $\cA$, let $A\in_u\cA$ denote that $A$ is a uniform sample from $\cA$. 

 We say two degree sequences $\dseq=(d_1,\dots,d_n)$ and $\dseq'=(d'_1,\dots,d'_n)$ are {\em equivalent} if there exists a permutation $\eta:[n]\to [n]$ such that $(d_1,\dots,d_n)=(d'_{\eta(1)},\dots,d'_{\eta(n)})$. If this holds then ``relabelling vertices according to $\eta$'' induces a bijection from $\cT_{\dseq}$ to $\cT_{\dseq'}$. This bijection preserves the height of a tree, so for $\rtree_{\dseq}\in_u\cT_{\dseq}$ and $\rtree_{\dseq'}\in_u\cT_{\dseq'}$ it holds that $\height(\rtree_{\dseq})\overset{d}{=}\height(\rtree_{\dseq'})$. We define a partial order by  specifying a covering relation on equivalence classes of such degree sequences\footnote{
	For a partially ordered set $(\mathcal{P},\prec)$, $y \in\mathcal{P}$ covers $x \in \mathcal{P}$ if $x \prec y$ and for all $z \in \mathcal{P}$, if $x \preceq z \preceq y$ then $z \in \{x,y\}$. 
}. 

Let $\prec$ be the partial order on degree sequences of length $n$ defined by the following covering relation on equivalence classes:  for $\cD$ and $\cD'$ equivalence classes of degree sequences of length $n$ , we say $\cD$ covers $\cD'$ if there exist $\dseq=(d_1,\dots,d_n)\in \cD$ and $\sequence{d'}=(d'_1,\dots,d'_n)\in \cD'$ such that 
	\begin{enumerate}
		\item $d_1\geq d_2$;
		\item $d'_1=d_1+1$;
		\item $d'_2=d_2-1$; and
		\item $d'_k=d_k$ for $k=3,\dots,n$. 
	\end{enumerate}

In words, to obtain $\sequence{d'}$ from $\dseq$ in the definition of $\prec$, for some $a\leq b$, we replace one vertex with $a$ children and one vertex with $b$ children by a vertex with $a-1$ children and one with $b+1$ children; informally, the degrees in $\sequence{d'} $ are more skewed than the degrees in $\dseq$. 
We then have the following theorem, which states that more skewed degree sequences yield shorter trees.

\begin{thm}[Theorem 9 in \cite{AD22}]\label{thm:stochastic_order}
	Let $\dseq$ and $\dseq'$ degree sequences of length $n$ and let $\rtree_{\dseq} \in_u \cT_{\dseq}$ and $\rtree_{\dseq'}\in_u \cT_{\dseq'}$. Then, 
	\[
	 \dseq \prec \sequence{d'} \implies \height(\rtree_{\dseq})\preceq_{\mathrm{st}} \height(\rtree_{\sequence{d'}}).
	\]
\end{thm}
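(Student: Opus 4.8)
Since stochastic domination is transitive and $\prec$ is the transitive closure of the covering relation defined above, the plan is to reduce to the case where $\dseq'$ covers $\dseq$, so that (after a relabelling, which changes neither side's height law) there are $i\neq j$ with $d_i-d_j\ge 2$, $d'_i=d_i-1$, $d'_j=d_j+1$ and $d'_k=d_k$ for $k\notin\{i,j\}$. By Strassen's theorem it then suffices to build a coupling of $\rtree_{\dseq}$ and $\rtree_{\dseq'}$ under which $\height(\rtree_{\dseq})\le\height(\rtree_{\dseq'})$ almost surely (or, avoiding Strassen, to bound $\p(\height(\rtree_{\dseq})\ge h)$ by $\p(\height(\rtree_{\dseq'})\ge h)$ directly).

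The natural vehicle is the Foata--Fuchs bijection of the previous subsection. After passing to compressed degree sequences (again harmless for heights), $\rtree_{\dseq}$ is $\tree(\sequence{v})$ for a uniform $\sequence{v}\in\cS_{\dseq}$, and similarly $\rtree_{\dseq'}=\tree(\sequence{v}')$ for a uniform $\sequence{v}'\in\cS_{\dseq'}$. The obvious candidate coupling is to produce $\sequence{v}'$ from $\sequence{v}$ by replacing one uniformly random occurrence of the symbol $i$ by the symbol $j$; a short count with multinomial coefficients (each $\sequence{v}'\in\cS_{\dseq'}$ has $d_j+1$ preimages, each hit with probability $1/d_i$, while $|\cS_{\dseq'}|/|\cS_{\dseq}|=d_i/(d_j+1)$) shows this map pushes the uniform law on $\cS_{\dseq}$ forward to the uniform law on $\cS_{\dseq'}$. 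It is convenient to pair this with an explicit formula for the height in terms of the sequence: reading $\sequence{v}$ from left to right, assign depth $0$ to the first symbol and, each time a symbol appears for the first time, assign it depth one more than the depth already attached to the symbol immediately to its left; then $\height(\tree(\sequence{v}))$ equals one plus the largest depth assigned. (This is read off the line-breaking picture: the cuts occur exactly at the repeated symbols, each resulting ``piece'' becomes a pendant path, and the first symbol of a piece is the already-present vertex onto which that path is grafted, at its current depth.)

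The difficulty, and where I expect essentially all of the work to lie, is that the substitution coupling above is \emph{not} height-monotone: one can exhibit (small) sequences for which changing a single occurrence of $i$ to $j$ strictly decreases the height of the encoded tree, so following this coupling verbatim does not establish the domination. The remedy is to use the freedom in \emph{which} occurrence of $i$ to replace --- equivalently, which pendant piece of the line-breaking construction to detach and where to reattach it --- in a way that still pushes the uniform measure forward correctly but, using the depth formula above, never lowers the maximal depth; alternatively one bypasses the explicit coupling and produces a suitable weighted injection between the corresponding sets of sequences. In either case the combinatorial core is a careful case analysis of how the first-occurrence/repeat structure of a sequence, and hence the grafting heights in the line-breaking construction, transform under a single symbol substitution, verifying that the height cannot drop; these details are somewhat delicate and are carried out in \cite{AD22}.
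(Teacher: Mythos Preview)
The paper does not prove this statement: Theorem~\ref{thm:stochastic_order} is imported verbatim from \cite{AD22} and used as a black box in the proof of Theorem~\ref{thm:critical_short_trees0}. There is therefore nothing in the present paper to compare your argument against.

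That said, your sketch is a reasonable outline of the strategy one expects (and which \cite{AD22} indeed follows): reduce to a single covering step, encode both trees via the Foata--Fuchs bijection, and try to couple the two uniform sequences by a single symbol substitution. You are also right to flag the key obstruction --- the naive ``replace a uniform occurrence of $i$ by $j$'' coupling is not height-monotone --- and to locate the real work in choosing the substitution (or reattachment) carefully so that the maximal depth cannot decrease. Since you ultimately defer the case analysis to \cite{AD22}, your proposal is best read as an informed summary rather than an independent proof; as such it is accurate, but it does not add anything beyond what the paper already does by citing the result.
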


\begin{proof}[Proof of Theorem~\ref{thm:critical_short_trees0} {using Proposition \ref{prop:largedegree}}]
Suppose, without loss of generality, that $f(n)=o(n)$. 

Fix $n$ and fix $d^n_1,\dots,d^n_n$ in the support of $\mu$ with $\sum_{i=1}^nd^n_i=n-1$ so that $\dseq_n=(d^n_{1},\dots,d^n_{n})$ is the degree sequence of a tree. Then, for $\rtree_{\dseq_n}$ a uniform tree with degree sequence $\dseq_n$, we see that on the event that $\rtree_n$ has degree sequence $\dseq_n$, it holds that $\height(\rtree_n)\eqdist \height(\rtree_{\dseq_n})$. 

Set $\Delta_n=\max\{d^n_1,\dots,d^n_n\}$ and let $\dseq'_n=(d'^n_1,\dots,d'^n_n)$ be the degree sequence with $d'^n_1=\Delta$, $d'^n_2=\dots,d'^n_{n-\Delta_n}=1$ and $d'^n_{n-\Delta_n+1}=\dots=d'^n_n=0$. Then, we claim that $\dseq_n\preceq \dseq'_n$ in the partial order considered in Theorem \ref{thm:stochastic_order}.  To see this, suppose without loss of generality that $d^n_1=\Delta_n$. Then, one can get from $\dseq_n$ to $\dseq'_n$ (up to relabelling the vertices) by iteratively choosing a vertex with label different from $1$ with degree at least $2$, reducing its degree by $1$ and increasing the degree of a vertex with degree $0$ by $1$. At each of these steps, we move to a degree sequence that is higher in the partial ordering and the procedure terminates when all vertices but vertex $1$ have degree $1$ or $0$.  Therefore, $\dseq_n\preceq \dseq'_n$ and $\height(\rtree_{\dseq_n})\preceq_{\mathrm{st}} \height(\rtree_{\dseq'_n})$ by Theorem \ref{thm:stochastic_order}.

Any tree with degree sequence $\dseq'_n$ consists of $\Delta_n+1$ paths leaving vertex $1$, one from the root to vertex $1$ and $\Delta_n$ from vertex $1$ to the leaves. The height of the tree is the length of the path to the root plus the length of the longest path to a leaf. Since $\rtree_{\sequence{d'}_n}$ is a uniformly random tree with degree sequence $ \dseq'_n$, the lengths of these paths are distributed as a uniformly random composition of $n$ into $\Delta_n+1$ positive parts. By a stars-and-bars argument we see that the probability that the first part in such a composition has length exceeding $2n\ln(n)/\Delta_n$ is bounded from above by \[(1-\tfrac{\Delta_n}{n})^{2n\ln(n)/\Delta_n}\leq n^{-2}\] so the expected number of parts that have length greater than than $2n\ln(n)/\Delta_n$ goes to $0$ and therefore the longest part is $O_p(n\ln(n)/\Delta_n)$. It follows that if there exists a subsequence $(n_k)_{k\geq 1}$ such that {$\Delta_{n_{k}}=\omega_p(n_k/f(n_k))$}, 
then {$\height(\rtree_{\dseq'_{n_k}})=o_p(f(n_k)\ln(n_k))$}. In this case, by the stochastic domination of $\height(\rtree_{\dseq_n})$ by $\height(\rtree_{\dseq'_n})$ for each $n$, we also get that {$\height(\rtree_{\dseq_{n_k}})=o_p(f(n_k)\ln(n_k))$}.
Finally, by Proposition \ref{prop:largedegree} applied to $\sqrt{f(n)}$ there exists an offspring distribution $\mu$ and a subsequence $(n_k)_{k\geq 1}$ so that 
{$\Delta_{n_{k}}=\Omega_p(n_k/\sqrt{f(n_k)})$ and therefore $\Delta_{n_{k}}=\omega_p(n_k/f(n_k))$}, which establishes the upper bound on the height. 

Finally, since $\mathsf{Width}(\rtree_{\dseq'_{n_k}}) \ge \Delta_{n_k}$, this also establishes the lower bound on the width, and completes the proof.
\end{proof}

\subsection{{Some critical trees have a high degree vertex}}

We will now prove Proposition \ref{prop:largedegree}. We first need some additional notation and two lemmas. For an offspring distribution $\mu$ and a set $A\subset \Z_+$ with $\mu(A)>0$, we let the offspring distribution \emph{$\mu$ restricted to $A$}, denoted by $\mu^A$,  be the distribution defined by \[ \mu^A _{k}=\begin{cases}\mu_k/\mu(A)&\text{if }k\in A \\ 0&
\text{otherwise.}\end{cases}\]
 (Equivalently, for $Y$ with law $\mu$, we let $\mu^A$ be the law of $Y$ conditional on the event that $Y\in A$.) 
 
\begin{lem}\label{lem:large_degree_is_large}
Let $\mu$ be an offspring distribution and let $\Delta_n=\Delta(\rtree_n)$ be the maximal degree in a $\mu$-Bienaym\'e tree with size $n$. For $N\geq 0$, let $X^{<N}_1,X^{<N}_2,\dots,$ be i.i.d.\ samples  with law $\P(X^{<N}=i)=\mu^{\{0,1,\dots,N\}}_{i+1}$ for $i \in \{-1,\dots,N-1\}$ (i.e.\ they are distributed as $X$ conditional on the event that $X<N$). Then, for integers $n>N+1$, it holds that 
\[\P(\Delta_n\leq  N) \leq \frac{ \P(X^{<N}_1+\dots+X^{<N}_n=-1)}{n \P(X_1=N)\P( X^{<N}_2+\dots+X^{<N}_n=-N-1)}.\]
\end{lem}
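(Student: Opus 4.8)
The plan is to use Remark~\ref{rk:degrees} to turn this into a statement about i.i.d.\ sums and then to bound the resulting conditional probability by keeping a single, easy-to-analyse contribution to $\P(S_n=-1)$ in the denominator.

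First, if $\P(X_1=N)=\mu_{N+1}=0$ then the right-hand side is infinite and there is nothing to prove, so I would assume $\mu_{N+1}>0$. Since the event $\{\Delta_n\le N\}$ is exactly the event that every degree in $\rtree_n$ is at most $N$, i.e.\ that $X_i<N$ for all $i\in[n]$ (with, as always, $\P(X_i=k)=\mu_{k+1}$ and $S_n=X_1+\cdots+X_n$), Remark~\ref{rk:degrees} gives
\[
\P(\Delta_n\le N)=\P\bigl(X_i<N\ \text{for all}\ i\in[n]\ \big|\ S_n=-1\bigr)=\frac{\P(X_1<N,\ldots,X_n<N,\,S_n=-1)}{\P(S_n=-1)}.
\]
The numerator factorises: conditioning each $X_i$ on $\{X_i<N\}$ (so that the conditioned increments are i.i.d.\ copies of $X^{<N}$) yields
\[
\P(X_1<N,\ldots,X_n<N,\,S_n=-1)=\P(X_1<N)^{n}\,\P\bigl(X_1^{<N}+\cdots+X_n^{<N}=-1\bigr).
\]

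Next I would lower bound $\P(S_n=-1)$. The idea is to retain only those walks in which exactly one increment equals $N$ and all the others are $<N$; such a walk ends at $-1$ at time $n$ precisely when the sum of the remaining $n-1$ increments equals $-N-1$. For each fixed $i\in[n]$ the event $\{X_i=N\}\cap\{X_j<N\ \forall j\ne i\}\cap\{\sum_{j\ne i}X_j=-N-1\}$ is contained in $\{S_n=-1\}$, and these $n$ events are pairwise disjoint, since each one specifies the unique index carrying the increment $N$. Summing their probabilities (rather than taking a mere union bound) and using independence together with the same factorisation trick — and the hypothesis $n>N+1$, which is what makes the relevant $(n-1)$-fold event non-degenerate; if it has probability zero the asserted bound is again trivial — gives
\[
\P(S_n=-1)\ \ge\ n\,\P(X_1=N)\,\P(X_1<N)^{n-1}\,\P\bigl(X_2^{<N}+\cdots+X_n^{<N}=-N-1\bigr).
\]
Dividing the numerator above by this lower bound, and noting that $\P(X_1<N)^{n}/\P(X_1<N)^{n-1}=\P(X_1<N)\le 1$, produces exactly the claimed inequality.

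There is no genuine obstacle here; the only points requiring care are the translation of $\{\Delta_n\le N\}$ into a condition on the increments via Remark~\ref{rk:degrees}, and the observation that the $n$ events used to bound $\P(S_n=-1)$ from below are truly disjoint, which is precisely what produces the factor $n$ in the denominator.
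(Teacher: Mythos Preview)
Your proof is correct and is essentially the same argument as the paper's. The only cosmetic difference is that the paper bounds $\P(\Delta_n>N)$ from below (using exchangeability) and then uses $\P(\Delta_n\le N)\le \P(\Delta_n\le N)/\P(\Delta_n>N)$, whereas you lower bound $\P(S_n=-1)$ directly by the same disjoint union of events; after cancellation these are the same computation.
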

\begin{proof}
First, observe that for $X_1,\dots,X_n$ i.i.d.\ samples distributed as $X$ (i.e. $\P(X_1=i)=\mu_{i+1}$), the degree sequence of a $\mu$-Bienaym\'e tree has the same law as $(X_1+1,\dots,X_n+1)$ conditional on the event that $X_1+\dots+X_n=-1$.   Then since, on the event $X_1+\dots+X_n=-1$, it holds that  $X_1,\dots, X_n$ are exchangeable, we get for any $N>0$ that  
\begin{align*} 
\P(\Delta_n> N)&\geq n \P(X_1=N, X_2,\dots, X_n<N \mid X_1+\dots+X_n=-1)\\
&= \frac{n \P(X_1=N)\P(X_2,\dots,X_n<N)\P( X^{<N}_2+\dots+X^{<N}_n=-N-1)}{\P(X_1+\dots+X_n=-1)}
\end{align*}
and 
\begin{align*} \P(\Delta_n\leq N)&= \P(X_1,\dots,X_n< N \mid X_1+\dots+X_n=-1)\\
&= \frac{\P(X_1,\dots,X_n<N)\P(X^{<N}_1+\dots+X^{<N}_n=-1)}{\P(X_1+\dots+X_n=-1)},
\end{align*}
so that 
\begin{align*}
\P(\Delta_n\leq N)&\leq \frac{\P(\Delta_n\leq  N)}{\P(\Delta_n> N) } \\
&\leq \frac{ \P(X^{<N}_1+\dots+X^{<N}_n=-1)}{n \P(X_1=N)\P( X^{<N}_2+\dots+X^{<N}_n=-N-1)}.
\end{align*}
as claimed. 
\end{proof}
Finally, for the proof of Proposition \ref{prop:largedegree}, we proceed as follows. Fix $f(n)\to \infty$.  We will explicitly construct a critical offspring distribution $\mu$ for which
\begin{equation}
\label{eq:Delta}\limsup_{n\to\infty}\P\left(\Delta_{n}>\frac{n}{f(n)}\right)=1,
\end{equation}
where we recall the notation $\Delta_n=\Delta(\rtree_n)$.

More precisely, we will construct an increasing integer sequence $(n_i)_{i \ge 0}$ with $n_0=0$, $n_1=1$, $n_2=2$, and an offspring distribution $\mu$ with mean $1$ and support $\{n_i,i \ge 0\}$. The construction will guarantee that there is another integer sequence $(n^*_i)_{i\geq 2}$ with $n_i\geq n^*_i/f(n^*_i)$ for which it holds that
$
\P(\Delta_{n^*_i}< n_i)\to 0 
$
as $i\to \infty$. 
For our construction, we first inductively build a sub-probability measure $\nu$ with support $\{n_i,i \ge 0\}$ such that, writing $p_i=\nu_{n_i}$, then $\sum_{i \ge 0} n_ip_i=1$. We then modify $\nu$ to obtain the critical probability measure $\mu$ by placing all remaining mass on $1$.

We will use Lemma~\ref{lem:large_degree_is_large} to establish \eqref{eq:Delta} for our construction. The lemma requires us to control a ratio of hitting probabilities, and for this we require the following {result}. This {result} may seem more complicated than one might expect; the reason is that the final mass at $1$ is not determined until the very end of the construction of $\mu$,  so {we need uniform bounds since }we do not yet know the precise laws of the summands appearing in the ratio in Lemma~\ref{lem:large_degree_is_large}. 
\begin{lem}\label{lem:localLDP}
Fix $K\in \N$ and $\nu_{0},\nu_1,\dots, \nu_K\ge 0$ for which $\nu_{0},\nu_1, \max\{\nu_i:i>1\}>0$ and $\nu_{0}+\nu_1+\dots+\nu_K=1-\varepsilon<1$. Suppose $\sum_{i=1}^K i\nu_i+\varepsilon=1-\delta<1$. For $c\in [0,\varepsilon]$ let $X^{(c)}$ be the random variable with support in $\{-1,0,\dots, K-1\}$ obtained by setting 
\[
\P(X^{(c)}=k)=\begin{cases} \frac{\nu_{k+1}}{ 1-\varepsilon +c }& \text{if }k\neq 0 \\ \frac{\nu_1+c}{1-\varepsilon+c}& \textrm{ if }k=0. \end{cases}
\]
Let $X^{(c)}_1,X^{(c)}_2,\dots$ be i.i.d.\ copies of $X^{(c)}$.
Then there exists $\lambda<1$ so that for all $m$ large enough, for all $c\in [0,\varepsilon]$, for all $0\leq s <\delta$ and for all $\max\{s,\delta/2\}<  r\le \delta $ such that $(\delta-r)m$ and $(\delta-s)m$ are integers,  
 \begin{align}\label{eq:exp_tails_multinom} 
 \frac{\P( X^{(c)}_1+\dots+X^{(c)}_{m}=-(\delta-r)m )}{
 \P(X^{(c)}_1+\dots+X^{(c)}_{m-1}=-(\delta-s)m)}&\leq \lambda^{(r-s)m}.
 \end{align}
\end{lem}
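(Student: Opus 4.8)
The plan is to bound each of the two probabilities in \eqref{eq:exp_tails_multinom} separately, uniformly in the parameter $c \in [0,\varepsilon]$, using exponential-tilting (large deviation) estimates. Write $m' = m-1$ in the denominator; the two sums differ by only one summand, which is harmless since $X^{(c)}$ is bounded (its support lies in $\{-1,0,\dots,K-1\}$), so $\P(X^{(c)}_1+\dots+X^{(c)}_{m-1}=-(\delta-s)m) \ge \kappa \cdot \P(X^{(c)}_1+\dots+X^{(c)}_{m}=-(\delta-s)m+j)$ for some fixed $\kappa>0$ and some $j$ in the bounded support, after conditioning on the last increment. So it suffices to compare $\P(S^{(c)}_m = -(\delta-r)m)$ and $\P(S^{(c)}_m = -(\delta-s)m)$ where $S^{(c)}_m = X^{(c)}_1+\dots+X^{(c)}_m$, and the key point is that $-(\delta-r)m > -(\delta-s)m$ (since $r > s$) while the mean $\E{X^{(c)}}$ is close to $-\delta$ uniformly: indeed $\E{X^{(c)}} = \frac{1}{1-\varepsilon+c}(\sum_{k} k\nu_{k+1} + 0\cdot c) = \frac{-\delta-\varepsilon+\varepsilon}{1-\varepsilon+c}$... more carefully, $(1-\varepsilon+c)\E{X^{(c)}} = \sum_{k\ge 1}k\nu_{k+1} - \nu_0 = (1-\delta-\varepsilon) - \nu_0$, which is $\le -\delta$ since $\nu_0 \ge \varepsilon$ would be needed; in any case $\E{X^{(c)}} \le -\delta/(1-\varepsilon+c) \le -\delta$ when $\nu_0 \ge \varepsilon$, and more generally one checks $\E{X^{(c)}}$ stays bounded away from $0$ and is monotone in $c$. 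The target values $-(\delta-r)m$ and $-(\delta-s)m$ both lie strictly above $m\E{X^{(c)}}$, i.e. in the upper large-deviation regime.

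The main step is then a uniform upper-tail Chernoff bound. For the numerator, by Markov's inequality applied after exponential tilting: for any $\theta \ge 0$,
\[
\P\big(S^{(c)}_m = -(\delta-r)m\big) \le \P\big(S^{(c)}_m \ge -(\delta-r)m\big) \le e^{\theta(\delta-r)m}\,\big(\E{e^{\theta X^{(c)}}}\big)^m = e^{m\psi_c(\theta)},
\]
where $\psi_c(\theta) = \theta(\delta-r) + \ln \E{e^{\theta X^{(c)}}}$. For the denominator I want a matching \emph{lower} bound; here I would use a local central limit theorem for the tilted walk, or alternatively the cruder bound $\P(S^{(c)}_m = -(\delta-s)m) \ge c_1 m^{-1/2} e^{m I_c(s)}$ where $I_c$ is the rate function at the point $-(\delta-s)$, obtained by tilting to make $-(\delta-s)m$ the mean and applying a uniform LCLT. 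The ratio is then at most (polynomial in $m$) times $\exp(m[\psi_c(\theta) - I_c(s)])$, and optimizing $\theta$ one gets $\exp(-m[I_c(r) - I_c(s)])$ up to polynomial factors. Since $I_c$ is convex, increasing, and — this is the crux — has derivative bounded below by some $\eta_0 > 0$ uniformly over $c \in [0,\varepsilon]$ and over the relevant range $r \in (\delta/2, \delta]$ (because the tilting parameter needed to reach $-(\delta-r)$ with $r \ge \delta/2$ stays bounded away from $0$, uniformly in $c$, as the laws $\{X^{(c)}\}$ form a compact family of nondegenerate bounded distributions), we obtain $I_c(r) - I_c(s) \ge \eta_0 (r-s)$. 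Absorbing the polynomial factor $m^{O(1)}$ into the exponential by taking $\lambda = e^{-\eta_0/2} < 1$ and $m$ large, \eqref{eq:exp_tails_multinom} follows.

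The hard part is making every estimate \emph{uniform in $c$}. The family $\{X^{(c)} : c \in [0,\varepsilon]\}$ is a one-parameter family of probability measures on the fixed finite set $\{-1,\dots,K-1\}$; one should check that the relevant quantities — the mean $\E{X^{(c)}}$, the variance (bounded away from $0$ because $\nu_0, \nu_1$ and some $\nu_i$ with $i>1$ are all positive, so $X^{(c)}$ is genuinely nondegenerate with mass on at least two points for every $c$), the moment generating function $\E{e^{\theta X^{(c)}}}$ and its derivatives — are continuous in $c$ on the compact interval $[0,\varepsilon]$, hence the needed bounds hold with constants independent of $c$. The uniform LCLT can be gotten, for instance, from a Fourier/characteristic-function argument where the characteristic function $\E{e^{it X^{(c)}}}$ is bounded away from $1$ on compact subsets of $(0,2\pi)$ uniformly in $c$ (again by compactness of the parameter interval and nondegeneracy), or alternatively one can sidestep a full LCLT by directly lower-bounding $\P(S^{(c)}_m = -(\delta-s)m)$ via a combinatorial/multinomial estimate: writing $S^{(c)}_m$ as a shifted sum of indicator counts, pick a specific profile of how many increments equal each value in the support that matches the target sum and has each count proportional to $m$, and bound the corresponding multinomial coefficient times probability from below using Stirling; this gives the $c_1 m^{-1/2}e^{mI_c(s)}$ form with $I_c$ identified as the Legendre transform, and the uniformity is then just continuity of the entropy/rate functional in the finitely many parameters $\nu_0,\dots,\nu_K,c$. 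Either route works; I would take the multinomial route to keep everything elementary and self-contained, matching the style of Lemma~\ref{lem:probtree}.
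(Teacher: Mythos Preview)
Your approach is essentially the paper's: exponential tilting to identify the ratio with $\exp\big(-m[\Lambda_c^*(-(\delta-r))-\Lambda_c^*(-(\delta-s))]\big)$ times a ratio of local probabilities for tilted walks, then a uniform local CLT to control the latter. The paper writes the tilting via generating functions $G_c$ and tilting points $b_{c,t}$ (so $\ln b_{c,t}$ is your tilting parameter $\theta^*$), and proves the uniform LCLT by the Fourier route you mention as one option. Your suggestion to use Chernoff only for the numerator and a Stirling/multinomial lower bound for the denominator is a legitimate simplification and would work too.

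There is one genuine gap. You assert $I_c(r)-I_c(s)\ge \eta_0(r-s)$ from ``the derivative is bounded below by $\eta_0$ on the range $r\in(\delta/2,\delta]$''. But the mean-value theorem gives $I_c(r)-I_c(s)=I_c'(\xi)(r-s)$ for some $\xi\in(s,r)$, and since $s$ is allowed to be arbitrarily close to $0$, this $\xi$ may lie in $[0,\delta/2]$. There $(\Lambda_c^*)'$, i.e.\ the tilting parameter, can be arbitrarily close to $0$ (indeed at $s=0$, $c=\varepsilon$ the target equals the mean and $\theta^*=0$). So as written the inequality fails. The paper fixes this with a half-interval trick: by convexity,
\[
\int_s^r (\Lambda_c^*)'(t)\,\mathrm{d}t \;\ge\; \int_{(r+s)/2}^r (\Lambda_c^*)'(t)\,\mathrm{d}t \;\ge\; \frac{r-s}{2}\,(\Lambda_c^*)'\!\left(\tfrac{r+s}{2}\right)\;\ge\; \frac{r-s}{2}\,(\Lambda_c^*)'(\delta/4),
\]
using $r>\delta/2$ so that $(r+s)/2>\delta/4$; then one checks $(\Lambda_c^*)'(\delta/4)=\ln b_{c,\delta/4}$ is bounded away from $0$ uniformly in $c$ (since $-(\delta-\delta/4)$ is bounded away from every mean $\E{X^{(c)}}\le -\delta$). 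With this adjustment your argument goes through.

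Two minor points. Your reduction from $m-1$ to $m$ summands via ``there exists $j$ in the support with $\P(S^{(c)}_{m-1}=\ell)\ge \kappa\,\P(S^{(c)}_m=\ell+j)$'' is fine with $\kappa=1/(K+1)$, but note $j$ depends on $c,s,m$; since $|j|\le K$ this only shifts the per-step target by $O(1/m)$, negligible for the rate-function comparison, so the reduction is valid. And your computation of $\E{X^{(c)}}$ wandered: in fact $(1-\varepsilon+c)\E{X^{(c)}}=-\delta$ exactly, so $\E{X^{(c)}}=-\delta/(1-\varepsilon+c)\le -\delta$ for all $c\in[0,\varepsilon]$, confirming both targets lie at or above the mean as you need.
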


The proof of Lemma~\ref{lem:localLDP} involves a uniform local large deviation principle and we postpone it to after the proof of Proposition \ref{prop:largedegree}. 

\begin{proof}[Proof of Proposition \ref{prop:largedegree}]
We fix $f(n)\to \infty$ and immediately proceed with the construction described above. We start by setting $n_{0}=0$, $n_1=1$, $n_2=2$, ${\nu}_{n_0}=:p_{0}=1/2$, ${\nu}_{n_1}=:p_1=1/8$ and ${\nu}_{n_2}=:p_2=1/8$.   Now fix $k\geq 2$ and suppose we have determined $n_0,\dots,n_k$ and $p_0,\dots, p_k$ such that
\[
\varepsilon_k\coloneqq1-\sum_{i=0}^{k}p_i >0\quad \text{ and } \quad 
\delta_k\coloneqq-\sum_{i=0}^{k}(n_i-1)p_i>0
\]
(observe that $\varepsilon_2=1/4>0$ and ${\delta_2}=3/8>0$). We will choose suitable $n_{k+1}>n_k$, $n_{k+1}^*>n_{k+1}$ (whose use will appear later), $p_{k+1} \in (0,\varepsilon_{k})$ and we will set ${\nu}_{n_{k+1}}=p_{k+1}$.  We will also set ${\nu}_\ell=0$ for any $n_k<\ell<n_{k+1}$.

Specifically, observe that we are in the conditions of application of Lemma   \ref{lem:localLDP} with $K=n_k$ and $\nu_i$ for $0 \leq i\leq K$ since $\nu_0+\dots+\nu_K=1-\varepsilon_{k}<1$. Observe that  $\sum_{i=1}^K i\nu_i+\varepsilon_{k}=1-\delta_{k}<1$. Thus there exist $\lambda_{k+1} \in (0,1)$ and $n_{k+1}>n_{k}$ such that the conclusion of Lemma \ref{lem:localLDP} holds with  $m=\lceil \tfrac{1}{\delta_{k}} n_{k+1} \rceil$ and $\delta=\delta_{k}$, and such that
 \begin{enumerate} 
 \item[\optionaldesc{$(i)$}{(1)}]  $n_{k+1}-1\geq \frac{\delta}{\varepsilon}$, 
  \item[\optionaldesc{$(ii)$}{(2)}]  $ \lambda_{k+1}^{n_{k+1}-1} <\frac{\delta}{2n_{k+1}}$, \text{ and}
 \item[\optionaldesc{$(iii)$}{(3)}]   $f(\lceil \tfrac{1}{\delta} n_{k+1}\rceil)>\frac{10}{\delta_{k}}$ (which is possible because $f(n)\to \infty$).
 \end{enumerate} 
We finally define $n_{k+1}^*=\lceil\tfrac{1}{\delta_{k}}n_{k+1}\rceil$ and $ \nu_{n_{k+1}}=p_{k+1} \coloneqq \frac{\delta_{k}}{2 (n_{k+1}-1)}$. 

Note that 
\[
\frac{n_{k+1}^*}{f(n_{k+1}^*)} \le \frac{\delta_k n_{k+1}^*}{10} \le n_{k+1},
\] 
as required. It follows that to prove the proposition, it suffices to show that 
\begin{equation}
\label{eq:delta0}\P(\Delta_{n^*_i}< n_i)\to 0
\end{equation}
as $i \to \infty$.

Observe also that $\varepsilon_{k+1}=\varepsilon-p_{k+1}>0$ because by \ref{(1)} we have  $ p_{k+1}\leq \varepsilon/2$. This also implies that
\begin{equation}
\label{eq:deltak}
\delta_{k+1}=\delta_{k}- (n_{k+1}-1) p_{k+1}=\frac{\delta_{k}}{2}>0.
\end{equation}
 
 We now define $\mu$. Observe that 
\begin{equation}
\label{eq:sum}
0 \leq \sum_{i\geq 0} p_i\le 1 \qquad \textrm{and} \qquad \sum_{i\geq 0}(n_i-1) p_i =0
\end{equation}
because $\varepsilon_{k}>0$ for every $k \geq 1$ and  since $\delta_{k} \rightarrow 0$ by \eqref{eq:deltak}.  We can therefore define the offspring distribution $\mu$  by setting $\mu_{k}=\nu_{k}$ for $k \neq 1$ and $\mu_1=p_1+1-\sum_{i\geq 0}p_i$. Then
 \[
 \sum_{k=1}^{\infty} k \mu_{k}=p_1+1-\sum_{i\geq 0}p_i+\sum_{k=2}^{\infty} k \nu_{k}=1-\sum_{i\geq 0}p_i+\sum_{i=0}^{\infty} n_{i} {p_{i}}=1
 \]
 by \eqref{eq:sum}, so that  $\mu$ is critical.
 
 Now let us check that \eqref{eq:delta0} holds.  It is enough to show that for every $k \geq 1$ we have
 \begin{equation}
 \label{eq:toshowdelta}\P(\Delta_{n_{k}^*}< n_{k} ) \leq \frac{1}{n_{k}}.
 \end{equation}
 Indeed, by \ref{(3)} we have $n_{k}^*/f(n_{k}^*) \leq n_{k}$, so if \eqref{eq:toshowdelta} holds then 
\[
\P\left(\Delta_{n_{k}^*}< \frac{n_{k}^*}{f(n_{k}^*)} \right)\leq  \P(\Delta_{n_{k}^*}<  n_{k} ) \leq \frac{1}{n_{k}}  \quad  \longrightarrow \quad 0.
\]

We will use Lemmas \ref{lem:large_degree_is_large} and \ref{lem:localLDP} to  establish \eqref{eq:toshowdelta}.  Recall that by definition of $n_{k-1}$, in the notation of Lemma \ref{lem:localLDP}, by taking  $K=n_{k-1}$ and $\nu_i$ for $0 \leq i\leq K$, \eqref{eq:exp_tails_multinom} holds for $m=n_{k}^{*}$ and $\lambda=\lambda_{k}$ and $\delta=\delta_{k-1}$ since $\nu_0+\dots+\nu_K=1-\varepsilon_{k-1}$ and $\sum_{i=1}^K i\nu_i+\varepsilon_{k-1}=1-\delta_{k-1}<1$. 
Still in the notation of the latter lemma, observe that for $X^{<n_k-1}$ the random variable with law $\P(X^{<n_k-1}=k)=\mu^{\{0,\dots, n_k-1\}}_{i+1}$ (i.e. $X^{<n_k-1}$ has the law of $X$ conditional on $X<n_k-1$), it holds that 
\[
X^{<n_{k}-1}\overset{d}{=}X^{(c)}
\]
with $c=1-\sum_{i\geq 0} p_i$. We now apply  the bound \eqref{eq:exp_tails_multinom} from Lemma~\ref{lem:localLDP} with $m=n_{k}^*$ and $r,s$ defined by
\[
(\delta_{k-1}-r)n_{k}^*=1 \qquad \textrm{and}  \qquad (\delta_{k-1}-s)n_{k}^*=n_{k}
\]
by checking that  $0\leq s <\delta_{k-1}$ and that $\max\{s,\delta_{k-1}/2\}<  r\le \delta_{k-1} $.
Indeed, the first inequalities are plainly satisfied since  $n_k\leq \delta_{k-1} n^*_k$. Also $s \leq r<\delta_{k-1}$ is trivial. Finally, the inequality $\delta_{k-1}/2 \leq r$ comes from the fact that
\[
\left(\delta_{k-1}- \frac{\delta_{k-1}}{2} \right)n_{k}^*=  \frac{\delta_{k-1}}{2} n_{k}^*>1
\]
since $\delta_{k-1} n^*_k\ge n_k>2$.
 
Thus
 \[\frac{ \P(X^{<n_k-1}_1+\dots+X^{<n_k-1}_n=-1)}{\P( X^{<n_k-1}_2+\dots+X^{<n_k-1}_n=-n_k)}\leq \lambda^{n_{k}-1}.\]
 It follows from  Lemma \ref{lem:large_degree_is_large} that 
 \[\P(\Delta_{n_{k}^*}< n_{k}) {\leq \frac{\lambda^{n_{k}-1} }{{n_{k}^*} p_{k}} \leq \frac{ \lambda^{n_{k}-1} }{n_{k} p_{k}} \leq} \frac{ \lambda^{n_{k}-1} }{\delta/2}\leq \frac{1}{n_{k}},\]
 where we have used the definition of $p_{k}$ and where the {last} inequality follows from  \ref{(2)}. This entails \eqref{eq:toshowdelta} and completes the proof. 
\end{proof}

\subsection{Uniform local estimates}
We finish with the proof of Lemma \ref{lem:localLDP},  for which we need the following uniform local central limit theorem. 
\begin{lem}\label{lem:local}
Fix constants $\alpha, \beta >0$ and $0<\gamma<1/2$. Then, there exist constants $C=C(\alpha, \beta, \gamma)$ and $N=N(\alpha, \beta, \gamma)$ so that for all random variables $X$ on  $\Z$ that satisfy that 
\begin{enumerate}
\item $\P(X=-1)>\gamma$ and $\P(X=0)>\gamma$;
\item $\E{X^2}<\alpha$ and 
\item $\E{|X|^3}<\beta $,
\end{enumerate}  
for all $n>N$, for $S_n=\sum_{i=1}^n X_i$, $\mu=\E{X}$ and $\sigma^2=\E{(X-\mu)^2}$, for $\phi(t)=e^{-t^2/2}$ the  standard normal density, 
\[\sup_{k\in \Z} \left|\sqrt{n}\P\left( S_n=k\right)-\phi\left( \frac{k-n\mu}{\sigma\sqrt{n}}\right) \right|\leq \frac{C}{\sqrt{n}} .\]
\end{lem}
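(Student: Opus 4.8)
The plan is to prove this as a quantitative (Berry–Esseen-strength) local CLT, uniform over the class of laws satisfying hypotheses (1)–(3). The starting point is the classical inversion formula: for an integer-valued random variable, $\P(S_n=k)=\frac{1}{2\pi}\int_{-\pi}^{\pi} e^{-itk}\varphi_X(t)^n\,dt$, where $\varphi_X(t)=\E{e^{itX}}$, while $\frac{1}{\sigma\sqrt n}\phi\big(\tfrac{k-n\mu}{\sigma\sqrt n}\big)=\frac{1}{2\pi}\int_{-\infty}^{\infty} e^{-itk}e^{it n\mu}e^{-\sigma^2 n t^2/2}\,dt$. Subtracting, one must bound
\[
\Big|\sqrt n\,\P(S_n=k)-\phi\big(\tfrac{k-n\mu}{\sigma\sqrt n}\big)\Big|
\le \frac{\sqrt n}{2\pi}\int_{-\pi}^{\pi}\big|\varphi_X(t)^n-e^{itn\mu-\sigma^2nt^2/2}\big|\,dt
+\frac{\sqrt n}{2\pi}\int_{|t|>\pi}e^{-\sigma^2nt^2/2}\,dt ,
\]
and the second term is superpolynomially small once $\sigma^2$ is bounded below (which it is, uniformly, by (1): $\P(X=-1),\P(X=0)>\gamma$ forces $\V{X}\ge c(\gamma)>0$). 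The real work is the first integral, split into the usual three regions.

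**Key steps, in order.** First, record the uniform constants extractable from the hypotheses: $\mu=\E X\in[-\sqrt\alpha,\sqrt\alpha]$, $\sigma^2\in[c(\gamma),\alpha]$, and $\E{|X-\mu|^3}\le 8\beta+ \ldots$ bounded by a constant $\beta'=\beta'(\alpha,\beta)$. Second, the central region $|t|\le \delta/\sqrt n$ (for a small absolute $\delta$ depending only on $\beta'$, $c(\gamma)$): here a third-order Taylor expansion of $\log\varphi_X$ with remainder controlled by $\E{|X|^3}$ gives $|\varphi_X(t)^n-e^{itn\mu-\sigma^2nt^2/2}|\le C(\alpha,\beta,\gamma)\,\sqrt n\,|t|^3 e^{-\sigma^2 n t^2/4}$, and integrating yields a contribution $O(1/\sqrt n)$ after multiplying by $\sqrt n/2\pi$ — this is exactly where the $C/\sqrt n$ rate comes from. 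Third, the intermediate region $\delta/\sqrt n\le |t|\le \eta$ for a small absolute $\eta$: here $|\varphi_X(t)|\le e^{-\sigma^2 t^2/3}\le e^{-c(\gamma)t^2/3}$, so $|\varphi_X(t)|^n\le e^{-c n t^2/3}$ and the Gaussian comparison term is even smaller; the combined integral is bounded by $e^{-c'\delta^2}$-type quantities and is superpolynomially small in $n$. Fourth — and this is the crux — the tail region $\eta\le |t|\le \pi$: one needs a uniform bound $\sup_{\eta\le|t|\le\pi}|\varphi_X(t)|\le 1-\kappa$ for some $\kappa=\kappa(\gamma,\eta)>0$, valid for every $X$ in the class. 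This follows from (1): writing $1-|\varphi_X(t)|^2=\tfrac12\sum_{j,k}\P(X=j)\P(X=k)(1-\cos((j-k)t))$, the two terms $j=0,k=-1$ alone give $1-|\varphi_X(t)|^2\ge \gamma^2(1-\cos t)\ge 2\gamma^2\eta^2/\pi^2$ for $\eta\le|t|\le\pi$ (using $\P(X=0),\P(X=-1)>\gamma$ and that $X$ is supported on $\Z$, so $j-k$ is an integer and $1-\cos((j-k)t)\ge 0$ always, with the $(0,-1)$ pair contributing $1-\cos t$). Hence $|\varphi_X(t)|^n\le (1-\kappa)^n$ there, and this region contributes $O(\sqrt n (1-\kappa)^n)=o(1/\sqrt n)$. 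Assembling the four pieces and choosing $N$ large enough that all the exponentially small terms are $\le C/\sqrt n$ gives the claim.

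**Main obstacle.** The genuinely delicate point is the tail bound in step four: one must get $|\varphi_X(t)|\le 1-\kappa$ on $\{\eta\le|t|\le\pi\}$ \emph{uniformly} in the law of $X$, and crucially $\kappa$ must not degrade as the support of $X$ grows (the laws $X^{(c)}$ in Lemma~\ref{lem:localLDP} have support $\{-1,\dots,K-1\}$ with $K$ arbitrarily large). This is precisely why hypothesis (1) is stated with two atoms at \emph{adjacent} integers $-1$ and $0$: the gcd of the differences of support points divides $1$, so $X$ is aperiodic on $\Z$, and moreover the single pair $(0,-1)$ already forces $|\varphi_X(t)|^2 \le 1-\gamma^2(1-\cos t)$, a bound independent of everything else about $X$. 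One should state this sub-lemma (uniform aperiodicity bound) explicitly and prove it first, since everything else is a routine, if slightly tedious, uniform rerun of the standard Esseen argument; the bookkeeping to ensure all implied constants depend only on $(\alpha,\beta,\gamma)$ is the only remaining care needed.
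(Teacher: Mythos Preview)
Your proposal is correct and follows essentially the same approach as the paper: Fourier inversion, the standard region split, with the central region controlled by a third-moment expansion (the paper invokes Petrov's Lemma~V.1 for this) and the tail region $\{\eta\le|t|\le\pi\}$ handled by a uniform bound on $|\varphi_X(t)|$ that depends only on hypothesis~(1). The one cosmetic difference is in that tail bound: you use the symmetrization identity for $1-|\varphi_X|^2$ and extract the $(0,-1)$ pair, whereas the paper writes $\varphi_X(t)=\gamma+\gamma e^{-it}+(\text{rest})$ and applies the triangle inequality directly to get $|\varphi_X(t)|\le 2\gamma\cos(t/2)+1-2\gamma$.
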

\begin{proof}
We will follow the proof of the local limit theorem for random variables with a finite third moment found as Theorem VII.2 in Petrov's book \cite{Petrov}, making the bounds explicit to show uniformity across the family of random variables that we consider. Fix $\alpha, \beta>0$ and $0<\gamma<1$ and let $X$ be a random variable satisfying the conditions above.
First, for $g(t)=\E{e^{itX}}$ the characteristic function of $X$, Fourier inversion yields that, for any $k$ and $x$
\[
\P\left( S_n=k\right)=\frac{1}{2\pi}\int_{-\pi}^\pi \mathrm{d}t e^{-itk}g(t)^n \qquad \text{and} \qquad e^{-x^2/2}=\frac{1}{2\pi} \int_{-\infty}^\infty \mathrm{d}t e^{-itx-t^2/2},
\]
so that, if we set $x=x_{k,n}= \frac{k-n\mu}{\sigma\sqrt{n}}$ and \[h_n(t)=\E{\exp\left(\frac{it(S_n-\mu n)}{\sigma \sqrt{n}}\right)}=\exp(-it\mu n/(\sigma \sqrt{n}))g(t/(\sigma \sqrt{n}))^n,\]
we see that 
\begin{align}
	&\left|\sqrt{n}\P\left( S_n=k\right)-\phi\left( \frac{k-n\mu}{\sigma\sqrt{n}}\right) \right|\nonumber\\
	&=\frac{1}{2\pi}\left| \int_{-\pi\sigma \sqrt{n }}^{\pi\sigma \sqrt{n }} \mathrm{d}t e^{-itx }h_{n}(t)  -\int_{-\infty}^\infty \mathrm{d}t e^{-itx-t^2/2}\right| \nonumber\\
	&\leq \int_{|t|<\sqrt{n}/4} \mathrm{d}t | h_{n}(t)-e^{-t^2/2}| +  \int_{|t|>\sqrt{n}/4} \mathrm{d}t e^{-t^2} + \int_{\sqrt{n}/4<|t|<\pi\sigma \sqrt{n }} \mathrm{d}t |h_{n}(t)| .\label{eq:OoO}
\end{align}
By \cite[Lemma V.1]{Petrov}, for $t\leq \frac{\sqrt{n}}{4} \frac{\sigma^{3}}{\E{|X-\mu|^3}}$ it holds that 
 \[| h_{n}(t)-e^{-t^2/2}| < n^{-1/2} \frac{16\E{|X-\mu|^3} }{\sigma^3} |t|^3 e^{-t^2/3}< n^{-1/2} c |t|^3 e^{-t^2/3}   \] 
 for some $c=c(\alpha, \beta, \gamma) $. The above inequality in particular holds for $t\leq \frac{\sqrt{n}}{4}$ because on probability spaces the $L_p$ norm is increasing in $p$ so ${\sigma^{3}}\le \E{|X-\mu|^3}$.

Since $|t|^3 e^{-t^2/3}$ is integrable, this implies that the first integral on the right-hand side of \eqref{eq:OoO} is $O(n^{-1/2})$ uniformly in all $X$ that satisfy our conditions. Also the second integral is $O(n^{-1/2})$ and has no dependence on the law of $X$. We now focus our attention on the final integral. 
 We see that  
 \[\int_{\sqrt{n}/4<|t|<\pi\sigma \sqrt{n }} \mathrm{d}t |h_{n}(t)|= \sqrt{n}\int_{1/(4\sigma)<|t|<\pi} \mathrm{d}t |g(t)|^n \]
 and we claim that $|g(t)|$ is bounded away from $1$ uniformly in $X$ and $1/(4\sigma)<|t|<\pi$. First observe that under our conditions, there is an $\lambda=\lambda(\alpha,\beta,\gamma)$ with $0<\lambda<\pi/2$ so that $1/(4\sigma)>\lambda$. 
 Then, we see that by $\P(X=0)>\gamma$ and  $\P(X=-1)>\gamma$, $g(t)$ can be written in the form  $g(t)=\gamma+\gamma e^{-it}+ \sum_{k \ge -1} a_{k }e^{itk}$ with $(a_{k})_{k\ge -1}$ a sequence of nonnegative real numbers summing up to $1-2\gamma$. By the triangular inequality, $|g(t)| \leq  \gamma|1+e^{- it}|+1-2\gamma= 2\gamma\cos(t/2)+1-2\gamma$. Thus  for $\lambda<|t|<\pi$ we have
 \begin{align*} |g(t)| \le 1-2\gamma(1-\cos(\lambda/2))=:1-\varepsilon.\end{align*}
We conclude that, 
 \[\sqrt{n}\int_{1/(4\sigma)<|t|<\pi} \mathrm{d}t |g(t)|^n\leq 2\pi \sqrt{n}  (1-\varepsilon)^n\leq n^{-1/2}\]
 for all $n$ large enough, uniformly in $X$. This proves the statement.
\end{proof}

\begin{proof}[Proof of Lemma \ref{lem:localLDP}]
We will prove a local large deviations result that works uniformly across the distributions defined by $c\in [0,\eps]$. For $s\in [0,\delta]$ we will use the Cramèr transform to apply a change of measure to $X^{(c)}$ so that the resulting random variable has mean $-\delta+s$ and then we apply the uniform local limit theorem we proved as Lemma \ref{lem:local} to this skewed random variable. 

Notice that for $c\in [0,\varepsilon]$, the mean of $X^{(c)}$ is  $- \frac{\delta}{1-\varepsilon+c}$, which is increasing in $c$. In particular, for any $c\in [0,\varepsilon]$, the mean of $X^{(c)}$ is bounded from above by the mean of $X^{(\varepsilon)}$, which equals $-\delta$ by our assumptions. 

Let $G(t)=G_0(t)=\frac{1}{1-\varepsilon}\sum_{i=0}^K t^{k-1} \nu_k$ be the probability generating function of $X^{(0)}$, and note that $G$ has an infinite radius of convergence since $X^{(0)}$ is bounded. Let
\[
G_c(t)=\frac{(1-\varepsilon)G(t)+{c}}{1-\varepsilon+c}
\]
be the probability generating function of $X^{(c)}$.

Observe that for every $s\in[0,\delta]$ and $c\in [0,\varepsilon]$, {the function ${bG'_c(b)}/{G_c(b)}$ is increasing in $b\in[1,\infty)$.  In addition, 
${G'_c(1)}/{G_c(1)}=\E{X^{(c)}}$ and $\lim_{b\to\infty}{bG'_c(b)}/{G_c(b)}=\max\{i:\nu_i>0\}-1>-\delta+s$.  This implies that} the equation ${bG'_c(b)}/{G_c(b)}=-\delta+s$ has a unique solution on $[1,\infty)$ which we denote by $b_{c,s}$. 
{Then, since ${bG'_c(b)}/{G_c(b)}$ is increasing in $b$, $b_{c,s}$ is increasing in $s$ and since ${bG'_c(b)}/{G_c(b)}=\frac{(1-\varepsilon)G'(b)}{(1-\varepsilon)G(b)+c}$ we see that it is decreasing in $c$ so that  $b_{c,s}$ is also increasing in $c$.} {Finally, since $1\leq b_{c,s}\leq b_{\varepsilon,\delta}<\infty$, we see that there exists a constant $C<\infty$ so that  $G_c(b_{c,s})<C$ for all $c\in [0,\varepsilon], s\in [0,\delta].$}

Now we let $\hat{X}^{(c,s)}$ be the random variable with probability generating function ${G_c(t b_{c,s})}/{G_c(b_{c,s})}$ so that by definition of $b_{c,s}$, the random variable $\hat{X}^{(c,s)}$ has mean $-\delta+s$. 
Set $S^{(c)}_n=\sum_{i=1}^n X^{(c)}_i$. Also let $\hat{X}^{(c,s)}_1, \hat{X}^{(c,s)}_2,\dots$ be i.i.d.\ copies of $\hat{X}^{(c,s)}$ and set $S^{(c,s)}_n:=\sum_{i=1}^n \hat{X}^{(c,s)}_i$. Then, we get for any $k\in \Z$ that 
\[ \P(S^{(c)}_n=k)= \frac{G_c(b_{c,s})^n }{ (b_{c,s})^k}\P(S^{(c,s)}_n=k).\]
Therefore, 
\begin{equation}\label{eq:ratio_local_LDP} \frac{\P( X^{(c)}_1+\dots+X^{(c)}_{n}= -(\delta-r)n )}{
 \P(X^{(c)}_1+\dots+X^{(c)}_{n-1}=-(\delta-s)n)}=\frac{G_c( b_{c,r} )^n }{ ( b_{c,r} )^{ -(\delta-r)n }} \frac{ (b_{c,s})^{ -(\delta-s)n}}{ G_c(b_{c,s})^{n-1}} \frac{\P(S^{(c,r)}_n=-(\delta-r)n) }{ \P(S^{(c,s)}_{n-1}=-(\delta-s)n)}.\end{equation}
 
We claim that there exists a $\lambda<1$ so that 
\begin{equation}
\label{eq:claim}
\frac{G_c( b_{c,r} ) }{ ( b_{c,r} )^{ -\delta+r }} <\lambda^{r-s} \frac{ G_c(b_{c,s})}{ (b_{c,s})^{ -\delta+s}}
\end{equation}
for all $s\geq 0$ and for all $r$ with $\max\{s,\delta/2\}<  r\le \delta $ and for all $c\in [0,\varepsilon]$. 
To see this, note that 
\[ \frac{d}{dr} \ln\left(\frac{G_c( b_{c,r} ) }{ ( b_{c,r} )^{-\delta+r } }\right)=-\ln( b_{c,r} ),\]
so since $ b_{c,r} \ge1$ for all $c$ and $r$ that we consider, 
\begin{align*}
\ln\left(\frac{G_c(b_{c,s}) }{ (b_{c,s})^{-\delta+s }}\right)-\ln\left(\frac{G_c( b_{c,r} ) }{ ( b_{c,r} )^{-\delta+r }}\right)&=\int_s^r \mathrm{d}t \ln( b_{c,t} )\\
&>\int_{(r+s)/2}^r \mathrm{d}t \ln( b_{c,t} )\\
&>\frac{r-s}{2}\ln(b_{c,(r+s)/2})\\
&>\frac{r-s}{2}\ln(b_{\varepsilon,\delta/4})
\end{align*}
since $(r+s)/2>\delta/4$  by our assumptions and since $b_{c,s}$ is increasing in $c$ and $s$. This implies the claim \eqref{eq:claim} with $\lambda=(b_{\varepsilon,\delta/4})^{-1/2}<1$.

Plugging this into \eqref{eq:ratio_local_LDP} yields that
\begin{align*}\frac{\P( X^{(c)}_1+\dots+X^{(c)}_{n}= -(\delta-r)n )}{
 \P(X^{(c)}_1+\dots+X^{(c)}_{n-1}=-(\delta-s)n)}\leq &G_c(b_{c,s}) \lambda^{(r-s)n} \frac{\P(S^{(c,r)}_n=-(\delta-r)n) }{ \P(S^{(c,s)}_{n-1}=-(\delta-s)n)}\\
 \leq&C \lambda^{(r-s)n} \frac{\P(S^{(c,r)}_n=-(\delta-r)n) }{ \P(S^{(c,s)}_{n-1}=-(\delta-s)n)}
 \end{align*}
We will control the ratio on the right-hand side using Lemma \ref{lem:local}. 

We claim that we can apply Lemma \ref{lem:local} to the family of random variables $\{\hat{X}^{(c,s)}:c\in [0,\varepsilon], s\in [0,\delta]$. Indeed, since $\hat{X}^{(c,s)}$ has support in $\{-1,0,\dots, K-1\}$, we have $\E{(\hat{X}^{(c,s)})^2}<K^2$ and $\E{|\hat{X}^{(c,s)}|^3}<K^3$, so we can set $\alpha=K^2$ and $\beta=K^3$ in the conditions of Lemma \ref{lem:local}. Then, note that
\[\P(\hat{X}^{(c,s)}=-1)=\frac{\P(X^{(c)}=-1)}{b_{c,s}G_c(b_{c,s})}=\frac{\nu_0}{ (1-\varepsilon+c)b_{c,s}G_c(b_{c,s})}\geq  \frac{\nu_0}{ b_{\varepsilon,\delta} C}=:\gamma_0>0.\]
 Also,
\[\P(\hat{X}^{(c,s)}=0)=\frac{\P(X^{(c)}=0)}{G_c(b_{c,s})}=\frac{\nu_1+c}{ (1-\varepsilon+c)G_c( b_{c,s} )  }\geq  \frac{\nu_1}{C}=:\gamma_1>0.\]
Therefore we may set $\gamma=\min\{\gamma_0,\gamma_1\}$ in the conditions of Lemma \ref{lem:local}. Let $(\hat{X}^{(c,s)}_i,i \ge 1)$ be i.i.d.\ copies of $\hat{X}^{(c,s)}$ and set $S^{(c,s)}_n:=\sum_{i=1}^n \hat{X}^{(c,s)}_i$. Then the lemma yields constants $A>0$ and and $N_0>0$ so that for all $n\geq N_0$, for $\sigma^{(c,s)}$ the standard deviation of $\hat{X}^{(c,s)}$,
\[ \sup_{c\in [0,\varepsilon], s\in[0,\delta], k\in \Z} \left|\sqrt{n}\P\left( S^{(c,s)}_n=k\right)-\phi\left( \frac{k+n(\delta-s)}{\sigma^{(c,s)}\sqrt{n}}\right) \right|\leq \frac{A}{\sqrt{n}}.\]
Now observe that there exists $a>0$ so that $\sigma^{(c,s)}\in [1/a,a]$ for all $c\in [0,\varepsilon], s\in[0,\delta]$. Therefore, we see that there exist $b>0$ and $N_1>0$ so that for all $n\ge N_1$, $c\in [0,\varepsilon]$, and $s\in[0,\delta]$, for all $k\in \Z$ for which $|k+n(\delta-s)|\le 1 $, 
\[ \frac{1}{b\sqrt n} \leq \P\left( S^{(c,s)}_n=k\right)\leq \frac{b}{\sqrt{n}}.\]
Therefore, 
\begin{align*}
 \frac{\P(S^{(c,r)}_n=-(\delta-r)n) }{ \P(S^{(c,s)}_{n-1}=-(\delta-s)n)}\leq b^2\sqrt{n/(n-1)}\leq 2b^2
\end{align*}
for $n\geq 2$. This is also a constant, so, by possibly making $\lambda$ larger (but still less than $1$, and independently of $c$,$r$ and $s$), we see that there is $N>0$ such that for all $n \ge N$, for all $c\in [0,\varepsilon]$, for all $0\leq s<\delta$, for all $\max\{s,\delta/2\}<r\leq \delta$, 
\[
\frac{\P( X^{(c)}_1+\dots+X^{(c)}_{n}= -(\delta-r)n )}{
 \P(X^{(c)}_1+\dots+X^{(c)}_{n-1}=-(\delta-s)n)}\leq   \lambda^{(r-s)n} 
\]
as claimed.
\end{proof}

\section{\bf Future work}\label{sec:future}
We conclude by mentioning several natural directions for future research.
\begin{enumerate}
\item We are curious to know whether, in Theorem~\ref{thm:critical_short_trees0}, the $\limsup$s can be replaced by limits. Equivalently, we would like to know whether for any $f(n) \to \infty$, there exists a critical offspring distribution such that $\H(\rtree_n)=o_{\p}(f(n)\ln n)$ and 
$\mathsf{Width}(\rtree_n)=\omega_{\P}(n/f(n))$. 
\item What are the possible asymptotic behaviours of $\H(\rtree_n)$? For which functions $f(n)=\omega(1)$ can we find a critical offspring distribution for which $(\H(\rtree_n)/(f(n)\ln n),n \ge 1)$ is a tight sequence, or converges in probability to a constant?
\item The paper \cite{Add19} asked whether it is always the case that
$\H(\rtree_n)\cdot \mathsf{Width}(\rtree_n)=O_{\p}(n\ln n)$, and constructed examples where $\H(\rtree_n)\cdot \mathsf{Width}(\rtree_n)=\Theta_{\p}(n\ln n)$. Section~\ref{ssec:examples} provides some further examples where $\H(\rtree_n)\cdot \mathsf{Width}(\rtree_n)=\Theta_{\p}(n\ln n)$,  and in Corollary \ref{cor:hw} we show that answer to the above question is ``yes'' for the class of offspring distributions satisfying  \eqref{eq:hypmu}. We expect that the same holds for all critical offspring distributions, and suspect that it holds for all offspring distributions.
\item Given a slowly varying function $\Lambda(n)=\omega(\ln n)$, is it possible to find an offspring distribution satisfying \eqref{eq:hypmu} such that $\H(\rtree_{n})/\Lambda(n) \rightarrow 1$ in probability?
\end{enumerate}

\bibliographystyle{plainnat}

\end{document}